\newtheorem{theorem}{Theorem}
\newtheorem{corollary}{Corollary}
\newtheorem{lemma}{Lemma}
\newtheorem{definition}{Definition}
\let\mathcal\mathscr
\newcommand{\C}{\mathbb{C}}
\newcommand{\R}{\mathbb{R}}
\newcommand{\ov}{\overline}
\newcommand{\LL}{\mathcal{L}_1}
\newcommand{\Lb}{\overline{\mathcal{L}_1}}
\newcommand{\Lk}{\mathcal{L}_1(k)}
\newcommand{\Lbk}{\overline{\mathcal{L}_1}(k)}
\newcommand{\Lkb}{\mathcal{L}_1(\ov{k})}
\newcommand{\Lbkb}{\overline{\mathcal{L}_1}(\overline{k})}
\newcommand{\cc}{{\sf c}}
\newcommand{\cb}{\overline{\sf c}}
\newcommand{\eb}{\overline{\sf e}}
\newcommand{\dd}{{\sf d}}
\newcommand{\ee}{{\sf e}}
\newcommand{\db}{\overline{\sf d}}
\newcommand{\bb}{{\sf b}}
\newcommand{\bbb}{\overline{\sf b}}
\newcommand{\T}{\mathcal{T}}
\newcommand{\Tk}{\mathcal{T}(k)}
\newcommand{\Tkb}{\mathcal{T}(\ov{k})}
\newcommand{\KK}{\mathcal{K}}
\newcommand{\Kb}{\ov{\mathcal{K}}}
\newcommand{\ff}{\sf f}
\newcommand{\fb}{\ov{\sf f}}
\newcommand{\smallbullet}{{\scriptscriptstyle{\bullet}}}
\begin{document}

\setcounter{page}{24}

\title[]{
Explicit absolute parallelism
\\
for $2$-nondegenerate real hypersurfaces
\\
$M^5 \subset \C^3$ of constant Levi rank $1$
}
\author{Samuel Pocchiola}

\address{Samuel Pocchiola ---  D\'epartement de math\'ematiques, b\^atiment 425, Facult\'e des sciences d'Orsay,
Universit\'e Paris-Sud, F-91405 Orsay Cedex, france}
\email{samuel.pocchiola@math.u-psud.fr}

\maketitle

\bigskip

\section*{abstract}
We study the local equivalence problem for five dimensional real
hypersurfaces $M^5$ of $\C^3$ which are $2$-nondegenerate and of
constant Levi rank $1$ under biholomorphisms.  We find two invariants,
$J$ and $W$, which are expressed explicitly in terms of the graphing
function $F$ of $M$, the annulation of which give a necessary and
sufficient condition for $M$ to be locally biholomorphic to a model
hypersurface, the tube over the light cone.  If one of the two
invariants $J$ or $W$ does not vanish on $M$, we show that the
equivalence problem under biholomophisms reduces to an equivalence
problem between $\{e \}$-structures, that is we construct an absolute
parallelism on $M$.

\section{Introduction}
A smooth $5$-dimensional real hypersurface $M \subset \C^3$ is locally
represented as the graph of a smooth function $F$ over the
$5$-dimensional real hyperplane $\C_{ z_1} \times \C_{ z_2} \times
\R_v$:
\[
u
=
F\big(z_1,z_2,\overline{z_1},\overline{z_2},v\big)
.\]
Such a hypersurface $M$ 
is said to be of CR-dimension $2$ if at each point $p$ of $M$, 
the vector space  
\begin{equation*}
T^{1,0}_pM := \left. \C \otimes T_pM  \right. \cap T^{1,0}_p \C
\end{equation*} 
is of complex dimension $2$ (for background, see \cite{MPS, Boggess-1991, BER-1999}). 

We recall that the Levi form $LF$ of $M$ at a point $p$ is the
skew-symmetric hermitian form defined on $T^{1,0}_pM$ by
\begin{equation*}
LF(X,Y) = i \, \big[ \widetilde{X}, \ov{\widetilde{Y}} \big]_p \mod T^{1,0}_pM \oplus
T^{0,1}_pM
,\end{equation*}
where $\widetilde{X}$ and $\widetilde{Y}$ are two local sections $M
\longrightarrow T^{1,0}M$ such that $\widetilde{X}_p=X$ and $\widetilde{Y}_p =
Y$. 

The aim of this paper is to study the equivalence problem under
biholomorphisms of the hypersurfaces $M \subset \C^3$ which are of
CR-dimension $2$, and whose Levi form is degenerate and of constant
rank $1$. For well-known natural reasons, we will also assume that the
hypersurfaces we consider are $2$-nondegenerate, i.e. that their Freeman forms
are non-zero (see for example \cite{MPS}, p.~91). Two other approaches on this problem 
have been recently provided
by Isaev-Zaitsev and Medori-Spiro (\cite{Medori-Spiro, Isaev-Zaitsev}). 
We refer to \cite{EMS} for an historical perspective on equivalence problems for hypersurfaces of complex spaces. 

We start by exhibiting two vector fields $\LL$ and $\mathcal{L}_2$
which constitute a basis of $T^{1,0}_pM$ at each point $p$ of
$M$. This provides an identification of $T^{1,0}_pM$ with $\C^2$ at
each point. We also exhibit a
real $1$-form $\sigma$ on $TM$ whose prolongation to $\C \otimes TM$ satisfies:
\begin{equation*}
\{ \sigma =0 \} = T^{1,0}M \oplus T^{0,1}M,
\end{equation*}
which provides an identication of the projection
\begin{equation*}
\C \otimes T_pM \longrightarrow \left. \C \otimes T_pM \right.\big/ \left( T^{1,0}_pM \oplus T^{0,1}_pM \right)
\end{equation*}
with the map $\sigma_p$: \, $\C \otimes T_pM \longrightarrow \C$.
With these two identifications, the Levi form $LF$ can be viewed at
each point $p$ as a skew hermitian form on $\C^2$ represented by the matrix:
\begin{equation*}
LF= 
\begin{pmatrix}
\sigma_p \left( i \, \big[ \LL , \Lb \big] \right)& \sigma_p \left( i \,\big[ \mathcal{L}_2, {\Lb} \big] \right)\\
\sigma_p \left( i \,\big[ \LL , \ov{\mathcal{L}_2} \big] \right) & \sigma_p \left( i \,\big[ \mathcal{L}_2, \ov{\mathcal{L}_2} \big] \right) \\
\end{pmatrix}
.\end{equation*}

The fact that $LF$ is supposed to be of constant rank $1$ ensures the
existence of a certain function $k$ such that the vector field 
\begin{equation*}
\KK := k \,
\LL + \mathcal{L}_2
\end{equation*}
lies in the kernel of $LF$. Our explicit
computation of $LF$ provides us with an explicit expression of $k$ in
terms of the graphing function $F$ for $M$. In fact, here are the expressions 
of $\LL$ and $\KK$:
\begin{align*}
\LL &= \partial_{z_1}  - i \, \frac{F_{z_1}}{1 + i \, F_v} \, \partial_v, \\
\KK &= k \, \partial_{z_1} +  \partial_{z_2}  - \frac{i}{1 + i \, F_v}  \left( k \, F_{z_1} + F_{z_2} \right) \partial_v,
\end{align*}
and also of $k$: 
{\tiny
\begin{equation*}
k = - \frac{F_{z_2, \ov{z_1}} + F_{z_2, \ov{z_1}} \,  F_{v}^2 - i \, F_{\ov{z_1}} \,  F_{z_2, v} - F_{\ov{z_1}} \,  F_{v} \, F_{v, z_2} + i \, F_{z_2} \,  F_{\ov{z_1}} \, 
F_{v,v} - F_{z_2} \,  F_v \,  F_{v, \ov{z_1}}}{
F_{z_1, \ov{z_1}} + F_{z_1, \ov{z_1}} \,  F_v^2 - i \, F_{\ov{z_1}} \,  F_{z_1, v} - F_{\ov{z_1}} \,  F_v \,  F_{ z_1, v} + i \, F_{z_1} \, F_{\ov{z_1}, v} 
+ F_{z_1} \, F_{\ov{z_1}} \,  F_{v, v} -F_{z_1} \, F_v  \, F_{v,\ov{z_1}}},
\end{equation*}}
\!\!and we want to emphasize that all our 
subsequent computations will be expressed in terms of Lie derivatives
of the function $k$ by the vector fields $\LL$, $\KK$, $\Lb$, $\Kb$, hence
in terms of $F$.

From our construction, the four vector fields $\LL$, $\KK$, $\Lb$,
$\Kb$ constitute a basis of $T^{1,0}_pM \oplus T^{0,1}_pM$ at each
point $p$ of $M$.  It turns out that the vector field $\T$ defined by:
\begin{equation*}
\T:= i \, \big[ \LL ,\Lb \big]
\end{equation*}
 is linearly independant from $\LL$,
$\KK$, $\Lb$, $\Kb$. With the five vector fields $\LL$, $\KK$, $\Lb$,
$\Kb$ and $\T$, we have thus exhibited a local section from $M$ into
$\C \otimes F(M)$, the complexification of the bundle $F(M)$ of frames of
$M$, which is geometrically adapted to $M$ in the following sense:
\begin{enumerate}
\item{the line bundle generated by $\KK$ is
 the kernel of the Levi form of $M$,}
\item{ $\LL$ and $\KK$  constitute a basis of $T^{1,0}M$,}
\item{$\T$ is defined by the formula  $\T:= i \, \big[ \LL ,\Lb \big]$.}
\end{enumerate}

Then we define the coframe of  $1$-forms:
\begin{equation*}
\left(\rho_{0}, \kappa_{0}, \zeta_{0}, \ov{\kappa}_{0}, \ov{\zeta}_0 \right)
\end{equation*}
which is the dual coframe of the frame: 
\begin{equation*}
\left(\LL, \KK, \Lb, \Kb, \T \right). 
\end{equation*}

The computation of the exterior derivatives of 
$\rho_{0}$, $\kappa_{0}$, $\zeta_{0}$, $\ov{\kappa}_{0}$, $\ov{\zeta}_0$, 
which constitute the so-called structure equations of the coframe, involves another important function on $M$, that we denote by $P$ in the sequel.
We give here the expression of $P$ in terms of the graphing function $F$ because, as with the function $k$, all our subsequent computations will involve
terms expressed as derivatives of $P$ by the fundamental vector fields $\LL$, $\KK$, $\Lb$, $\Kb$, $\T$, namely:
\begin{equation*}
P = \frac{ l_{z_1} + A^1 \, l_v - l \, A^1_v}{l},
\end{equation*}
where:
\begin{equation*}
A_1 = 2 \, \frac{F_{z_1}}{1 + i \, F_v}
,\end{equation*}
and where:
\begin{equation*}
l: = i \left(\ov{A^1_{z_1}} - A^1_{\ov{z_1}} + A^1 \ov{A^1_{v}} - \ov{A^1} A^1_{v} \right).
\end{equation*}

Then in terms of $P$ and $k$, the structure equations enjoyed by $\rho_{0}$, $\kappa_{0}$, $\zeta_{0}$, $\ov{\kappa}_{0}$, $\ov{\zeta}_0$, are the following:
\begin{equation*}
\begin{aligned}
d \rho_{0} & = P \left. \rho_0 \wedge \kappa_0 \right. - \Lk \left. \rho_0 \wedge \zeta_0 \right. + \ov{P} \left. \rho_0 \wedge \ov{\kappa_0} \right.
- \Lbkb  \left. \rho_0 \wedge \ov{\zeta_0} \right. + i \left.  \kappa_0 \wedge \ov{\kappa_0} \right., \\
d \kappa_{0} & = - \mathcal{T}(k) \, \left.  \rho_0 \wedge \zeta_0 \right. - \Lk \, \left. \kappa_0 \wedge \zeta_0 \right. 
+ \Lbk \, \left. \zeta_0 \wedge \ov{\kappa}_0 \right. 
, \\
d \zeta_0 & = 0, \\
d \ov{\kappa}_0 & = - \Tkb \, \left.  \rho_0 \wedge \ov{\zeta_0} \right.  
- \Lkb \, \left. \kappa_0 \wedge  \ov{\zeta}_0 \right. - \Lbkb \, \left. \ov{\kappa_0} \wedge \ov{\zeta}_0 \right. 
,\\
d \ov{ \zeta_{0}} & = 0.
\end{aligned}
\end{equation*}
The fact that $M$ is $2$-nondegenerate is expressed by the 
(biholomorphically invariant, see \cite{MPS}) assumption that: 
\begin{equation*}
\Lbk \, \, \, \text{\em vanishes nowhere on $M$};
\end{equation*}
notice here that $\Lbk$ appears as the coefficient of $\zeta_0 \wedge 
\ov{\kappa_0}$ in $d \kappa_0$.

The end of section \ref{section:setup} is devoted to reinterpret the equivalence problem under biholomophisms of such hypersurfaces as an equivalence
problem between $G$-structures. We recall that if
$G \subset GL(n,\R)$ is a Lie group, a $G$-structure on a manifold $M$ of dimension $n$ is a subbundle of 
the bundle $F(M)$ of frames of $M$, which is a principal $G$-bundle. The fact that we can express the equivalence problem in 
terms of equivalences between $G$-structures comes from the following observation: 
if $\phi$ is a local biholomorphism of $\C^3$  such that $\phi(M) = M$,
then the restriction $\phi_{M}$ of $\phi$ to  $M$ is a local smooth diffeomorphism of $M$ which satisfies
the additional two conditions:
\begin{enumerate}
\item{$\phi_{M}$ stabilizes the bundle $T^{1,0}(M)$;}
\item{$\phi_{M}$ stabilizes the kernel of the Levi form of $M$.}
\end{enumerate}
As a result, there are three functions $\ff$, $\cc$ and $\ee$ on $M$ such that :
\begin{equation*}
\phi_{M*}(\KK) = \ff \, \KK,
\end{equation*}
and
\begin{equation*}
\phi_{M*}(\LL) = \cc \, \LL + \ee \, \KK.
\end{equation*}
Of course, as $\phi_{M}$ is a real diffeomorphism, we shall also have :
\begin{equation*}
\phi_{M*}(\Kb) = \ov{\phi_{M*}(\KK)} =  \fb \, \Kb,
\end{equation*}
and
\begin{equation*}
\phi_{M*}(\Lb) = \ov{\phi_{M*}(\LL)} = \cb \, \Lb + \eb \, \Kb.
\end{equation*}
It is then easy to show that the matrix Lie group which encodes suitably the problem is the $10$ dimensional Lie group $G_1$ given by the matrices of the form:
\begin{equation*}
g := \begin{pmatrix}
{\sf c}\overline{\sf c} & 0 & 0 & 0 & 0 \\
{\sf b} & {\sf c} & 0 & 0 & 0 \\
{\sf d} & {\sf e} & {\sf f} & 0 & 0 \\
\overline{\sf b} & 0 & 0 & \overline{\sf c} & 0 \\
\overline{\sf d} & 0 & 0 & \overline{\sf e} & \overline{\sf f}
\end{pmatrix},
\end{equation*}
where $\cc$ and $\ff$ are non-zero complex numbers, while $\bb$, $\dd$, $\ee$ are arbitrary complex numbers.

The rest of our article is devoted to the implementation of Cartan's
equivalence method to reduce this $G_1$-equivalence problem to an
absolute parallelism.  We use \cite{Olver-1995} and \cite{Sternberg}
as standard references on Cartan's equivalence method.  We develope
the parametric version of Cartan's equivalence method, that is we
perform all the computations and give explicit expressions of the
functions involved in the normalizations of the group parameters,
because we need to control carefully the expressions of these
functions: some of them might indeed vanish identically on $M$, which
is of crucial importance when deciding whether a potential
normalization might be allowed or not. Our computations involves only
terms which are derivatives of the functions $k$ and $P$ by the
fundamental vector fields $\LL$, $\KK$, $\Lb$, $\Kb$, $\T$, and they
become ramified by the fact that some relations exists between these
derivatives: those that follow simply from the Jacobi identities, and
those that follow from the fact that the Levi form of $M$ is of rank
$1$ everywhere. We give a sum up of the relations that we use at the
end of subsection \ref{subsection:lie}.  These relations imply
important simplifications in the formulae we obtain for the torsion
coefficients, and shall not be missed if one keeps in mind that we
usually want to control whether these coefficients do vanish or not on
$M$, which is a delicate task, even with the help of a computer
algebra system.

We find in section \ref{section:step1} that the first normalization of the group parameters is: 
\begin{equation*}
{\sf f} = \frac{\cc}{\cb} \, \overline{ \mathcal{L}_1}(k).
\end{equation*}
This enables us
to reduce $G_1$ to a new matrix Lie group $G_2$, which is $8$-dimensional and whose elements $g$ take the form:
\begin{equation*}
g = \begin{pmatrix}
{\sf c} \overline{\sf c} & 0 &0 &0 &0 \\
{\sf b} & {\sf c} & 0 & 0 & 0 \\
{\sf d} & {\sf e} & \frac{\sf c}{\overline{\sf c}} & 0 & 0 \\
{\sf \overline{b}} & 0 & 0 & {\sf \overline{c}} & 0 \\
0 & 0 & {\sf \overline{d}} & {\sf \overline{e}} & \frac{\sf c}{\sf \overline{c}}
\end{pmatrix}.
\end{equation*}
We then 
perform a second loop in Cartan's equivalence method in section \ref{section:step2}, which yields the normalization:
\begin{equation*}
\bb = - i \, \cb \ee + i \, \frac{\cc}{3} \left( \frac{\ov{\mathcal{L}_1}\left( \Lbk \right)}{\Lbk} - \ov{P} \right)
,\end{equation*} 
and which therefore leads to a $G_3$-equivalence problem,
where $G_3$ is the $6$-dimensional matrix Lie group whose elements are of the form: 
\begin{equation*}
g = \begin{pmatrix}
\cc \cb & 0 & 0 & 0 & 0 \\
- i\, \ee \cb & \cc & 0 & 0 &0 \\
\dd & \ee & \frac{\cc}{\cb} & 0 & 0 \\
i\, \eb \cc & 0 & 0 & \cb & 0 \\
\db & 0 & 0 & \eb & \frac{\cb}{\cc} 
\end{pmatrix}
.
\end{equation*}
The third loop is done in section \ref{section:step3} and it gives us a normalization of the parameter $\dd$ as:
\begin{multline*}
\dd = - i\, \frac{1}{2} \,  \frac{\ee^2 \cb}{ \cc} + i \, \frac{2}{9} \, \frac{\cc}{\cb} \, \frac{\Lb \left( \Lbk \right)^2}{\Lbk^2} + i \,
\frac{1}{18} \,   \frac{\cc}{\cb} \, \frac{\Lb \left(\Lbk \right) \ov{P}}{\Lbk} \\ - i \, \frac{1}{9} \, \frac{\cc}{\cb} \, \ov{P}^2  +  i \, \frac{1}{6} \, 
\frac{\cc}{\cb} \, \Lb \left( \ov{P} \right) - i \,  \frac{1}{6} \, \frac{\cc}{\cb} \, \frac{\Lb \left( \Lb \left( \Lbk \right) \right)}{\Lbk}.
\end{multline*}
This therefore reduces $G_3$ to the $4$-dimensional group $G_4$, whose elements are of the form:
\begin{equation*}
g= \begin{pmatrix}
\cc \cb & 0 & 0 & 0 & 0 \\
-i \, \ee \cb & \cc & 0 & 0 & 0 \\
- \frac{i}{2} \, \frac{\ee^2 \cb}{\cc}  & \ee & \frac{\cc}{\cb} & 0 & 0 \\
i \,  \eb \cc & 0 & 0 &  \cb & 0  \\
 \frac{i}{2} \, \frac{\eb^2 \cc}{\cb}  &0 & 0 &  \eb & \frac{\cb}{\cc}  \\
\end{pmatrix}
.\end{equation*}

The fourth loop of Cartan's method, which is done in section
\ref{section:step4},
leads to a more advanced analysis than the three previous ones. The
normalizations of the group parameters that are suggested at this 
stage depend on the
vanishing or the non-vanishing of two functions, $J$ and $W$, which 
appear to be two fundamental invariants of the problem. The
expressions of $J$ and $W$ are given below:
\begin{dmath*}
J = \frac{5}{18} \, \frac{\LL \left( \Lkb \right)^2}{\Lkb^2} \, P + \frac{1}{3} \, P \, \LL \left( P \right) -
\frac{1}{9} \,  \frac{\LL \left( \Lkb \right)}{\Lkb} \, P^2 \\ +  \frac{20}{27} \, \frac{ \LL \left( \Lkb \right)^3}{\Lkb^3} 
 - \frac{5}{6} 
\, \frac{ \LL \left( \Lkb \right) \, \LL \left( \LL \left( \Lkb \right) \right)}{\Lkb^2} \\  + \frac{1}{6} \, \frac{\LL \left( \Lkb \right) \, \LL(P)}{\Lkb}
- \frac{1}{6} \, \frac{\LL \left( \LL \left( \Lkb \right) \right)}{\Lkb} \, P  \\
- \frac{2}{27} \, P^3 - 
\frac{1}{6} \, \LL \left( \LL \left( P \right) \right)  + \frac{1}{6} \,  \frac{\LL \left( \LL \left( \LL \left( \Lkb \right) \right) \right)}{\Lkb} 
,\end{dmath*}
and
\begin{multline*}
W:=  \frac{2}{3} \, 
 \frac{\LL \left( \Lbk \right)}{\Lbk} + \frac{2}{3} \, \frac{\LL 
\left( \Lkb \right)}{\Lkb} \\
+ \frac{1}{3} \frac{\Lb \left( \Lbk \right) \KK \left( \Lbk \right)}{\Lbk^3}  -
\frac{1}{3} \frac{ \KK \left( \Lb \left( \Lbk \right) \right)}{\Lbk^2} + \frac{i}{3} \, \frac{\Tk}{\Lbk}
.
\end{multline*}
We thus observe a branching phenomenon at that point: if $J$ and $W$ are both identically vanishing on $M$, 
then no further reductions of the group parameters are allowed 
by Cartan's method. However, if $J$ is non-vanishing we can normalize the parameter $\cc$ by
\begin{equation*}
\cc = J^{\frac{1}{3}},
\end{equation*} 
whereas if $W$ is non vanishing we can perform the normalization 
\begin{equation*}
\cc = W.
\end{equation*}

We notice here that we are not treating the cases where one of the two
invariants $J$ or $W$ might vanish somewhere on $M$ without beeing
identically vanishing on $M$, that is we are making a genericity
assumption $M$, which is a standard process when using Cartan's
technique. To be fully precise, we also suppose in section
\ref{section:W} that the function $\Kb(W)$ is generic on $M$, that is
it is either identically $0$ or non-vanishing on $M$, in order to
establish the results of this section.  This motivates the following
definition:
\begin{definition} \label{definition:generic}
A five dimensional CR-submanifold of $\C^3$ of CR-dimension $2$ which is $2$-non degenerate, and whose Levi form is of constant rank $1$ is said to be generic
if the functions $J$, $W$ and  $\Kb(W)$ are either $0$ or non-vanishing on $M$.
\end{definition}

Section \ref{section:J} is devoted to show that in the case $J \neq
0$, one can normalize the last group parameter $\ee$, thus reducing
the equivalence problem to the study of a five dimensional $\{e
\}$-structure. Section \ref{section:W} deals with the same issue in
the case $W \neq 0$.  To this end, we show that $W \neq 0$ implies
$\Kb(W) \neq 0$ under the genericity assumption (this is the purpose
of Lemma \ref{lemma:KbW}).  In both cases $J \neq 0$ and $W \neq 0$,
the final $\{e \}$-structure that we obtain on $M$ contains terms
which are derivatives of the graphing function $F$ up to order $8$.
Thus the results of these sections only require that $M$ is 
$\mathcal{C}^8$-smooth.

Finally, in section \ref{section:prolongation}, we show that when both
$J$ and $W$ vanish identically on $M$, we can reduce the equivalence
problem to a $10$-dimensional $\{e \}$-structure after performing two
suitable prolongations. The structure equations that we obtain are the
same as those enjoyed by the tube over the future the light cone:
\begin{equation*}
\left( {\sf Re} \,z_1 \right)^2 -\left( {\sf Re} \,z_2 \right)^2 
- \left( {\sf Re} \,z_3 \right)^2
=
0, \qquad \qquad {\sf Re} \,z_1 > 0,
\end{equation*}
which is locally biholomorphic (see \cite{Fels-Kaup-2007, Merker-2003}) to
the graphed hypersurface:
\begin{equation*}
u = \frac{z_1 \ov{z_1} + \frac{1}{2} z_1^2 \ov{z_2} + \frac{1}{2} \ov{z_1^2} z_2}{1 - z_2 \ov{z_2}}
.\end{equation*}
This proves the fact that when $J$ and $W$ are both vanishing, $M$ is locally biholomorphic to the tube over the light cone.
We summarize these results in the following theorem:

\begin{theorem} \label{thm:pocchiola}
Let $M \subset \C^3$
be a $\mathcal{C}^8$-smooth $5$-dimensional 
hypersurface of $CR$-dimension $2$, which is $2$-non degenerate, whose Levi form is of constant rank $1$ and which is generic
in the sense of definition \ref{definition:generic}. Then 
\begin{enumerate}
\smallskip\item{if $W \neq 0$ or if $J \neq 0$ on $M$,
 then the local equivalence problem for $M$ reduces to the equivalence problem for a five dimensional $\{e \}$-structure.}
\smallskip\item{if $W = 0$ and $J=0$ identically on $M$, then $M$ is locally biholomorphic to the tube over the light cone.}
\end{enumerate}
\end{theorem}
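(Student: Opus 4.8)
The plan is to run Cartan's equivalence method on the $G_1$-structure determined by the adapted coframe $(\rho_0, \kappa_0, \zeta_0, \ov{\kappa}_0, \ov{\zeta}_0)$ and the group $G_1$, pushing the reductions through the chain $G_1 \supset G_2 \supset G_3 \supset G_4$ already recorded above, and then treating the branching at the fourth loop. At each loop one lifts the coframe by a general element of the current group, computes the exterior derivatives, absorbs as much torsion as the group permits into redefinitions of the lifted forms, and reads off the \emph{essential} torsion coefficients. The first three normalizations (of $\ff$, of $\bb$, of $\dd$) are each obtained by setting a distinguished essential coefficient to a normal value; since the explicit structure equations exhibit these coefficients with a nondegenerate dependence on the corresponding parameter, the normalizations are legitimate and produce the successively smaller groups $G_2$, $G_3$, $G_4$. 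Throughout, every torsion coefficient must be kept in closed form as a differential expression in $k$ and $P$, so that the Jacobi identities together with the rank-one Levi relations among the derivatives of $k$ and $P$ can be used to decide which coefficients vanish identically and which do not (see \cite{Olver-1995, Sternberg}).

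For the first assertion I would analyze the fourth loop. Computing the structure equations of the $G_4$-coframe, the essential torsion is found to contain the two functions $J$ and $W$ displayed above, which transform homogeneously under the residual $G_4$-action with weights $3$ and $1$ in $\cc$ respectively. Hence, if $J \neq 0$ on $M$ one may normalize $\cc = J^{1/3}$, while if $W \neq 0$ one may instead normalize $\cc = W$; either normalization fixes $\cc$ and cuts $G_4$ down to the two-real-dimensional subgroup parametrized by $\ee$. In the case $J \neq 0$ (section \ref{section:J}) a further essential torsion coefficient depends genuinely on $\ee$ and can be set to a normal value, which determines $\ee$ and collapses the group to $\{e\}$; this yields a canonical coframe on the five-dimensional $M$, i.e.\ a $\{e\}$-structure, and the equivalence problem becomes the comparison of $\{e\}$-structures. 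In the case $W \neq 0$ (section \ref{section:W}) the same scheme applies once one knows that $\ee$ occurs in the next essential coefficient; this is where Lemma \ref{lemma:KbW} enters, for under the genericity hypothesis of Definition \ref{definition:generic} it guarantees $\Kb(W) \neq 0$, and $\Kb(W)$ is precisely the $\ee$-carrying quantity permitting the final normalization. In both branches one arrives at an absolute parallelism on $M$.

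For the second assertion, suppose $J \equiv 0$ and $W \equiv 0$ on $M$. Then the fourth-loop essential torsion contains no coefficient transforming nontrivially under $\cc$, so no normalization of $\cc$ is admissible and the $G_4$-structure is not yet rigid. The remedy is Cartan prolongation: one adjoins the group parameters as new fiber variables and forms the prolonged coframe on the total space. Recomputing the structure equations and absorbing torsion shows that a single prolongation does not close the system, but that a second prolongation does; after two prolongations all remaining torsion is constant and the resulting $10$-dimensional coframe satisfies a closed system of structure equations with constant coefficients. These constants are then to be identified with those of the homogeneous model, the tube over the light cone (equivalently the graphed hypersurface displayed above). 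Since two $\{e\}$-structures with identical \emph{constant} structure equations are locally equivalent --- the classical Lie--Cartan consequence that such a coframe is locally a Maurer--Cartan form, so the two coframes are matched by a local diffeomorphism --- we conclude that $M$ is locally biholomorphic to the tube over the light cone.

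The delicate point, and the main obstacle, is the explicit control of the torsion at every stage: one must verify, using only the Jacobi relations and the rank-one Levi identities, exactly which coefficients are essential and whether each vanishes identically, since a spurious vanishing would forbid a normalization while a spurious non-vanishing would permit an illegitimate one. This bookkeeping is heaviest at the fourth loop, where the invariants $J$ and $W$ emerge, and in the prolongation argument of the second assertion, where one must prove that the prolonged system closes after exactly two steps and that its structure constants coincide with those of the light-cone tube rather than merely resembling them.
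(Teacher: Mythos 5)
Your proposal follows essentially the same route as the paper: the three successive normalizations of $\ff$, $\bb$, $\dd$ reducing $G_1\supset G_2\supset G_3\supset G_4$, the fourth-loop branching on $J$ and $W$ with the normalizations $\cc=J^{1/3}$ or $\cc=W$, the final normalization of $\ee$ (using Lemma \ref{lemma:KbW} and genericity to get $\Kb(W)\neq 0$ in the $W$-branch), and, when $J\equiv W\equiv 0$, two prolongations yielding the $10$-dimensional $\{e\}$-structure with constant structure equations matched to those of the tube over the light cone. This is exactly the paper's proof, so there is nothing substantive to add or correct.
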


Granted that the functions $k$ and $P$ are expressed in terms of partial derivatives of order $\leq 3$ of the graphing function $F$,
and that the two main invariants $J$ and $W$ are explicit in terms of $k$ and $P$,
we stress that the local biholomorphic equivalence to the light cone is explicitely characterised in terms of $F$.

It is well-known (see, for example, \cite{Kobayashi}) that the group
of automorphisms $\mathcal{U}$ of an $\{ e \}$-structure on a $\mathcal{C}^{\infty}$ manifold
$N$ is a Lie transformation group such that
$ \text{dim} \, \mathcal{U} \leq \text{dim} \, N.$
As a result of theorem \ref{thm:pocchiola}, we thus have:

\begin{corollary}
Let $M \subset \C^3$ be a $\mathcal{C}^{\infty}$ $CR$-manifold satisfying the hypotheses of theorem \ref{thm:pocchiola}.
If $M$ is not locally equivalent to the tube over the light cone at a point $p \in M$, 
then the dimension of the Lie algebra of germs of $CR$-automorphisms of $M$ at $p$ is bounded by $5$.
\end{corollary}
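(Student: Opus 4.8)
The plan is to derive this as a direct consequence of Theorem~\ref{thm:pocchiola} together with the standard bound on the automorphism group of an $\{e\}$-structure. First I would fix a point $p \in M$ and invoke the hypothesis that $M$ is not locally equivalent to the tube over the light cone near $p$. By part~(2) of Theorem~\ref{thm:pocchiola}, this non-equivalence forces that we are \emph{not} in the case $J \equiv 0$ and $W \equiv 0$; under the genericity assumption of Definition~\ref{definition:generic}, the functions $J$ and $W$ are each either identically zero or nowhere vanishing, so the failure of simultaneous identical vanishing means that $J \neq 0$ on $M$ or $W \neq 0$ on $M$. Hence we are squarely in the situation of part~(1) of the theorem.

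Next I would apply part~(1): in the regime $J \neq 0$ or $W \neq 0$, the local equivalence problem for $M$ reduces to the equivalence problem for a five-dimensional $\{e\}$-structure, constructed explicitly on $M$ itself (which is five-dimensional). The key point here is that the absolute parallelism produced by the reduction lives on $M^5$, so $N = M$ in the notation of the cited bound, and $\dim N = 5$. I would then recall the general fact, attributed above to \cite{Kobayashi}, that the automorphism group $\mathcal{U}$ of an $\{e\}$-structure on a $\mathcal{C}^{\infty}$ manifold $N$ is a Lie transformation group with $\dim \mathcal{U} \leq \dim N$.

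The link between the two halves of the argument is that a germ of CR-automorphism of $M$ at $p$ is precisely a germ of local diffeomorphism preserving the biholomorphically invariant data encoded in the $G_1$-structure, and the reduction in Theorem~\ref{thm:pocchiola} identifies the solutions of this equivalence problem with the automorphisms of the resulting $\{e\}$-structure. Thus the Lie algebra of germs of CR-automorphisms of $M$ at $p$ injects into the Lie algebra of the automorphism group of the five-dimensional $\{e\}$-structure, whose dimension is therefore bounded above by $\dim M = 5$. This yields the stated bound of~$5$.

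The main obstacle I anticipate is not computational but conceptual: one must be careful that the equivalence reduction of Theorem~\ref{thm:pocchiola}, which is phrased at the level of the $\{e\}$-structure associated to the $G_1$-equivalence problem, genuinely transports CR-automorphisms to $\{e\}$-structure automorphisms in a way that is faithful at the level of germs at $p$. In other words, the delicate step is verifying that the absolute parallelism is a complete invariant for the local CR-equivalence problem near $p$, so that an infinitesimal CR-automorphism corresponds to an infinitesimal symmetry of the parallelism and the dimension count of \cite{Kobayashi} applies without loss. Once this identification is granted—which is exactly the content of saying the problem ``reduces to the equivalence problem for an $\{e\}$-structure''—the bound is immediate.
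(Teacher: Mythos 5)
Your proof is correct and follows essentially the same route as the paper: the corollary there is likewise deduced directly from Theorem~\ref{thm:pocchiola} (non-equivalence to the tube over the light cone rules out $J \equiv W \equiv 0$, so by genericity $J \neq 0$ or $W \neq 0$ and part~(1) yields a five dimensional $\{e\}$-structure on $M$) combined with the bound $\dim \, \mathcal{U} \leq \dim \, N$ for automorphisms of an $\{e\}$-structure cited from \cite{Kobayashi}. The point you flag about faithfully transporting germs of CR-automorphisms to automorphisms of the parallelism is exactly what the paper leaves implicit in the phrase ``reduces to the equivalence problem for a five dimensional $\{e\}$-structure,'' so your write-up matches the intended argument.
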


We now give a slight extension of theorem \ref{thm:pocchiola}.
If $M$ is a $5$-dimensional abstract $CR$-manifold of $CR$ dimension $2$ then there exist a subbundle $L$ of $\C \otimes TM$ of dimension $2$ such that
\begin{enumerate}
\item{$L \cap \ov{L}$ = \{0\}}
\item{$L$ is formally integrable.}
\end{enumerate}
It is then well-known that there exist local coordinates $(x_1,x_2,x_3,x_4,v)$ on $M$ and two local sections $\LL$ and $\mathcal{L}_2$ of $L$, such that:
\begin{equation*}
\LL = \frac{\partial}{\partial z_1} + A^1 \, \frac{\partial}{\partial v},
\end{equation*} and
\begin{equation*}
\mathcal{L}_2 = \frac{\partial}{\partial z_2} + A^2 \, \frac{\partial}{\partial v},
\end{equation*}
where $A^1$ and $A^2$ are two locally defined functions on $M$, and where the vector fields 
$\frac{\partial}{\partial z_1}$ and $\frac{\partial}{\partial z_2}$ are defined by the usual formulae:
\begin{equation*}
\frac{\partial}{\partial z_1} = \frac{1}{2} \left( \frac{\partial}{\partial x_1} - i \, \frac{\partial}{\partial x_2} \right)
,\end{equation*}
and
\begin{equation*}
\frac{\partial}{\partial z_2} = \frac{1}{2} \left( \frac{\partial}{\partial x_3} - i \, \frac{\partial}{\partial x_4} \right).
\end{equation*}
As a result, we can define the functions $k$ and $P$ together with the four vector fields $\KK$, $\Lb$, $\Kb$ and $\T$
in terms of the fundamental functions $A^1$ and $A^2$  as in the embedded case, and
all the subsequent structure equations at each step of Cartan's method
are unchanged. Theorem \ref{thm:pocchiola} remains thus valid 
in the more general setting of abstract $CR$-manifolds.

Finally, the $G$-structures that we introduce at each step are in fact globally defined on $M$ (as subbundles of $\C \otimes TM$).  
As a result, the first part of theorem \ref{thm:pocchiola} has the following global counterpart:

\begin{theorem}
Let $M$ be an abstract $CR$-manifold satisfying the hypotheses of theorem  \ref{thm:pocchiola}. Then $J$ and $W$ are globally defined on
$M$. If $J$ does not vanish on $M$ or if $W$ does not vanish on $M$, then there exist an absolute parallelism on $M$.
\end{theorem}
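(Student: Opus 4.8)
The plan is to verify that every object appearing in the successive reductions of Theorem~\ref{thm:pocchiola} is attached intrinsically to the $CR$-structure, so that the local constructions glue automatically into global ones. First I would record that the data feeding the algorithm are global: $T^{1,0}M$ is part of the $CR$-structure, the constant-rank-$1$ hypothesis turns $K:=\ker LF\subset T^{1,0}M$ into a global line subbundle, and the form $\sigma$ identifying $\C\otimes TM/(T^{1,0}M\oplus T^{0,1}M)$ is global as well. Hence the set $B_1\subset\C\otimes F(M)$ of complex frames $(\ell_1,\ell_2,\ov{\ell_1},\ov{\ell_2},t)$ adapted to $K\subset T^{1,0}M$ and to $\sigma$ through conditions (1)--(3) is a globally defined principal $G_1$-bundle; the explicit frame $(\LL,\KK,\Lb,\Kb,\T)$ is merely one (possibly only local) section of it.

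Next I would show that each reduction $B_1\supset B_2\supset B_3\supset B_4$ is global. At every loop the passage from $B_i$ to $B_{i+1}$ is effected by setting a torsion coefficient of the tautological coframe equal to a normalized constant; since the torsion is an intrinsic tensor on the global bundle $B_i$, and since the equations of sections~\ref{section:step1}--\ref{section:step3} (the successive normalizations of $\ff$, of $\bb$, and of $\dd$) solve uniquely for the relevant group parameter, each level set $B_{i+1}$ is again a globally defined principal subbundle. Pushing this to the fourth loop, the functions $J$ and $W$ arise as the essential torsion coefficients on the global bundle $B_4$; they are therefore globally defined, and because they are equivariant under the residual $G_4$-action, their vanishing loci descend to $M$, so the non-vanishing of $J$ (resp.\ $W$) is an intrinsic, frame-independent condition. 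In particular the genericity of Definition~\ref{definition:generic} and the alternative ``$J\neq0$'' or ``$W\neq0$'' are well posed over all of $M$.

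Assume $W\neq0$ on $M$. The normalization $\cc=W$ of section~\ref{section:W} is a global equation on $B_4$ involving no root extraction, so it defines a global reduction; Lemma~\ref{lemma:KbW} then supplies $\Kb(W)\neq0$ everywhere under the genericity assumption, which is exactly what is needed to normalize the last parameter $\ee$ globally. The result is a reduction of $B_4$ to a subbundle with trivial structure group, that is, a globally defined coframe on $M$: the desired absolute parallelism. The case $J\neq0$ proceeds identically, with $\cc=J^{\frac{1}{3}}$ followed by the $\ee$-normalization of section~\ref{section:J}.

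The step demanding the most care, and the main obstacle, is the cube root in the $J\neq0$ branch: even though $J$ is globally defined and nowhere zero, the equation $\cc=J^{\frac{1}{3}}$ fixes $\cc$ only up to a cube root of unity, so the reduced coframe is a priori single-valued only on a threefold cover of $M$. I would circumvent this by reading the normalization intrinsically: the condition $\cc^3=J$ (equivalently, that the relevant torsion coefficient attains its normalized value) is a global condition on $B_4$ and hence carves out a globally defined subbundle, on which the subsequent $\ee$-normalization leaves only the finite group $\mu_3$; being $0$-dimensional, this residue affects neither the existence of the parallelism nor the dimension bound $\dim\mathcal U\leq\dim M$ of the Corollary, and it disappears whenever a global cube root of $J$ can be chosen. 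The $W\neq0$ branch, where $\cc=W$ is single-valued from the outset, is free of this subtlety. What then remains is the routine check that the structure equations of sections~\ref{section:J} and~\ref{section:W} involve only Lie derivatives of $k$ and $P$ along $\LL,\KK,\Lb,\Kb,\T$, so that the final $\{e\}$-structure, inherited from the global bundle $B_4$, is genuinely defined over all of $M$.
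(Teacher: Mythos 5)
Your argument is, in substance, the paper's own proof. The paper disposes of this theorem in a single remark preceding its statement: the $G$-structures introduced at each step are globally defined subbundles of $\C\otimes TM$ (together with the observation that, in the abstract setting, $k$, $P$ and the frame $\LL$, $\KK$, $\Lb$, $\Kb$, $\T$ can be built from local sections of $L$ with unchanged structure equations), so the local normalizations of sections \ref{section:step1}--\ref{section:W} patch into global ones. Your chain $B_1\supset B_2\supset B_3\supset B_4$, the description of each reduction as the level set of an intrinsic torsion coefficient, and the equivariance argument by which the vanishing loci of $J$ and $W$ descend to $M$ are exactly this remark written out carefully; in particular, reading ``$J$ and $W$ are globally defined'' as a statement about equivariant functions on the global bundle $B_4$, whose vanishing is frame-independent, is the correct interpretation.

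The one point where you genuinely go beyond the paper is the cube root in the branch $J\neq 0$, $W\equiv 0$: the paper nowhere acknowledges that $\cb^{3}=\ov{J}$ fixes $\cb$ only up to a cube root of unity, whereas $\cc=W$ is single-valued. Your formulation of $\{\cb^{3}=\ov{J}\}$ as a globally defined subbundle with residual group $\mu_3$ after the $\ee$-normalization of section \ref{section:J} is the honest description of what Cartan's method yields there; but the claim that this residue ``affects neither the existence of the parallelism'' overstates it. If the mod-$3$ winding class of $J\colon M\to\C^{*}$ is nontrivial, no global cube root exists, and the canonical coframe then lives on the threefold cover given by the total space of the $\mu_3$-bundle, not on $M$ itself; what is unconditional is the parallelism on that cover and the resulting bound $\leq 5$ on the algebra of infinitesimal CR-automorphisms. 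Since the paper's own statement is silent on exactly this point, this is a caveat you share with the paper rather than a gap relative to it. In the branch $W\neq 0$, where Lemma \ref{lemma:KbW} together with the genericity assumption of Definition \ref{definition:generic} makes the normalization of $\ee$ single-valued, your argument does produce an honest global coframe on $M$.
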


\medskip
{\em Acknowledgments.}
I wish to thank Professor Alexander Isaev for insightful suggestions that provided
improvements, e.g. the abstract and global counterparts of theorem \ref{thm:pocchiola}.

\section{Geometric and analytic set up} \label{section:setup}

\subsection{Shape of the initial coframe}
Let $M \subset \C^3$ be a local real analytic hypersurface passing through the origin of $\C^3$.
We recall that $M$ can be represented 
as a graph over the $5$-dimensional real hyperplane
$\C_{ z_1} \times \C_{ z_2} \times \R_v$:
\[
u
=
F\big(z_1,z_2,\overline{z_1},\overline{z_2},v\big),
\]
where $F$ is a local real analytic function depending
on $5$ arguments.
We make the assumption that $M$ is a $CR$-submanifold of $CR$ dimension $2$, that is the bundle $T^{1,0}M$ is of complex dimension $2$.
Let us look for a frame of 
$T^{1,0}M$ constituted of two vectors field of the form:

\begin{align*}
\mathcal{L}_1
&
=
\frac{\partial}{\partial z_1}
+
A_1\,
\frac{\partial}{\partial w},
\\
\mathcal{L}_2
&
=
\frac{\partial}{\partial z_2}
+
A_2\,
\frac{\partial}{\partial w},
\end{align*}
with two unknown functions $A_1$ and $A_2$.
As $M$ is the zero set of the function $G:= u - F$, the condition that $\LL$ and $\mathcal{L}_2$ belong to $T^{1,0}M$ take the form:
\begin{equation*}
d \, G\left( \LL \right)= 0 \qquad \text{and} \qquad 
d \, G \left( \mathcal{L}_2 \right) = 0.
\end{equation*}
As we have:
\begin{equation*}
dG = du - F_{z_1} \, dz_1 - F_{z_2} \, dz_2 - F_{\ov{z_1}} \, d\ov{z_1} - F_{\ov{z_2}} \, d \ov{z_{2}} - F_v \, dv
\end{equation*}
and
\begin{equation*}
\partial_{w} = \frac{1}{2} \left( \partial_u - i \, \partial_v \right),
\end{equation*}
these two conditions read as: 
\begin{align*}
F_{z_j} - \frac{1}{2} \, A_j - \frac{i}{2} \, F_v \, A_j  =0,  && j=1,2
,\end{align*} 
which lead to:
\begin{align*}
A_j &= 2 \, \frac{F_{z_j}}{1 + i \, F_v} && j=1,2.
\end{align*}

If $\pi$ denotes the canonical projection $\C^3 \longrightarrow \C^2 \times \R$ which sends the variables $(z_1,z_2,w)$ on $(z_1,z_2,v)$, the fact that $M$ 
is a graph over the hyperplane $\C_{ z_1} \times \C_{ z_2} \times \R_v$ makes the restriction of $\pi$ to $M$ a local diffeomorphism 
$M \longrightarrow \C^2 \times \R$, that is a local chart on $M$. All the subsequent computations will be made in coordinates $(z_1,z_2,v)$, 
which means that they will be made through this local chart provided by $\pi$. 
The (extrinsic) vector fields $\mathcal{L}_j$ are mapped by $\pi$ onto the (intrinsic) vector fields 
$\pi_{*} \left( \mathcal{L}_j \right)$. As $\pi_{*}\left( \partial_w \right) = - \frac{i}{2} \, \partial_v$, 
we have:
\begin{align*}
\pi_{*} \left( \mathcal{L}_j \right)& = \partial_{z_j} + A^j  \, \partial_{v} && j=1,2 
\end{align*}
where
\begin{align*}
A^j &:= - i \, \frac{F_{z_j}}{1 + i \, F_v} && j =1,2.
\end{align*}
In order to simplify the notations, we will still denote $p_{*} \left( \mathcal{L}_j \right)$ by $\mathcal{L}_j$ in the sequel.
If $\sigma$ is a $1$-form on $M$ whose kernel at each point $p$ is $\left. T^{1,0}_pM \oplus T^{0,1}_pM \right. $, we identify the projection 
\begin{equation*}
\C \otimes T_pM \longrightarrow \left. \C \otimes T_pM \right.\big/ \left. T^{1,0}_pM \oplus T^{0,1}_pM \right. 
\end{equation*}
with the map $\sigma_p$: \, $\C \otimes T_pM \longrightarrow \C$. An example of such a $1$-form $\sigma$ is given by:
\begin{equation*}
\sigma := dv - A^1 \, dz_1 - A^2 \, dz_2 - \ov{A^1} \, d\ov{z_1} - \ov{A^2} \, d \ov{z_2}.
\end{equation*}
As an identification of $T^{1,0}_pM$ with $\C^2$ is also provided by the basis of vector fields $\LL$ and $\mathcal{L}_2$ ,
the Levi form of $M$ can be viewed as the skew-symmetric hermitian form on $\C^2$ given by the matrix:
\begin{equation*}
LF:= 
\begin{pmatrix}
\sigma_p \left(i \, \big[ \LL , \Lb \big] \right)& \sigma_p \left( i \, \big[ \mathcal{L}_2, {\Lb} \big] \right)\\
\sigma_p \left( i \, \big[ \LL , \ov{\mathcal{L}_2} \big] \right) & \sigma_p \left( i \,  \big[ \mathcal{L}_2, \ov{\mathcal{L}_2} \big] \right) \\
\end{pmatrix}
.\end{equation*}
The computation of the Lie bracket $\big[ \mathcal{L}_1 , \ov{\mathcal{L}_1} \big]$ gives:
\begin{align*}
\label{eq:=eql}
\big[ \mathcal{L}_1 , \ov{\mathcal{L}_1} \big]  & = \big[ \partial_{z_1} + A^1 \, \partial_{v} , \partial_{\ov{z_1}} + \ov{A^1} \, \partial_{v} \big] \\
                                               & =  \left( \ov{A^1}_{z_1} - A^1_{\ov{z_1}} + A^1 \ov{A^1_{v}} - \ov{A^1} A^1_{v} \right) \, \partial_v.
\end{align*}
Similar computations of $\big[ \mathcal{L}_1 , \ov{\mathcal{L}_2} \big]$, $\big[ \mathcal{L}_2 , \ov{\mathcal{L}_1} \big]$ and
 $\big[ \mathcal{L}_2 , \ov{\mathcal{L}_2} \big]$ give that 
\begin{equation*}
\big[ \mathcal{L}_1 , \ov{\mathcal{L}_2} \big]=  \big[ \mathcal{L}_2 , \ov{\mathcal{L}_1} \big] = \big[ \mathcal{L}_2 , \ov{\mathcal{L}_2} \big] = 0 \qquad 
\mod{\partial_v}.
\end{equation*}
In the sequel, we make the assumption that $M$ is Levi degenerate of rank $1$. There is therefore a function $k$ defined on $M$ such that 
{\tiny $ \begin{pmatrix} k \\ 1 \end{pmatrix}$} gives a basis of the kernel of $LF$.
As a result of the definition of $k$ and $LF$, the four Lie brackets $\big[ \mathcal{L}_1 , \ov{\mathcal{L}_1} \big]$, $
\big[ \mathcal{L}_1 , \ov{\mathcal{L}_2} \big]$, $\big[ \mathcal{L}_2 , \ov{\mathcal{L}_1} \big]$ and
 $\big[ \mathcal{L}_2 , \ov{\mathcal{L}_2} \big]$
enjoy the following two relations:
\begin{equation} \label{eq:elim}
\left\{
\begin{aligned}
k \,  \big[ \mathcal{L}_1 , \ov{\mathcal{L}_1} \big] +  \big[ \mathcal{L}_2 , \ov{\mathcal{L}_1} \big] & =0 \\
k \,  \big[ \mathcal{L}_1 , \ov{\mathcal{L}_2} \big] +  \big[ \mathcal{L}_2 , \ov{\mathcal{L}_2} \big] & =0.
\end{aligned}
\right.
\end{equation}
Moreover, the vector field 
$\mathcal{K}:= \mathcal{L}_2 + k \, \mathcal{L}_1$ gives a basis of the kernel of the Levi form on $M$ and the four vectors 
fields $\LL$, $\mathcal{K}$, $\Lb$ and $\ov{\mathcal{K}}$ give a basis of $T^{1,0}M \oplus T^{0,1}M$.
Let us introduce the fifth vector field
\begin{equation*}
\mathcal{T} := i \, \big[ \mathcal{L}_1 , \ov{\mathcal{L}_1} \big]. 
\end{equation*}
As $\mathcal{T}$ lies in the line bundle generated by $\partial_v$, the five vector fields $\mathcal{T}$, $\LL$, $\Lb$, $\mathcal{K}$ and $\ov{\mathcal{K}}$
give a basis of $\C \otimes_{\R} TM$.
\subsection{Lie bracket structure} \label{subsection:lie}
Let us explore the Lie bracket relations satisfied by this basis of $\C \otimes_{\R} TM$.
We start with the computation of $ \big[ \LL , {\mathcal{L}_2} \big]$.

\begin{align*}
\big[ \LL , {\mathcal{L}_2} \big] & = \big[ \partial_{z_1} + A^1 \partial_v , \partial_{z_2} + A^2 \partial_v \big] \\
 & \equiv 0 \qquad \text{mod $\partial_v$}, 
\end{align*}
which means that $ \big[ \LL , {\mathcal{L}_2} \big]$ belongs to the line bundle generated by $\partial_v$. On the other hand, as $\LL$ and $\mathcal{L}_2$ 
both belong to $T^{1,0}M$, 
and as it is a well known fact that $T^{1,0}M$ is involutive, $ \big[ \LL , {\mathcal{L}_2} \big]$  belongs to  $T^{1,0}M$, whose intersection with 
$\C \cdot \partial_v$ is reduced to zero. We thus have:
\begin{equation*}
\big[ \LL , {\mathcal{L}_2} \big] = 0.
\end{equation*}
As a result, we can compute $ \big[ \mathcal{K} , \mathcal{L}_1 \big]$. Indeed we have:

\begin{equation*}
\big[ \mathcal{K} , \mathcal{L}_1 \big]  = \big[ k \, \LL + \mathcal{L}_2, \LL \big] = - \LL(k) \, \LL.
\end{equation*}

We now turn our attention on the computation of the bracket $ \big[ \mathcal{K} , \ov{\mathcal{L}_1} \big]$. Using the relation $(\ref{eq:elim})$, we get:
\begin{align*}
\big[ \mathcal{K} , \ov{\mathcal{L}_1} \big] & =  \big[ k \, \LL + \mathcal{L}_2, \Lb \big] \\
                                             & = k \, \big[ \LL, \Lb \big] + \big[ \mathcal{L}_2, \ov{\mathcal{L}_1} \big] - \Lbk \, \LL \\
                                             & = - \Lbk \, \LL.
\end{align*}

To compute further brackets, we need to determine the value of $\mathcal{K}(\ov{k})$.

Taking the Lie bracket between $\mathcal{K}$ and the complex conjugate of the first equation of $(\ref{eq:elim})$ gives:
\begin{equation*}
\mathcal{K}(\ov{k}) \, \big[ \LL , \Lb \big] + \ov{k} \, \big[ \mathcal{K} , \big[ \LL , \Lb \big] \big] + \big[ \mathcal{K} , \big[ \LL , 
\ov{\mathcal{L}_2} \big] \big] = 0.
\end{equation*}
As $\mathcal{K}(\ov{k}) \, \big[ \LL , \Lb \big]$ belongs to $\C \cdot \partial_v$, the vector field 
\begin{equation*}
S :=  \ov{k} \, \big[ \mathcal{K} , \big[ \LL , \Lb \big] \big] + \big[ \mathcal{K} , \big[ \LL , 
\ov{\mathcal{L}_2} \big] \big]
\end{equation*}
is equal to its projection on $\C \cdot \partial_v$. It is thus sufficient to perform its computation  mod $T^{1,0}M$.
The Jacobi identity gives:
\begin{equation*}
S  = \ov{k} \, \big[ \big[ \mathcal{K}, \LL \big] , \Lb \big] + \ov{k} \, \big[ \LL, \big[ \mathcal{K}, \Lb \big] \big] + \big[ \big[ \mathcal{K}, \LL \big], 
\ov{\mathcal{L}_2} \big] + \big[ \LL, \big[ \mathcal{K}, \ov{\mathcal{L}_2} \big] \big]. 
\end{equation*}
As $\big[ \mathcal{K} , \Lb \big]  =  - \Lb(k) \, \LL$, we have $\big[ \LL, \big[ \mathcal{K}, \Lb \big] \big] \equiv 0$ mod $T^{1,0}M$.
Similarly we have $\big[ \mathcal{K} ,\ov{ \mathcal{L}_2} \big]  =  - \ov{\mathcal{L}_2}(k) \, \LL$, from which we deduce that $\big[ \LL, \big[ \mathcal{K}, 
\ov{\mathcal{L}_2} \big] \big] \equiv 0$ mod $T^{1,0}M$. 
We thus have:
\begin{align*}
S & \equiv \ov{k} \, \big[ \big[ \mathcal{K}, \LL \big] , \Lb \big] + \big[ \big[ \mathcal{K}, \LL \big], 
\ov{\mathcal{L}_2} \big] && \text{mod $T^{1,0}M$} \\
  & \equiv  \big[ \big[ \mathcal{K}, \LL \big] ,  \ov{k} \, \Lb \big] + \big[ \big[ \mathcal{K}, \LL \big], 
\ov{\mathcal{L}_2} \big] && \text{mod $T^{1,0}M$} \\
  & \equiv \big[ \big[ \mathcal{K}, \LL \big] , \ov{\mathcal{K}} \big] && \text{mod $T^{1,0}M$}.
\end{align*}
The involutivity of the bundle $T^{1,0}M$ implies that $\big[ \mathcal{K}, \LL \big]$ belongs to $T^{1,0}M$. As $\mathcal{K}$ has been choosen to belong 
to the kernel of the Levi form of $M$,  $\big[ \big[ \mathcal{K}, \LL \big] , \ov{\mathcal{K}} \big]$ belongs to  $T^{1,0}M$. We thus have $S \equiv 0$ mod $T^{1,0}M$,
from which we deduce:
\begin{equation}
\label{eq:Kb}
\mathcal{K}(\ov{k}) = 0.
\end{equation}

We are now ready to compute $ \big[ \mathcal{K} , \ov{\mathcal{K}} \big]$:
\begin{align*}
\big[ \mathcal{K} , \ov{\mathcal{K}} \big]  &  =  \big[ k \, \LL + \mathcal{L}_2, \ov{k} \, \ov{\LL} + \ov{\mathcal{L}_2}  \big] \\ 
                                        &  =  k \, \ov{k} \, \big[ \LL , \Lb \big] + k \, \big[ \mathcal{L}_1 \ov{\mathcal{L}_2} \big] + 
\ov{k} \, \big[\mathcal{L}_2, \LL \big] + \ov{k} \, \big[ \mathcal{L}_2 , \Lb \big] + \big[ \mathcal{L}_2 , \ov{\mathcal{L}_2} \big]\\
& \quad +  k \, \Lkb \, \Lb + \mathcal{L}_2(\ov{k}) \, \ov{\mathcal{L}_1} - \ov{\mathcal{L}_2}(k) \, \LL - \ov{k} \, \Lbk \, \LL \\
                                         &  =  k \left( \ov{k} \, \big[ \LL , \Lb \big] + \big[ \LL \mathcal{L}_2 \big] \right) + 
\left( \ov{k} \, \big[ \mathcal{L}_2 , \Lb \big] + \big[ \mathcal{L}_2 , \ov{\mathcal{L}_2} \big] \right) +  \mathcal{K}(\ov{k}) \, \Lb - 
\ov{\mathcal{K}}(k) \, \LL  \\
& =  0 \qquad  \text{by $(\ref{eq:elim})$ and $(\ref{eq:Kb})$}.
\end{align*}
We now compute $\big[\mathcal{L}_1, \mathcal{T} \big]$.
We recall that from the definition of $\mathcal{T}$ we have $\mathcal{T} = l \, \partial_v $, where the function $l$ is defined by 
\begin{equation*}
l: = i \left(\ov{A^1_{z_1}} - A^1_{\ov{z_1}} + A^1 \ov{A^1_{v}} - \ov{A^1} A^1_{v} \right).
\end{equation*}
 We thus have:
\begin{align*}
\big[\mathcal{L}_1, \mathcal{T} \big] & = \big[ \partial_{z_1} + A^{1} \, \partial_v , l \, \partial_v \big] \\
& = \left( l_{z_1} + A^1 \, l_v - l \, A^1_v \right) \partial_v \\
& = P \, \mathcal{T}.
\end{align*}
where $P$ is the function defined on $M$ by
\begin{equation*}
P = \frac{ l_{z_1} + A^1 \, l_v - l \, A^1_v}{l}.
\end{equation*}
The last bracket that we need to compute is $\big[ \mathcal{K}, \mathcal{T} \big]$. Using the Jacobi identity, we get:
\begin{align*}
\big[\mathcal{K}, \mathcal{T} \big] & = i \big[ \mathcal{K} , \big[ \LL, \ov{\mathcal{L}_1} \big] \big] \\
& = i \,  \big[ \big[ \mathcal{K}, \LL \big], \Lb \big] + i \,  \big[ \LL , \big[ \mathcal{K}, \Lb \big] \big]  \\
& = i \, \big[ -\Lk \, \LL, \Lb \big] + i \,  \big[ \LL, - \Lbk \LL \big] \\
& = - \Lk \, \mathcal{T} + i \, \Lb \left( \Lk \right) \LL - i \, \LL \left( \Lbk \right) \LL \\
& = - \Lk \, \mathcal{T} - i \, \big[\LL, \Lb \big] (k) \, \LL \\
& = - \Lk \, \mathcal{T} - \mathcal{T}(k) \, \LL.
\end{align*}
The Jacobi identity actually implies other relations between the functions $P$, $k$ and their derivatives with respect to the five vector fields
$\mathcal{T}$, $\LL$, $\Lb$, $\mathcal{K}$ and $\ov{\mathcal{K}}$. The following computation of $\big[ \mathcal{K} , \big[ \mathcal{T} , \LL \big] \big] $ 
aims to determine an expression of $\mathcal{K}(P)$.
\begin{align*}
\big[ \mathcal{K} , \big[ \mathcal{T} , \LL \big] \big] & = - \big[ \mathcal{K} , P \, \mathcal{T} \big] \\
                                   & = - \mathcal{K}(P) \, \mathcal{T} - P \, \big[ - \Lk \, \mathcal{T} - \mathcal{T}(k) \, \LL \big] \\
                                   & = - \mathcal{K}(P) \, \mathcal{T} + P \, \Lk \, \mathcal{T} + P \, \mathcal{T}(k) \, \LL.
\end{align*}
On the other hand, the Jacobi identity gives:
\begin{align*}
\big[ \mathcal{K} , \big[ \mathcal{T} , \LL \big] \big] & =  \big[ \big[ \mathcal{K} , \T \big] , \LL \big] + 
\big[ \mathcal{T} , \big[ \mathcal{K} , \LL \big] \big] \\
                                   & = \big[ - \Lk \, \T - \Tk \, \LL , \LL \big] + \big[\T, - \Lk \, \LL \big] \\                    
                                   & = \LL \left( \Tk \right) \, \LL - \Lk \, \big[ \T , \LL \big] + \LL \left( \Lk \right) \, \T \\
                                   & \quad -  \Lk \, \big[ \T , \LL \big] - \T \left( \Lk \right) \, \LL \\
                                   & = \big[ \LL , \T \big](k) \, \LL + 2 \, \Lk \, \big[\LL, \T \big] + \LL \left( \Lk \right) \, \T \\
                                   & = P \, \Tk \, \LL + 2 \, \Lk \, P \, \T + \LL \left( \Lk \right) \, \T \\
                                   & = P \, \Tk \, \LL + \big(   2 \, \Lk \, P + \LL \left( \Lk \right) \big) \, \T .
\end{align*}
By identification of both results, we have:
\begin{equation*}
- \KK(P) + P \, \Lk =  2 \, \Lk \, P + \LL \left( \Lk \right),
\end{equation*}
that is:
\begin{equation*}
\mathcal{K}(P) = - P \, \Lk - \LL \left( \Lk \right).
\end{equation*}
We compute $\KK(\ov{P})$ in a similar way. We start with a direct computation of $\big[ \KK, \big[ \T, \Lb \big] \big]$:
\begin{align*}
\big[ \KK, \big[ \T, \Lb \big] \big]   & = - \big[ \mathcal{K} , \ov{P} \, \mathcal{T} \big] \\
                                   & = - \mathcal{K}(\ov{P}) \, \mathcal{T} - \ov{P} \, \big[ - \Lk \, \mathcal{T} - \mathcal{T}(k) \, \LL \big] \\
                                   & = - \mathcal{K}(\ov{P}) \, \mathcal{T} + \ov{P} \, \Lk \, \mathcal{T} + \ov{P} \, \mathcal{T}(k) \, \LL.
\end{align*}
The computation using the Jacobi identity gives:
\begin{align*}
\big[ \mathcal{K} , \big[ \mathcal{T} , \Lb \big] \big] & =  \big[ \big[ \mathcal{K} , \T \big] , \Lb \big] + 
\big[ \mathcal{T} , \big[ \mathcal{K} , \Lb \big] \big] \\
                                   & = \big[ - \Lk \, \T - \Tk \, \LL , \Lb \big] + \big[\T, - \Lb \, \LL \big] \\                    
                                   & = \Lb \left( \Lk \right) \, \T - \Lk \, \big[ \T , \Lb \big] + \Lb \left( \Tk \right) \, \LL \\
                                   & \quad - \Tk \big[ \LL, \Lb \big]  - \T \left( \Lbk \right) \, \LL - \Lbk \big[\T, \LL \big] \\
                                   &  = \Lb \left( \Lk \right) \, \T + \ov{P} \, \Lk \, \T + \big[\Lb, \T \big](k) \, \LL + i \, \Tk \, \T + P \, \Lbk \, \T \\
                                   &  = \left( \Lb \left( \Lk \right) + \ov{P} \, \Lk + P \, \Lbk + i \, \Tk \right) \T + \ov{P} \, \Tk \, \LL.
\end{align*}
Identification of both results gives:
\begin{equation*}
\KK(\ov{P}) = -P \, \Lbk  - \Lb \left( \Lk \right) - i \, \Tk.
\end{equation*}

Let us summarize the results that we have obtained so far. 
The five vector fields $\mathcal{T}$, $\LL$, $\Lb$, $\mathcal{K}$ and $\ov{\mathcal{K}}$ enjoy the following Lie bracket structure:
\begin{equation}
\label{eq:ls}
\begin{aligned}
\big[ \mathcal{T} , \LL \big] & = - P \, \mathcal{T}, \\
\big[ \mathcal{T} , \Lb \big] & = - \ov{P} \, \mathcal{T}, \\
\big[ \mathcal{T} , \mathcal{K} \big] & =  \Lk \, \mathcal{T} + \mathcal{T}(k) \,  \LL,  \\
\big[ \mathcal{T} , \ov{\mathcal{K}} \big] & =  \Lbkb \, \mathcal{T} + \mathcal{T}(\ov{k}) \,  \Lb,  \\
\big[ \LL, \Lb \big] & = - i \, \mathcal{T}, \\
\big[ \LL, \mathcal{K}  \big]& = \Lk \, \LL, \\
\big[ \LL, \ov{\mathcal{K}}  \big]&  = \Lkb \, \Lb, \\
\big[ \Lb , \mathcal{K} \big] &= \Lbk \, \LL, \\
\big[ \Lb, \ov{\mathcal{K}} \big] &= \Lbkb \, \Lb, \\
\big[ \mathcal{K}, \ov{\mathcal{K}} \big] & =0,
\end{aligned}
\end{equation}
where $P$ is a function defined on $M$.
The Jacobi identity implies the following two additional relations:
\begin{equation*}
\mathcal{K}(P) = - P \, \Lk - \LL \left( \Lk \right),
\end{equation*}
and
\begin{equation*}
\mathcal{K}(\ov{P}) =  -P \, \Lbk  - \Lb \left( \Lk \right) - i \, \Tk.
\end{equation*}

\subsection{Structure equations of the initial coframe}
From the formula
\begin{equation*}
d \omega(X,Y)=X \left( \omega(Y) \right) - Y \left(\omega(X) \right) - \omega \left( \big[ X, Y \big] \right) 
,\end{equation*}
where $X$ and $Y$ are two arbitrary vector fields and $\omega$ is a 1-form, we deduce from equation (\ref{eq:ls})
the structure equations enjoyed by the base coframe $(\rho_{0}, \kappa_{0}, \zeta_{0}, \ov{\kappa}_{0}, \ov{\zeta}_0)$, that is:
\begin{equation}
\label{eq:streq}
\begin{aligned}
d \rho_{0} & = P \left. \rho_0 \wedge \kappa_0 \right. - \Lk \left. \rho_0 \wedge \zeta_0 \right. + \ov{P} \left. \rho_0 \wedge \ov{\kappa_0} \right.
- \Lbkb  \left. \rho_0 \wedge \ov{\zeta_0} \right. + i \left.  \kappa_0 \wedge \ov{\kappa_0} \right., \\
d \kappa_{0} & = - \mathcal{T}(k) \, \left.  \rho_0 \wedge \zeta_0 \right. - \Lk \, \left. \kappa_0 \wedge \zeta_0 \right. 
+ \Lbk \, \left. \zeta_0 \wedge \ov{\kappa}_0 \right. 
, \\
d \zeta_0 & = 0, \\
d \ov{\kappa}_0 & = - \Tkb \, \left.  \rho_0 \wedge \ov{\zeta_0} \right.  
- \Lkb \, \left. \kappa_0 \wedge  \ov{\zeta_0} \right. - \Lbkb \, \left. \ov{\kappa_0} \wedge \ov{\zeta_0} \right. 
,\\
d \ov{ \zeta_{0}} & = 0.
\end{aligned}
\end{equation}
\subsection{Equivalence under biholomorphisms}
Let $\phi$ be a local biholomorphism of $\C^3$ such that $\phi(0)=0$ which preserves $M$, i.e. such that $\phi(M) = M$.
Then the restriction $\phi_{M}$ of $\phi$ to  $M$ is a local real analytic diffeomorphism of $M$ which satisfies the 
following two additional conditions:
\begin{enumerate}
\item{$\phi_{M}$ stabilizes the bundle $T^{1,0}M$.}
\item{$\phi_{M}$ stabilizes the kernel of the Levi form of $M$.}
\end{enumerate}
As a result, there are three functions $\ff$, $\cc$ and $\ee$ on $M$ such that:
\begin{equation*}
\phi_{M*}(\KK) = \ff \, \KK,
\end{equation*}
and
\begin{equation*}
\phi_{M*}(\LL) = \cc \, \LL + \ee \, \KK.
\end{equation*}
Of course, as $\phi_{M}$ is a real diffeomorphism, we shall also have:
\begin{equation*}
\phi_{M*}(\Kb) = \ov{\phi_{M*}(\KK)} =  \fb \, \Kb,
\end{equation*}
and
\begin{equation*}
\phi_{M*}(\Lb) = \ov{\phi_{M*}(\LL)} = \cb \, \Lb + \eb \, \Kb.
\end{equation*}
On the other hand there is a priori no special condition that shall be
satisfied by $\phi_{M*}(\T)$, except the fact that it shall be a real
vector field, because $\T$ is real. There are thus a real function
${\sf a}$ and two complex valued functions $\bb$ and $\dd$ such that:
\begin{equation*}
\phi_{M*}(\T) = 
{\sf a} \, \T + \bb \, \LL + \dd \, \KK + \bbb \, \Lb + \db \, \Kb.
\end{equation*}
We sum up these relations with the following matrix notation:
\begin{equation*}
\phi_{M*} \left( 
\begin{matrix}
\T \\
\LL \\
\KK \\
\Lb \\
\Kb
\end{matrix}
\right)
=  
\begin{pmatrix}
{\sf a} & {\sf b} & \dd & \bbb & \db \\
0 & {\sf c} & \ee & 0 & 0 \\
0 & 0 & {\sf f} & 0 & 0 \\
0 & 0 & 0 & \overline{\sf c} & \eb \\
0 & 0 & 0 & 0 & \overline{\sf f}
\end{pmatrix}
\cdot
\begin{pmatrix}
\T \\
\LL \\
\KK \\
\Lb \\
\Kb
\end{pmatrix}.
\end{equation*}  
As $\phi_{M*}$ is invertible, the functions ${\sf a}$, $\cc$ and $\ff$ shall not vanish on $M$.
The relation between the coframe $(\rho_0 , \kappa_0, \zeta_0, \ov{\kappa_0} , \ov{\zeta_0})$ and the coframe
$ \phi_M^* \left( \rho_0 ,\kappa_0, \zeta_0,\ov{\kappa_0} ,  \ov{\zeta_0} \right)$ is thus given by a plain transposition 
of the previous equation, that is:
\begin{equation*}
\phi_M^* \left( 
\begin{matrix}
\rho_0 \\
\kappa_0 \\
\zeta_0 \\
\ov{\kappa}_0 \\
\ov{\zeta}_0
\end{matrix}
\right)
=  
 \begin{pmatrix}
{\sf a} & 0 & 0 & 0 & 0 \\
{\sf b} & {\sf c} & 0 & 0 & 0 \\
{\sf d} & {\sf e} & {\sf f} & 0 & 0 \\
\overline{\sf b} & 0 & 0 & \overline{\sf c} & 0 \\
\overline{\sf d} & 0 & 0 & \overline{\sf e} & \overline{\sf f}
\end{pmatrix}
\cdot
\begin{pmatrix}
\rho_0 \\
\kappa_0 \\
\zeta_0 \\
\ov{\kappa_0} \\
\ov{\zeta_0}
\end{pmatrix}.
\end{equation*}  
In fact the function ${\sf a}$ shall satisfy another condition.
As $\T = i \, \big[ \LL , \Lb \big]$, we have
\begin{align*}
\phi_{M*}(\T) & = i \,  \big[ \phi_{M*}(\LL) , \phi_{M*}(\Lb) \big] \\
              & = i \,  \big[  \cc \, \LL + \ee \, \KK , \cb \, \Lb + \eb \, \Kb \big] \\
             & \equiv \cc \, \cb \, \T && \mod{T^{1,0}M},
\end{align*}
On the other hand we have from the definition of ${\sf a}$ that $\phi_{M*}(\T) \equiv {\sf a} \, \T  \mod{T^{1,0}M}$, which implies:
\begin{equation*}
{\sf a} = \cc \, \cb.
\end{equation*}
\subsection{Initial $G$-structure}
Let $G_1$ be the  $10$ dimensional real matrix Lie group whose elements are of the form:
\begin{equation*}
g := \begin{pmatrix}
{\sf c}\overline{\sf c} & 0 & 0 & 0 & 0 \\
{\sf b} & {\sf c} & 0 & 0 & 0 \\
{\sf d} & {\sf e} & {\sf f} & 0 & 0 \\
\overline{\sf b} & 0 & 0 & \overline{\sf c} & 0 \\
\overline{\sf d} & 0 & 0 & \overline{\sf e} & \overline{\sf f}
\end{pmatrix},
\end{equation*}
where $\cc$ and $\ff$ are non-zero complex numbers whereas $\bb$, $\dd$ and $\ee$ are arbitrary complex numbers.

Following~\cite{ Olver-1995}, let us introduce $5$ new one-forms
$\rho$, $\kappa$, $\zeta$, $\overline{ \kappa}$, $\overline{ \zeta}$
in accordance with the shape of the ambiguity matrix related to local
biholomorphic equivalences of such kinds of hypersurfaces:
\[
\begin{pmatrix}
\rho
\\
\kappa
\\
\zeta
\\
\overline{\kappa}
\\
\overline{\zeta}
\end{pmatrix}
:=
g \cdot 
\begin{pmatrix}
\rho_0
\\
\kappa_0
\\
\zeta_0
\\
\overline{\kappa_0}
\\
\overline{\zeta_0}
\end{pmatrix}
,\]
that is to say, in expanded form:
\[
\aligned
\rho
&
:=
{\sf c}\overline{\sf c}\,\rho_0,
\\
\kappa
&
:=
{\sf b}\,\rho_0+{\sf c}\,\kappa_0,
\\
\zeta
&
:=
{\sf d}\,\rho_0+{\sf e}\,\kappa_0+{\sf f}\,\zeta_0,
\\
\overline{\kappa}
&
:=
\overline{\sf b}\,\rho_0+\overline{\sf c}\,\overline{\kappa_0},
\\
\overline{\zeta}
v&
:=
\overline{\sf d}\,\rho_0+\overline{\sf e}\,\overline{\kappa_0}
+\overline{\sf f}\,\overline{\zeta_0}.
\endaligned
\]
By inverting the matrix:\[
\left(\!\!
\begin{array}{c}
\rho_0
\\
\kappa_0
\\
\zeta_0
\\
\overline{\kappa_0}
\\
\overline{\zeta_0}
\end{array}
\!\!\right)
=
\left(\!\!
\begin{array}{ccccc}
\frac{1}{{\sf c}\overline{\sf c}} & 0 & 0 & 0 & 0
\\
\frac{-{\sf b}}{{\sf c}^2\overline{\sf c}}
& \frac{1}{\sf c} & 0 & 0 & 0
\\
\frac{{\sf b}{\sf e}-{\sf c}{\sf d}}{{\sf c}^2
\overline{\sf c}{\sf f}} & -\,\frac{\sf e}{{\sf c}{\sf f}}
& \frac{1}{\sf f} & 0 & 0
\\
\frac{-\overline{\sf b}}{{\sf c}\overline{\sf c}^2} &
0 & 0 & \frac{1}{\overline{\sf c}} & 0
\\
\frac{\overline{\sf b}\overline{\sf e}
-\overline{\sf c}\overline{\sf d}}{{\sf c}
\overline{\sf c}^2 \overline{\sf f}} &
0 & 0 & -\frac{\overline{\sf e}}{\overline{\sf c}
\overline{\sf f}} &
\frac{1}{\overline{\sf f}}
\end{array}
\!\!\right)
\left(\!\!
\begin{array}{c}
\rho
\\
\kappa
\\
\zeta
\\
\overline{\kappa}
\\
\overline{\zeta}
\end{array}
\!\!\right),
\]
we find how the $\{\}_0$-indexed forms express in terms of the lifted
complete forms:

\begin{equation}
\label{eq:change}
\begin{aligned}
\rho_0
&
=
\frac{1}{{\cc} \overline{\sf c}}\,\rho,
\\
\kappa_0
&
=
-\,\frac{\sf b}{{\sf c}^2 \cb}\,\rho
+
\frac{1}{\sf c}\,\kappa,
\\
\zeta_0
&
=
\frac{{\sf b}{\sf e}-{\sf c}{\sf d}}{\cc^2 \cb
{\sf f}}\,\rho
-
\frac{{\sf e}}{{\sf c}{\sf f}}\,\kappa
+
\frac{1}{\sf f}\,\zeta,
\\
\overline{\kappa_0}
&
=
-\,\frac{\overline{\sf b}}{{\sf c}\cb^2}\,
\rho
+
\frac{1}{\cb}\,\overline{\kappa},
\\
\overline{\zeta_0}
&
=
\frac{\overline{\sf b}{\overline{\sf e}}
-\overline{\sf c}\overline{\sf d}}{{\sf c}
\cb^2 \overline{\sf f}}\,\rho
-
\frac{\overline{\sf e}}{\overline{\sf c}\overline{\sf f}}\
\overline{\kappa}
+
\frac{1}{\overline{\sf f}}\,\overline{\zeta}.
\end{aligned}
\end{equation}

\section{Absorption of torsion and normalization:
first loop} \label{section:step1}
\subsection{Lifted structure equations}
We apply the Cartan's method as explained in ~\cite{ Olver-1995}.
The first step is to compute the structure equations for the lifted coframe.
With the matrix notations 
\begin{alignat*}{1}
\omega_{0} :=
\begin{pmatrix}
\rho_0 \\ \kappa_{0} \\ \zeta_{0} \\ \ov{\kappa}_{0} \\ \ov{\zeta}_0
\end{pmatrix}, & \qquad
\omega:=
\begin{pmatrix}
\rho \\ \kappa \\ \zeta \\ \ov{\kappa} \\ \ov{\zeta}
\end{pmatrix},
\end{alignat*}
we have 
\begin{equation*}
\omega = g \cdot \omega_{0}.
\end{equation*}
As a result, the structure equations for the lifted coframe are related to those of the base coframe by the relation:
\begin{equation} 
\label{eq:str}
d \omega =  dg \cdot g^{-1} \wedge \omega + g \cdot d \omega_{0}.
\end{equation}
The term $ dg \cdot g^{-1} \wedge \omega$ depends only on the structure equations of $G_1$ and is expressed through its Maurer-Cartan forms.
The term $ g \cdot d \omega_{0}$  contains the so-called torsion coefficients of the $G_1$-structure. It is computed easily in terms of the forms 
$\rho$, $\kappa$, $\zeta$, $\ov{\kappa}$, $\ov{\zeta}$, by 
applying the linear change (\ref{eq:change}) in the expression of $d \omega_0$, which is given by the set of equations (\ref{eq:streq}), 
and a matrix multiplication by $g$.

We start with the expression of the Maurer-Cartan forms of $G_1$. They are given by the
linear independant entries of the matrix $dg \cdot g^{-1}$.
An easy computation gives:
\begin{equation*}
dg \cdot g^{-1} =
\begin{pmatrix}
\alpha^1 + \ov{\alpha^1} & 0 & 0 & 0 & 0 \\
\alpha^2 & \alpha^1  & 0 &0 & 0 \\
\alpha^3 & \alpha^4 & \alpha^5 & 0 & 0 \\
\ov{\alpha^2} & 0 & 0 & \ov{\alpha^1} & 0\\
\ov{\alpha^3} & 0 & 0 & \ov{\alpha^4} & \ov{\alpha^5} \\
\end{pmatrix}
,
\end{equation*}
where
\begin{equation*}
\begin{aligned}
\alpha^1  &: = \frac{ d \cc}{\cc}, \\
\alpha^2 &:= \frac{ d \bb}{\cc \cb} - \frac{\bb \, d \cc}{\cc^2}{\cb}, \\
\alpha^3  &:= \frac{ d \dd}{\cc \cb} - \frac{\bb \,  d \ee}{\cc^2 \cb} + \frac{\left( -\dd \cc + \ee \bb \right) d {\sf f}}{\cc^2 \cb {\sf f}}, \\
\alpha^4 &:= \frac{d \ee}{\cc} - \frac{\ee \, d {\sf f}}{\cc \sf f}, \\
\alpha^5 &:= \frac{d {\sf f}}{\sf f}.
\end{aligned}
\end{equation*}

The next step is to express the structure equations of the lifted coframe from equation (\ref{eq:str}) as explained above.
Rather lenghty but straigtforward computations give:
\begin{dgroup*}
\begin{dmath*}
d \rho  =
\alpha^1 \wedge \rho + \overline{\alpha^{1}} \wedge \rho \\
+
T^{\rho}_{\rho \kappa} \, \rho \wedge \kappa
+
T^{\rho}_{\rho \zeta} \, \rho \wedge \zeta
+
T^{\rho}_{\rho \overline{\kappa}} \, \rho \wedge \overline{\kappa}
+
T^{\rho}_{\rho \overline{\zeta}} \, \rho \wedge \overline{\zeta}
+
i \, \kappa \wedge \overline{\kappa}
,\end{dmath*}
\begin{dmath*}
d \kappa
=
\alpha^{1} \wedge \kappa + \alpha^{2} \wedge \rho \\
+
T^{\kappa}_{\rho \kappa} \, \rho \wedge \kappa
+
T^{\kappa}_{\rho \zeta} \, \rho \wedge \zeta
+
T^{\kappa}_{\rho \overline{\kappa}} \, \rho \wedge \overline{\kappa}
+
T^{\kappa}_{\rho \overline{\zeta}} \, \rho \wedge \overline{\zeta}
+
T^{\kappa}_{\kappa \zeta} \, \kappa \wedge \zeta 
+
T^{\kappa}_{\kappa \overline{\kappa}} \, \kappa \wedge \overline{\kappa}
+
T^{\kappa}_{\zeta \overline{\kappa}} \, \zeta \wedge \overline{\kappa},
\end{dmath*}
\begin{dmath*}
d \zeta
=
\alpha^{3} \wedge \rho + \alpha^{4} \wedge \kappa + \alpha^{5} \wedge \zeta \\
+
T^{\zeta}_{\rho \kappa} \, \rho \wedge \kappa
+
T^{\zeta}_{\rho \zeta} \, \rho \wedge \zeta
+
T^{\zeta}_{\rho \overline{\kappa}} \, \rho \wedge \overline{\kappa}
+
T^{\zeta}_{\rho \overline{\zeta}} \,\left. \rho \wedge \overline{\zeta} \right.
+
T^{\zeta}_{\kappa \zeta} \, \kappa \wedge \zeta 
+
T^{\zeta}_{\kappa \overline{\kappa}} \, \kappa \wedge \overline{\kappa}
+
T^{\zeta}_{\zeta \overline{\kappa}} \, \zeta \wedge \overline{\kappa},
\end{dmath*}
\end{dgroup*}
where the expressions of the torsion coefficients $T^{\smallbullet}_{\smallbullet \smallbullet}$ are given by the following equations: 
\[
T^{\rho}_{\rho \kappa} = i\, {\frac { \overline{\sf b}}{{\sf c}\overline{ \sf c}}} + {\frac {\sf e  }{{\sf c}{ \sf f}}}\, 
\Lk + {\frac {P}{\sf c}},
\]

\[
T^{\rho}_{\rho \zeta} = -{\frac {\Lk }{\sf f}},
\]

\[
T^{\rho}_{\rho \overline{\kappa}}
=
-i \, {\frac {\sf b}{{\sf c} \overline{ \sf c}}}
+
{\frac {\overline{\sf e}}{\overline{ \sf c}\overline{\sf f}}} \, \Lbkb + {\frac {\overline{P}
}{\overline{ \sf c}}},
\]

\[
T^{\rho}_{\rho \overline{\zeta}} = -\frac {\Lbkb }{\overline{\sf f}}
,\]

\[
T^{\kappa}_{\rho \kappa}
=
-
\frac {\ee }{\cc \cb \ff} \, \Tk
-
\frac {
\overline{\sf b} \ee }{{\sf c} \overline{ \sf c}^2{\sf f}} \, \Lbk
-
\frac {{\sf d} }{{\sf c}\overline{ \sf c}{\sf f}}\, \Lk +
 i\, {\frac
{{\sf b}\overline{\sf b}}{{\sf c}^{2} \overline{ \sf c} ^{2}}}
+
{\frac {{\sf b} {\sf e}}{{\sf c}^{2}\overline{ \sf c}{\sf f}}}\, \Lk
 +
 {\frac {{\sf b}}{{\sf c}^{2}
\overline{ \sf c}}}\,P
,\]

\[
T^{\kappa}_{\rho \zeta}
=
{\frac {\overline{\sf b}}{
\overline{ \sf c} ^{2}{\sf f} }}\,\Lbk - \frac{1}{\cb \ff} \, \Tk 
,\]

\[
T^{\kappa}_{\rho \overline{\kappa}}
=
-\frac {\sf d }{\overline{ \sf c}
  ^2 \ff }\,\Lbk
 +
 \frac {{\sf b} {\sf e}}{{\sf c} 
\overline{ {\sf c}} ^2 \ff}\,\Lbk 
-
i\, {\frac {{\sf b}^{2}}{{\sf c}^{2} 
\overline{ \sf c} ^{2}}}
+
{\frac {{\sf b}\overline{\sf e} }{{\sf c} \overline{ \sf c} ^{2}\overline{\sf f}}\, \Lbkb
}
+
\frac {{\sf b}}{{\sf c} \overline{ \sf c} ^2}\,\overline{P}
,\]

\[
T^{\kappa}_{\rho \overline{\zeta}}
=
-\frac {  \sf b}{{\sf c}\overline{ \sf c}
\overline{\sf f}} \,\Lbkb \]

\[
T^{\kappa}_{\kappa \zeta}
=
-{\frac {\Lk }{\sf f}}
,\]

\[
T^{\kappa}_{\kappa \overline{\kappa}}
=
-{\frac {\sf e}{\overline{ \sf c} \ff }}\,\Lbk 
+
i\, {\frac {\sf b}{
{\sf c}\overline{ \sf c}}}
,\]

\[
T^{\kappa}_{\zeta \overline{\kappa}}
=
\frac {{\sf c}}{\overline{ \sf c} \ff}\,\Lbk 
,\]

\[
T^{\zeta}_{\rho \kappa}
=
-
\frac {\ee^2  }{{\sf c}^2 \overline{ \sf c} \ff} \Tk
-
\frac { \bb {\sf 
e}^{2}}{{\sf c}^{2}
\overline{ \sf c}  ^2 \ff} \, \Lbk 
+
i\, {\frac {\overline{\sf b} \dd}{{\sf c}^{2} 
\overline{ \sf c} ^2}}
+
{\frac {\sf d}{{\sf c}^{2}\overline{ \sf c}}}\,P
,\]

\[
T^{\zeta}_{\rho \zeta}
=
- \frac{\ee}{\cc \cb \ff} \, \Tk
+
{\frac { {\sf b}\overline{\sf e}}{{\sf c} 
\overline{ \sf c} ^{2} \ff}}\, \Lbk 
+
{\frac { {\sf b} {\sf e}}{{\sf c}^{2}
\overline{ \sf c}{\sf f}}} \, \Lk
-
{\frac {{ \sf d} }{{\sf c}\overline{ \sf c}{\sf f}}}\, \Lk
,\]

\[
T^{\zeta}_{\rho \overline{\kappa}}
=
-
{\frac { {\sf d}{ \sf e}}{{\sf c}  \overline{ \sf c}
 ^{2}\ff}}\, \Lbk
 +
{\frac { \bb {\sf e}^{2}}{{\sf c}^{2
} \overline{ \sf c}  ^{2} \ff}} \, \Lbk
-
i\,{\frac {{\sf b}{\sf d}}{{\sf c}^{2} 
\overline{ \sf c}  ^{2}}}
+
{\frac {{ \sf d}\overline{\sf e}}{{\sf c}  \overline{ \sf c}^{2}\overline{\sf f}} \, \Lbkb
}
+
{\frac {{ \sf d}}{{\sf c}  \overline{ \sf c}  ^{2}}}\overline{P}
,\]

\[
T^{\zeta}_{\rho \overline{\zeta}}
=
-{\frac {{ \sf d}}{{\sf c}\overline{ \sf c}
\overline{\sf f}}}\,\Lbkb
,\]

\[
T^{\zeta}_{\kappa \zeta}
=
-{\frac {{ \sf e}}{{\sf c}\overline{ \sf c}
\overline{\sf f}}}\,\Lk 
,\]

\[
T^{\zeta}_{\kappa \overline{\kappa}}
=
-
{\frac {{\sf e}^{2}}{{\sf c}\overline{ \sf c}{\sf f}}}\,\Lbk
+
{
i\,\frac {\sf d}{{\sf c}\overline{ \sf c}}}
,\]

\[
T^{\zeta}_{\zeta \overline{\kappa}}
=
{\frac { {\sf e}}{\overline{ \sf c} \ff}}\,\Lbk
.\]

\subsection{Normalization of the group parameter $\ff$}
We now proceed with the absorption step of Cartan's method. 
We introduce the modified Maurer-Cartan forms $\tilde{\alpha}^i$, 
which are a related to the $1$-forms $\alpha^i$ by the relations:
\begin{equation*}
\tilde{\alpha}^i := \alpha^i -  x_{\rho}^i \, \rho \, -  x_{\kappa}^i \, \kappa - x_{\zeta}^i \, \zeta \, - \, x_{\overline{\kappa}}^i \, \overline{\kappa} \, - \, 
x_{\overline{\zeta}}^i \, \overline{\zeta},
\end{equation*}
where $x^1$, $x^2$, $x^3$, $x^4$ and $x^5$ are arbitrary complex-valued functions.
The previously written structure equations take the new form:
\begin{dgroup*}
\begin{dmath*}
 d \rho = \tilde{\alpha}^1 \wedge \rho + \overline{\tilde{\alpha}^1} \wedge \rho\\
+ \left( T_{\rho \kappa}^{\rho}- x_\kappa^1 - x_{\overline{\kappa}}^1 \right)  \rho \wedge \kappa \, + \, \left(T_{\rho \zeta}^{\rho} - x_{\kappa}^1 - 
\overline{  x_{\overline{\zeta}}^1} \right)  \rho \wedge \zeta  \,  + \, \left(T_{\rho \overline{\kappa}}^{\rho} - x_{\overline{\kappa}}^1-    
 \overline{ x_{\kappa}^1} \right)  \rho \wedge \overline{\kappa}  \, + \, \left( T_{\rho \overline{\zeta}}^{\rho} - x_{\zeta}^1 - x_{\overline{\zeta}}^1 \right)
 \rho \wedge \overline{\zeta}  + i \, \kappa \wedge \overline{\kappa},
\end{dmath*}
\begin{dmath*}
d \kappa = \tilde{\alpha}^{1} \wedge \kappa + \tilde{\alpha}^{2} \wedge \rho \\
+ \, \left(T_{\rho \kappa}^{\kappa} - x_\kappa^2 + x_\rho^1     \right)  \rho \wedge \kappa \, + \, \left(T_{\rho \zeta}^{\kappa} - x_{\kappa}^2 \right) \,
\rho \wedge \zeta \,  + \, \left( T_{\rho \overline{\kappa}}^{\kappa} - x_{\overline{\kappa}}^2 \right)  \rho \wedge \overline{\kappa}  + 
\left( T^{\kappa}_{\rho \overline{\zeta}} - x_{\overline{\zeta}}^2 \right)  \rho \wedge \overline{\zeta} \, + \, \left( T_{\kappa \zeta}^{\kappa} + x_{\zeta}^1 
\right)\, \kappa \wedge \zeta \,  + \, \left( T_{\kappa \overline{\kappa}}^{\kappa} - x_{\overline{\kappa}}^1 \right)  \kappa \wedge \overline{\kappa} \, + \, 
T_{\zeta \overline{\kappa}}^{\kappa} \,  \zeta \wedge \overline{\kappa} \, + \, \left( T_{\kappa \overline{\zeta}}^1 - x_{\kappa \overline{\zeta} }^1 \right) \,
\kappa \wedge \zeta,
\end{dmath*}
\begin{dmath*}
 d \zeta = \tilde{\alpha}^3 \wedge \rho + \tilde{\alpha}^4 \wedge \kappa + \tilde{\alpha}^5 \wedge \zeta  \\
+ \left( T_{\rho \kappa}^{\zeta}  - x_{\kappa}^3 + x_{\rho}^4 \right)  \rho \wedge \kappa  +  \left( T_{\rho \zeta}^{\zeta} - x_{\zeta}^3 + x_{\rho}^5 
\right)  \rho \wedge \zeta  + \left(T_{\rho \kappa}^{\zeta} - x_{\overline{\kappa}}^3  \right) \left. \rho \wedge \overline{\kappa} \right.  + 
\left( T_{\rho \overline{\zeta}}^{\zeta} -x_{\overline{\zeta}}^3 \right)  \rho \wedge \overline{\zeta}   \,  + \, \left( T_{\kappa \overline{\kappa}}^{\zeta} - 
x_{\overline{\kappa}}^4 \right)  \kappa \wedge \overline{\kappa}  + 
\left( T_{\zeta 
\overline{\kappa}}^{\zeta} - x_{\overline{\kappa}}^5 \right)  \zeta \wedge \overline{\kappa} + \left( x_{\kappa}^5 - x_{\zeta}^4 \right)  \kappa \wedge \zeta - x_
{\overline{\kappa}}^4  \,
 \kappa \wedge \overline{\kappa} \, + \, \left(x_{\overline{\kappa}}^5 - x_{\overline{\zeta}}^4 \right)  \overline{\kappa} \wedge \zeta - x_{\overline{\zeta}}^5 
 \, \zeta \wedge \overline{\zeta}.
\end{dmath*}
\end{dgroup*}

We then choose  $x^1$, $x^2$, $x^3$, $x^4$ and $x^5$ in a way that eliminate as many torsion coefficients as possible.
We easily see that the only coefficient which can not be absorbed is the one in front of $\zeta \wedge \ov{\kappa}$ in $d \kappa$, because it does not depend 
on the $x^i$'s.
We choose the normalization 
\begin{equation*}
T_{\zeta \overline{\kappa}}^{\kappa} = 1,
\end{equation*}
which yields to:
\begin{equation*}
{\sf f} = \frac{\cc}{\cb} \, \overline{ \mathcal{L}_1}(k).
\end{equation*}
We notice that the absorbed structure equations take the form:
\begin{align*}
d \rho & = \tilde{\alpha}^1 \wedge \rho + \overline{\tilde{\alpha}^1} \wedge \rho + i \, \kappa \wedge \overline{\kappa}, \\
d \kappa & =  \tilde{\alpha}^{1} \wedge \kappa + \tilde{\alpha}^{2} \wedge \rho  + \zeta \wedge \overline{\kappa}, \\
d \zeta &  = \tilde{\alpha}^3 \wedge \rho + \tilde{\alpha}^4 \wedge \kappa + \tilde{\alpha}^5 \wedge \zeta. 
\end{align*}
As a preliminary step towards the second loop of the algorithm, we return to the expression of the lifted coframe. The normalization of ${\sf f}$ 
gives the new relation:
\begin{equation}
\begin{pmatrix}
\rho \\
\kappa \\
\zeta \\
\overline{\kappa}\\
\overline{\zeta} 
\end{pmatrix}
=
\begin{pmatrix}
{\sf c} \overline{\sf c} & 0 &0 &0 &0 \\
{\sf b} & {\sf c} & 0 & 0 & 0 \\
{\sf d} & {\sf e} & \frac{\sf c}{\overline{\sf c}} \overline{ \mathcal{L}_{1}}(k) & 0 & 0 \\
{\sf \overline{b}} & 0 & 0 & {\sf \overline{c}} & 0 \\
0 & 0 & {\sf \overline{d}} & {\sf \overline{e}} & \frac{\sf c}{\sf \overline{c}} \overline{ \mathcal{L}_{1}}(k)
\end{pmatrix}
\cdot
\begin{pmatrix}
\rho_0 \\
\kappa_0 \\
\zeta_0 \\
\overline{\kappa}_0 \\
\overline{\zeta}_0
\end{pmatrix}
.\end{equation}

Let us interpret this in the framework of $G$-structures.  We introduce the new one-form 
\begin{equation}
\hat{\zeta}_{0} =  \overline{ \mathcal{L}_{1}}(k) \cdot \zeta_0
,\end{equation}
such that the previous equation rewrites:
\begin{equation}
\begin{pmatrix}
\rho \\
\kappa \\
\zeta \\
\overline{\kappa}\\
\overline{\zeta} 
\end{pmatrix}
=
\begin{pmatrix}
{\sf c} \overline{\sf c} & 0 &0 &0 &0 \\
{\sf b} & {\sf c} & 0 & 0 & 0 \\
{\sf d} & {\sf e} & \frac{\sf c}{\overline{\sf c}} & 0 & 0 \\
{\sf \overline{b}} & 0 & 0 & {\sf \overline{c}} & 0 \\
0 & 0 & {\sf \overline{d}} & {\sf \overline{e}} & \frac{\sf c}{\sf \overline{c}}
\end{pmatrix}
\cdot
\begin{pmatrix}
\rho_0 \\
\kappa_0 \\
\hat{\zeta}_0 \\
\overline{\kappa}_0 \\
\overline{\hat{\zeta}}_0
\end{pmatrix}.
\end{equation}

We thus have reduced the $G_{1}$ equivalence problem to a $G_2$ equivalence problem, where $G_2$ is the $8$ dimensional real matrix 
Lie group whose elements are of the form
\begin{equation*}
g = \begin{pmatrix}
{\sf c} \overline{\sf c} & 0 &0 &0 &0 \\
{\sf b} & {\sf c} & 0 & 0 & 0 \\
{\sf d} & {\sf e} & \frac{\sf c}{\overline{\sf c}} & 0 & 0 \\
{\sf \overline{b}} & 0 & 0 & {\sf \overline{c}} & 0 \\
0 & 0 & {\sf \overline{d}} & {\sf \overline{e}} & \frac{\sf c}{\sf \overline{c}}
\end{pmatrix}.
\end{equation*}
The last task that we need to perform before the second loop of the algorithm is to compute the new structures equations enjoyed by the base coframe 
$(\rho_0, \kappa_0, \hat{\zeta}_0, \kappa_0, \overline{\hat{\zeta}_0})$.
We easily get:
\begin{dgroup*}
\begin{dmath*}
d \rho_{0} = P \, \rho_{0} \wedge \kappa_{0} - \frac{\mathcal{L}_{1}(k)}{\overline{\mathcal{L}_1}(k)} \, \rho_{0} \wedge \hat{\zeta}_0 +\,  \overline{P} \, 
\rho_0 \wedge \overline{\kappa_0} -\frac{ \overline{\mathcal{L}_1}(\overline{k})}{\mathcal{L}_1(\overline{k})} \, \rho_0 \wedge \overline{\hat{\zeta}_{0}} +
 i\, \kappa_0 \wedge \overline{\kappa_0},
\end{dmath*}
\begin{dmath*}
d\kappa_{0} =- \frac{\Tk}{\Lbk} \, \rho_0 \wedge \hat{\zeta}_0 - \frac{\Lk}{\Lbk} \, \kappa_0 \wedge \hat{\zeta}_0  + \hat{\zeta}_{0} \wedge \ov{\kappa_{0}} 
,\end{dmath*}

\begin{dmath*}
d \hat{\zeta}_0 = \frac{\mathcal{T} \left( \Lbk \right)}{\Lbk} \, \rho_0 \wedge \hat{\zeta}_0 + \frac{\LL \left( \Lbk \right)}{\Lbk} \, \kappa_0 \wedge 
\hat{\zeta}_0 - \frac{\LL \left( \Lbk \right)}{\Lbk} \, \hat{\zeta}_0 \wedge \ov{\kappa_0
}+ \frac{\Lbkb}{\Lkb}\,  \hat{\zeta}_0 \wedge \ov{\hat{\zeta}_0}
\end{dmath*}.
\end{dgroup*}

\section{Absorption of torsion and normalization: second loop} \label{section:step2}
\subsection{Lifted structure equations}
The Maurer forms of the $G_{2}$ are given by the independant entries of the matrix $d g \cdot g^{-1}$. A straightforward computation gives
\begin{equation*}
 dg \cdot g^{-1}
= 
\begin{pmatrix}
\beta^1 + \ov{\beta^1} & 0 &0 &0 &0 \\
\beta^2 & \beta^1 & 0 & 0& 0 \\
\beta^3 & \beta^4 & \beta^1 - \ov{\beta^1} & 0 &0 \\
\ov{\beta^2} & 0 & 0 & \ov{\beta^1} & 0 \\
\ov{\beta^3} & 0 & 0 &  \ov{\beta^4} & - \beta^1 + \ov{\beta^1}
\end{pmatrix}
,\end{equation*}

where the forms $\beta^1$, $\beta^2$, $\beta^3$ and $\beta^4$ are defined by
\begin{gather*}
\beta^1 : = \frac{d {\sf c}}{\sf c}, \\
\beta^2 := \frac{d \bb}{\cc \cb}- \frac{\bb d \cc}{\cc^2  \cb}, \\
\beta^3 := \frac{\left( - \dd \cc + \ee \bb \right) d \cc}{\cc^3 \cb} - \frac{\left(- \dd \cc + \ee \bb \right) d \cb}{\cc^2 \cb^2} + \frac{d \dd}{\cc \cb} -
 \frac{\bb d \ee}{\cc^2 \cb}, \\
\beta^4 := - \frac{\ee d \cc}{\cc^2} + \frac{\ee d \cb}{\cb \cc} + \frac{d \ee}{\cc }.
\end{gather*}
Using formula (\ref{eq:str}), we get the structure equations for the lifted coframe $(\rho, \kappa, \zeta, \ov{\kappa}, \ov{\zeta})$ from those of the base coframe
$(\rho_0, \kappa_0, \hat{\zeta}_0, \ov{\kappa_0}, \ov{\hat{\zeta}_0})$ by a matrix multiplication and a linear change of coordinates, as in the first loop:
\begin{dgroup*}
\begin{dmath*}
d \rho
=
\beta^{1} \wedge \rho + \overline{\beta^{1}} \wedge \rho \\
+
U^{\rho}_{\rho \kappa} \, \rho \wedge \kappa
+
U^{\rho}_{\rho \zeta} \, \rho \wedge \zeta
+
U^{\rho}_{\rho \overline{\kappa}} \, \rho \wedge \overline{\kappa}
+
U^{\rho}_{\rho \overline{\zeta}} \, \rho \wedge \overline{\zeta}
+
i \, \kappa \wedge \overline{\kappa}
,\end{dmath*}

\begin{dmath*}
d \kappa
=
\beta^{1} \wedge \kappa + \beta^{2} \wedge \rho \\
+
U^{\kappa}_{\rho \kappa} \, \rho \wedge \kappa
+
U^{\kappa}_{\rho \zeta} \, \rho \wedge \zeta
+
U^{\kappa}_{\rho \overline{\kappa}} \, \rho \wedge \overline{\kappa}
\\ 
+
U^{\kappa}_{\rho \overline{\zeta}} \, \rho \wedge \overline{\zeta}
+
U^{\kappa}_{\kappa \zeta} \, \kappa \wedge \zeta
+
U^{\kappa}_{\kappa \overline{\kappa}} \, \kappa \wedge \overline{\kappa}
+
\zeta \wedge \overline{\kappa}
,\end{dmath*}

\begin{dmath*}
d \zeta
=
\beta^{3} \wedge \rho + \beta^{4} \wedge \kappa + \beta^{1} \wedge \zeta - \overline{\beta^{1}} \wedge \zeta \\
+
U^{\zeta}_{\rho \kappa} \, \rho \wedge \kappa
+
U^{\zeta}_{\rho \zeta} \, \rho \wedge \zeta
+
U^{\zeta}_{\rho \overline{\kappa}} \, \rho \wedge \overline{\kappa}
+
U^{\zeta}_{\rho \overline{\zeta}} \, \rho \wedge \overline{\zeta}
+
U^{\zeta}_{\kappa \zeta} \, \kappa \wedge \zeta
+
U^{\zeta}_{\kappa \overline{\kappa}} \, \kappa \wedge \overline{\kappa}
+
U^{\zeta}_{\kappa \overline{\zeta}} \, \kappa \wedge \overline{\zeta}
+
U^{\zeta}_{\zeta \overline{\kappa}} \, \zeta \wedge \overline{\kappa}
+
U^{\zeta}_{\zeta \overline{\zeta}} \, \zeta \wedge \overline{\zeta}
.\end{dmath*}
\end{dgroup*}

The torsion coefficients $U^{\smallbullet}_{\smallbullet \smallbullet}$ are given by:
\[
U^{\rho}_{\rho \kappa} = i\,  {\frac {\overline{\sf b}}{{\sf c}\overline{ \sf c}}}
+
{\frac { {\sf e}\overline{ \sf c}}{{\sf c}^{2}}}\, \frac{\Lk }{\Lbk }
 +
 {\frac {P}{\sf c}}
,\]

\[
U^{\rho}_{\rho \zeta} = -
{\frac {\overline{ \sf c}}{{\sf c}}}\, \frac{\Lk }{\Lbk}
,\]

\[
U^{\rho}_{\rho \overline{\kappa}}
=
-i {\frac {{\sf b}}{{\sf c}\overline{ \sf c}}}
+
{\frac {\overline{\sf e}{\sf c}}{  \overline{ \sf c}^2}} \, \frac{\Lbkb}{\Lkb }
+
{\frac {\overline{P}}{\overline{ \sf c}}}
,\]

\[
U^{\rho}_{\rho \overline{\zeta}} = 
-
{\frac {{\sf c}}{\overline{ \sf c}}}\, \frac{\Lbkb }{\Lkb }
,\]

\[
U^{\kappa}_{\rho \kappa}
=
-
\frac{\ee}{\cc^2} \, \frac{\Tk}{\Lbk}
-
{\frac { {\sf e}
\overline{\sf b}}{{\sf c}^{2}\overline{ \sf c}}}
-
{\frac {{ \sf d}}{{\sf c}^
{2}}} \, \frac{\Lk}{\Lbk }
+
i\, {\frac {{\sf b}\overline{\sf b}}{{\sf c}^{2}
  \overline{ \sf c}^2}}
  +
  {\frac {{\sf b} {\sf e}}{{\sf c}
^{3}}}\, \frac{\Lk }{\Lbk }
+
{\frac {\sf b}{{\sf c}^{2}\overline{ \sf c}}}\, P
,\]

\[
U^{\kappa}_{\rho \zeta}
=
{\frac {\overline{\sf b}}{{\sf c}\overline{ \sf c}}}
- \frac{1}{\cc} \, \frac{\Tk}{\Lbk},\]

\[
U^{\kappa}_{\rho \overline{\kappa}}
=
-{\frac {\sf d}{{\sf c}\overline{ \sf c}}}
+
{\frac { {\sf e}{\sf b}}{{\sf c}^{2}\overline{ \sf c}}}
-
i\, {\frac {
{\sf b}^{2}}{{\sf c}^{2}  \overline{ \sf c}^2}}
+
{\frac {{\sf b}
\overline{\sf e}}{  
\overline{ \sf c} ^{3}}}\, \frac{\Lbkb}{\Lkb }
+
{\frac {\sf b
}{{\sf c}  \overline{ \sf c}^2}}\, \overline{P}
,\]

\[
U^{\kappa}_{\rho \overline{\zeta}}
=
-{\frac {\sf b}{ 
\overline{ \sf c}^{2}}}\, \frac{\Lbkb}{\Lkb }
,\]

\[
U^{\kappa}_{\kappa \zeta}
=
-{\frac {\overline{ \sf c}}{{\sf c}}}\, \frac{\Lk }{\Lbk }
,\]

\[
U^{\kappa}_{\kappa \overline{\kappa}}
=
-{\frac {\sf e}{\sf c}}+i\, {\frac {{\sf b}}{{\sf c}\overline{ \sf c}}}
,\]

\begin{dmath*}
U^{\zeta}_{\rho \kappa}
=
{\frac {{ \sf d}}{{\sf c}^{2}
\overline{ \sf c}}} \, \frac{\LL \left( \Lbk  \right) }{\Lbk }
-
{\frac { {\sf e}\overline{\sf d}
}{{\sf c} \overline{ \sf c}
 ^{2}}}\, \frac{\overline{\mathcal{L}_{1}} (\overline{k}) }{\mathcal{L}_{1} (\overline{k}) }
 +
 {\frac { {\sf e}\overline{\sf e}
\overline{\sf b}}{ 
\overline{ \sf c}  ^{3}{\sf c}}}\, \frac{\overline{\mathcal{L}_{1}}
 (k) }{\mathcal{L}_{1} (\overline{k}) }
+
{\frac { {\sf e}
\overline{\sf b}}{{\sf c}^{2}  \overline{ \sf c}^2}}\, \frac{\overline{\mathcal{L}_{1}} \left( \overline{\mathcal{L}_{1}} (k) 
 \right) }{\overline{\mathcal{L}_{1}}
 (k) }
 -
 {\frac { {\sf e} }{{\sf c}^{2}\overline{ \sf c}} \, \frac{\mathcal{T}\left( \overline{\mathcal{L}_{1}} \left( k
 \right)  \right) }{\overline{\mathcal{L}_{1}} \left( k \right) }
}
+
\frac{\ee^2}{\cc^3} \, \frac{\Tk}{\Lbk} 
-
{\frac {
{\sf e}^{2}\overline{\sf b}}{\overline{ \sf c}{\sf c}^{3}}}
+
i\, {\frac {{ \sf d}\overline{\sf b}}{{\sf c}
^{2}  \overline{ \sf c}^2}}
+
{\frac {\sf d}{{\sf c}^{2}
\overline{ \sf c}}} \, P
,\end{dmath*}

\begin{multline*}
U^{\zeta}_{\rho \zeta}
=
{\frac {\overline{\sf d}}{
  \overline{ \sf c}^2}} \, \frac{\overline{\mathcal{L}_{1}} (\overline{k}) }{\mathcal{L}_{1} (\overline{k}) }
  -
  {
\frac {\overline{\sf e} \overline{\sf b}}{  \overline{ \sf c}^{3}}}\, \frac{\overline{\mathcal{L}_{1}} (\overline{k}) }{\mathcal{L}_{1}( \overline{k}) }
 -
 {\frac {\overline{\sf b}}{{\sf c}  \overline{ \sf c}^2
}} \, \frac{\overline{\mathcal{L}_{1}} \left( \overline{\mathcal{L}_{1}}
 (k)  \right) }{\overline{\mathcal{L}_{1}} (k) }
-
{\frac {{\sf b}}{{\sf c}^{2}\overline{ \sf c}}} \frac{\mathcal{L}_{1} \left( \overline{\mathcal{L}_{1}}
 (k)  \right) }{\overline{\mathcal{L}_{1}} \left( k
 \right) }
\\ +
 {\frac{1 }{{\sf c}\overline{ \sf c}}} \, \frac{\T \left( \Lbk \right) }{\Lbk}
-
\frac{\ee}{\cc^2} \, \frac{\Tk}{\Lbk}
 +
 {\frac { {\sf e}
\overline{\sf b}}{{\sf c}^{2}\overline{ \sf c}}}
+
{\frac {{\sf b}{\sf e}}{{\sf c}
^{3}}}\, \frac{\mathcal{L}_{1} (k)}{\overline{\mathcal{L}_{1}} (k) }
-
{\frac {{ \sf d}}{{\sf c}
^{2}}}\, \frac{\mathcal{L}_{1} (k)}{\overline{\mathcal{L}_{1}} (k) }
,\end{multline*}

\begin{multline*}
U^{\zeta}_{\rho \overline{\kappa}}
=
2\,{\frac {\overline{\sf e}{ \sf d}}{
  \overline{ \sf c}^{3}}} \, \frac{\overline{\mathcal{L}_{1}} (\overline{k}) }{\mathcal{L}_{1} (\overline{k}) }
  -
  {\frac { {\sf e}\overline{\sf e} {\sf b}}{
  \overline{ \sf c}^{3}{\sf c}}} \,\frac{\overline{\mathcal{L}_{1}} (\overline{k}) }{\mathcal{L}_{1} (\overline{k}) }
 +
  {\frac {{ \sf d}}{{\sf c}
  \overline{ \sf c}^2}} \, \frac{\overline{\mathcal{L}_{1}} \left( \overline{\mathcal{L}_{1}} (k)  \right) }{\overline{\mathcal{L}_{1}} (k) } 
 -
  {\frac { {\sf e}{\sf b}}{
{\sf c}^{2}  \overline{ \sf c}^2 
}} \, \frac{\overline{\mathcal{L}_{1}} \left( \overline{\mathcal{L}_{1}} (k)  \right) }{\overline{\mathcal{L}_{1}} (k)}
\\ -
{\frac { {\sf e}{\sf d}}{{\sf c}^{2}\overline{ \sf c}}}
+ 
{\frac {{\sf e}^{2}{\sf b}}{\overline{ \sf c}{\sf c}
^{3}}}
-
i\, {\frac {{\sf d}{\sf b}}{{\sf c}^{2}  \overline{ \sf c}^2}}
+
{\frac 
{{ \sf d}}{{\sf c}  \overline{ \sf c}^2}} \, \overline{P}
,\end{multline*}

\[
U^{\zeta}_{\rho \overline{\zeta}}
=
-2\,{\frac { {\sf d} }{ 
\overline{ \sf c}^{2}}} \, \frac{\overline{\mathcal{L}_{1}} (\overline{k}) }{\mathcal{L}_{1} (\overline{k}) }
+
{\frac {{\sf e} {\sf b}
}{{\sf c } \overline{ \sf c}^{2}}} \, \frac{\overline{\mathcal{L}_{1}} (\overline{k}) }{\mathcal{L}_{1} (\overline{k}) }
,\]

\[
U^{\zeta}_{\kappa \zeta}
=
{\frac {1 }{\sf c
}} \, \frac{\mathcal{L}_{1} \left( \overline{\mathcal{L}_{1}} (k)  \right)}{\overline{\mathcal{L}_{1}} (k) }
-
{\frac { {\sf e}\overline{ \sf c}}{{\sf c}^{2}}} \, \frac{\mathcal{L}_{1} \left( k
 \right) }{\overline{\mathcal{L}_{1}} (k) }
,\]

\[
U^{\zeta}_{\kappa \overline{\kappa}}
=
{\frac { {\sf e}\overline{\sf e} }{ \overline{ \sf c}^2
 }} \,\frac{\overline{\mathcal{L}_{1}} (\overline{k})}{\mathcal{L}_{1} (\overline{k}) }
  +
{\frac { {\sf e}}{
\overline{ \sf c}{\sf c}}}\,\frac{\overline{\mathcal{L}_{1}} \left( \overline{\mathcal{L}_{1}} (k)  \right) }{\overline{\mathcal{L}_{1}} (k) }
-
{\frac {{\sf e}^{2}}{{\sf c}^{2}
}}
+
i\, {\frac {\sf d}{{\sf c}\overline{ \sf c}}}
,\]

\[
U^{\zeta}_{\kappa \overline{\zeta}}
=
- {\frac { {\sf e}}{\overline{ \sf c}}}\,\frac{\overline{\mathcal{L}_{1}} (\overline{k}) }{\mathcal{L}_{1}
 (\overline{k}) }
,\]

\[
U^{\zeta}_{\zeta \overline{\kappa}}
=
- {\frac {\overline{\sf e}{\sf c}}{
  \overline{ \sf c}^2}} \,\frac{\overline{\mathcal{L}_{1}} (\overline{k}) }{\mathcal{L}_{1} (\overline{k}) }
-
{\frac {1}{
\overline{ \sf c}}}\,\frac{\overline{\mathcal{L}_{1}} \left( \overline{\mathcal{L}_{1}} (k)  \right) }{\overline{\mathcal{L}_{1}} (k) }
+
{\frac {\sf e}{\sf c}}
,\]

\[
U^{\zeta}_{\zeta \overline{\zeta}}
=
{\frac {{\sf c}}{\overline{ \sf c}}}\, \frac{\overline{\mathcal{L}_{1}} (\overline{k}) }{\mathcal{L}_{1}
 (\overline{k}) }
.\]
\subsection{Normalization of the group parameter $\bb$}
We can now perform the absorption step. As for the first loop, we introduce the modified Maurer-Cartan forms $\tilde{\beta}^i$ which differ from the $\beta^i$ by 
a linear combination of the $1$-forms $\rho$, $\kappa$, $\zeta$, $\overline{\kappa}$, 
$\overline{\zeta}$, i.e. that is:
\begin{equation*}
\tilde{\beta}^i= \beta^i - y_{\rho}^i \, \rho \, -  y_{\kappa}^i \, \kappa - y_{\zeta}^i \, \zeta \, - \, y_{\overline{\kappa}}^i \, \overline{\kappa} \, - \, 
y_{\overline{\zeta}}^i \, \overline{\zeta}.
\end{equation*}
The structure equations rewrite:
\begin{dgroup*}
\begin{dmath*}
d \rho = \tilde{\beta}^1 \wedge \rho + \ov{\tilde{\beta}^1} \wedge \rho \\  +
 \left( U_{\rho \kappa}^{\rho}  - y_{\kappa}^1 - \ov{y}^1_{\ov{\kappa}} \right) \rho \wedge \kappa + 
\left( U_{\rho \zeta}^{\rho}  - y_{\zeta}^1 - \ov{y}^1_{\ov{\zeta}} \right) \rho \wedge \zeta +
 \left( U_{\rho \ov{\kappa}}^{\rho}  - y_{\ov{\kappa}}^1 - \ov{y}^1_{\kappa} \right) \rho \wedge \ov{\kappa} +
\left( U_{\rho \zeta}^{\rho}  - y_{\ov{\zeta}}^1 - \ov{y}^1_{\zeta} \right) \rho \wedge \zeta + i\, \kappa \wedge \ov{\kappa},
\end{dmath*}
\begin{dmath*}
d \kappa = \tilde{\beta}^1 \wedge \kappa + \tilde{\beta}^2 \wedge \rho \\
 + \left( U_{\rho \kappa}^{\kappa} + y^1_{\rho} - y^2_{\kappa} \right) \rho \wedge \kappa +  
\left( U_{\rho \zeta}^{\kappa}  - y^2_{\zeta} \right) \rho \wedge \zeta +  \left( U_{\rho \ov{\kappa}}^{\kappa}  - y^2_{\ov{\kappa}} \right) \rho \wedge \ov{\kappa} 
\\+
\left( U_{\rho \ov{\zeta}}^{\kappa}  - y^2_{\ov{\zeta}} \right) \rho \wedge \ov{\zeta} +
\left( U_{\kappa \zeta}^{\kappa}  - y^1_{\zeta} \right) \kappa \wedge \zeta +  
\left( U_{\kappa \ov{\kappa}}^{\kappa}  - y^1_{\ov{\kappa}} \right) \kappa \wedge \ov{\kappa} - y^1_{\ov{\zeta}} \, \kappa \wedge \ov{\zeta} + 
\zeta \wedge \ov{\kappa}
,\end{dmath*}
\begin{dmath*}
d \zeta = \tilde{\beta}^{3} \wedge \rho + \tilde{\beta}^{4} \wedge \kappa + \tilde{\beta}^{1} \wedge \zeta - \overline{\tilde{\beta}^{1}} \wedge \zeta \\
+ \left( U_{\rho \kappa}^{\zeta} - y^3_{\kappa} + y^4_{\rho} \right) \rho \wedge \kappa +  \left( U_{\rho \zeta}^{\zeta} - 
y^3_{\zeta} + y^1_{\rho} - \ov{y}^1_{\rho} \right) \rho \wedge \zeta +  \left( U_{\rho \ov{\kappa}}^{\zeta} - y^3_{\ov{\kappa}} \right) 
\left.\rho \wedge \ov{\kappa}\right.  +   
\left( U_{\kappa \zeta}^{\zeta} - y^4_{\zeta} + y^1_{\kappa}-\ov{y}^1_{\kappa} \right) \kappa \wedge \zeta +  \left( U_{\kappa \ov{ \kappa}}^{\zeta} -
 y^4_{\ov{\kappa}} \right) \kappa \wedge \ov{\kappa} + \left( U_{\kappa \ov{\zeta}}^{\zeta} - y^4_{\ov{\zeta}} \right) \kappa \wedge \ov{\zeta} +  
\left( U_{\zeta \ov{ \kappa}}^{\zeta} - y^1_{\ov{\kappa}} + \ov{y}^1_{\kappa} \right) \zeta \wedge \ov{\kappa} + 
\left( U_{\zeta \ov{\zeta}}^{\zeta} - y^1_{\ov{\zeta}} + \ov{y}^1_{\zeta} \right) \zeta \wedge \ov{\zeta}   
.\end{dmath*}
\end{dgroup*}

We get the following absorption equations:
\begin{alignat*}{3}
y_{\kappa}^1 + \ov{y}^1_{\ov{\kappa}} &= U_{\rho \kappa}^{\rho},  
& \qquad \qquad  y_{\zeta}^1 + \ov{y}^1_{\ov{\zeta}} &=  U_{\rho \zeta}^{\rho},
& \qquad \qquad   y_{\ov{\kappa}}^1 + \ov{y}^1_{\kappa} &=  U_{\rho \ov{\kappa}}^{\rho}, \\
y_{\ov{\zeta}}^1 + \ov{y}^1_{\zeta} & = U_{\rho \zeta}^{\rho}, 
& \qquad \qquad - y^1_{\rho} + y^2_{\kappa}& = U_{\rho \kappa}^{\kappa},  & 
\qquad \qquad   y^2_{\zeta} & =  U_{\rho \zeta}^{\kappa}, \\
 y^2_{\ov{\kappa}} &=U_{\rho \ov{\kappa}}^{\kappa},  & \qquad \qquad  
 y^2_{\ov{\zeta}}& = U_{\rho \ov{\zeta}}^{\kappa}, & \qquad \qquad 
 y^1_{\zeta} &= U_{\kappa \zeta}^{\kappa},  \\  
 y^1_{\ov{\kappa}} & =  U_{\kappa \ov{\kappa}}^{\kappa}, & \qquad \qquad
 y^1_{\ov{\zeta}} &=0, & \qquad \qquad
 y^3_{\kappa} - y^4_{\rho} & = U_{\rho \kappa}^{\zeta},  \\ 
y^3_{\zeta} - y^1_{\rho} + \ov{y}^1_{\rho} &= U_{\rho \zeta}^{\zeta}, & \qquad \qquad 
 y^3_{\ov{\kappa}} & = U_{\rho \ov{\kappa}}^{\zeta}, &\qquad \qquad   
 y^4_{\zeta} - y^1_{\kappa} + \ov{y}^1_{\kappa} &= U_{\kappa \zeta}^{\zeta}, \\
 y^4_{\ov{\kappa}} &= U_{\kappa \ov{ \kappa}}^{\zeta}, &\qquad \qquad 
 y^4_{\ov{\zeta}} &= U_{\kappa \ov{\zeta}}^{\zeta}, &\qquad \qquad   
y^1_{\ov{\kappa}} - \ov{y}^1_{\kappa} &= U_{\zeta \ov{ \kappa}}^{\zeta}, \\ 
y^1_{\ov{\zeta}} - \ov{y}^1_{\zeta} &= U_{\zeta \ov{\zeta}}^{\zeta}.    
\end{alignat*} 
Eliminating the $y^{\smallbullet}_{\smallbullet}$ among these equations leads to the following relations between the torsion coefficients:
\begin{align*}
U_{\rho \ov{\kappa}}^{\rho} & = \ov{U_{\rho \kappa}^{\rho}}, \\
U_{\rho \ov{\zeta}}^{\rho} & = \ov{U_{\rho \zeta}^{\rho}}, \\
U_{\rho \zeta}^{\rho}&  = U_{\kappa \zeta}^{\kappa},\\
U_{\zeta \ov{ \zeta}}^{\zeta}&  = - U_{\rho\ov{ \zeta}}^{\rho},   \\
2 \, U_{\kappa \ov{\kappa}}^{\kappa} & =  U_{\zeta \ov{\kappa}}^{\zeta} + U_{\rho \ov{\kappa}}^{\rho}. 
\end{align*}
We verify easily that the first four equations do not depend on the group coefficients and are already satisfied. However, the last one does depend on the group 
coefficients. It gives us the normalization of $\bb$ as it rewrites:
\begin{equation*}
\label{eq:nb}
\bb = - i \, \cb \ee + i \, \frac{\cc}{3} \left( \frac{\ov{\mathcal{L}_1}\left( \Lbk \right)}{\Lbk} - \ov{P} \right)
.\end{equation*} 
We now look at the new relation between the coframe $(\rho_0, \kappa_0 ,\hat{ \zeta_0} , \ov{\kappa_0}, \ov{\hat{\zeta}_0})$ and the lifted coframe 
$(\rho, \kappa, \zeta, \ov{\kappa}, \ov{\zeta})$, when one takes into account the normalization (\ref{eq:nb}).
Indded we have:
\begin{align*}
\rho & = \cc \cb \,\rho_0 \\
\kappa& = -i \, \ee \cb \, \rho_0 +  \cc \left( \kappa_0 + \frac{i}{3} \left( \frac{\ov{\mathcal{L}_1}\left( \Lbk \right)}{\Lbk} - \ov{P} \right) \rho_0 \right)\\
\zeta &= \dd \, \rho_0 + \ee \, \kappa_0 + \frac{\cc}{\cb} \, \hat{\zeta}_0.
\end{align*}
As in the first loop of the method, we modify the base coframe to get an interpretation of these equations as a $G$-structure.
Let us introduce: 
\begin{equation*}
\hat{\kappa}_0 : = \kappa_0 +  \frac{i}{3} \left( \frac{\ov{\mathcal{L}_1}\left( \Lbk \right)}{\Lbk} - \ov{P} \right) \rho_0.
\end{equation*}
The first two equations become 
\begin{equation*}
\rho = \cc \cb \,\rho_0 \quad \text{and} \quad  \kappa = -i \, \ee \cb \, \rho_0 + \cc \, \hat{\kappa}_0,
\end{equation*}
while the third one rewrites:
\begin{equation*}
\zeta = \left[ \dd - i \, \frac{\ee}{3} \left( \frac{\ov{\mathcal{L}_1}\left( \Lbk \right)}{\Lbk} - \ov{P} \right) \right]  \rho_0 + \ee \, \hat{\kappa}_0 + 
\frac{\cc}{\cb} \, \hat{\zeta}_0.
\end{equation*}
Let us introduce the new group parameter $\dd' := \dd -  i \, \frac{\ee}{3} \left( \frac{\ov{\mathcal{L}_1}\left( \Lbk \right)}{\Lbk} - \ov{P} \right)$.
We note that $\dd'$ describes $\mathbb{C}$ when $\dd$ describes $\mathbb{C}$.
We have thus reduced the problem to an equivalence of $G_3$-structure, 
described by the coframe 
$(\rho, \hat{\kappa}, \hat{\zeta}, \hat{\kappa}, \ov{\hat{\zeta}})$ and the relations:
\begin{equation*}
\begin{pmatrix}
\rho  \\
\kappa \\
\zeta \\
\ov{\kappa} \\
\ov{\zeta} \\
\end{pmatrix}
=
\begin{pmatrix}
\cc \cb & 0 & 0 & 0 & 0 \\
- i\, \ee \cb & \cc & 0 & 0 &0 \\
\dd^' & \ee & \frac{\cc}{\cb} & 0 & 0 \\
i\, \eb \cc & 0 & 0 & \cb & 0 \\
\db^' & 0 & 0 & \eb & \frac{\cb}{\cc} 
\end{pmatrix}
\cdot
\begin{pmatrix}
\rho_0 \\
\hat{\kappa}_0 \\
\hat{\zeta}_0 \\
\ov{\hat{\kappa}_0} \\
\ov{\hat{\zeta}_0}
\end{pmatrix}
.\end{equation*}
To simplify the notations, we simply drop the $'$ and write $\dd$ instead of  $\dd'$ in the sequel.
$G_3$ is the matrix Lie group whose elements are of the form 
$$
{\sf g} = \begin{pmatrix}
\cc \cb & 0 & 0 & 0 & 0 \\
- i\, \ee \cb & \cc & 0 & 0 &0 \\
\dd & \ee & \frac{\cc}{\cb} & 0 & 0 \\
i\, \eb \cc & 0 & 0 & \cb & 0 \\
\db & 0 & 0 & \eb & \frac{\cb}{\cc} 
\end{pmatrix}
.$$
It is a six dimensional real Lie group. We compute its Maurer Cartan forms with the usual formula
\[
d {\sf g} \cdot {\sf g}^{-1}
=
\begin{pmatrix}
\gamma^1 + \ov{\gamma}^1 & 0 & 0 & 0 &0 \\
\gamma^2 & \gamma^1 & 0 & 0 & 0 \\
\gamma^3 & i\, \gamma^2 & \gamma^1 - \ov{\gamma}^1 & 0 & 0 \\
\ov{\gamma}^2 & 0 & 0 & \ov{\gamma}^1 & 0 \\
- \gamma^3 & 0 & 0 &- i\, \ov{\gamma}^2 & - \gamma^1 + \ov{\gamma}^1 \\
\end{pmatrix}
\]

where 
\begin{equation*}
\gamma^1 := \frac{d \cc}{\cc},
\end{equation*}
\begin{equation*}
\gamma^2 := i \, \ee \frac{d \cc}{\cc^2} - i\, \frac{\ee \, d \cb}{\cc \cb} - i\, \frac{d \ee}{\cc}
\end{equation*}
and 
\begin{equation*}
\gamma^3 := \left(\frac{\dd \cc + i\, \ee^2 \cb }{\cc^2 \cb}\right) \left(\frac{d \cb}{\cb} - \frac{ d \cc }{\cc} \right) + \frac{d \dd}{\cc \cb} + 
i \, \frac{\ee d \ee}{\cc^2}
.\end{equation*}

As a preliminary step before the third loop of absorption and normalization, we compute the structure equations for the coframe 
$(\rho_0, \hat{\kappa}_0 ,\hat{ \zeta_0} , \ov{\hat{\kappa}_0}, \ov{\hat{\zeta}_0})$.
From the formula :
{\small 
\begin{multline*}
d  \left( \frac{\ov{\mathcal{L}_1 }\left( \Lbk \right)}{\Lbk} - \ov{P} \right)  
= 
\left(- \mathcal{T}\left( \ov{P} \right) - \frac{\Lb \left( \Lbk \right) \mathcal{T}\left( \Lbk \right)}{\Lbk^2} + 
\frac{\mathcal{T} \left( \Lb \left( \Lbk \right) \right)}{\Lbk} \right) \rho_0 
\\ + 
\left(
\frac{\Lb \left( \Lbk \right) \LL \left( \Lbk \right) }{\Lbk^2} + \frac{ \LL\left( \Lb \left( \Lbk \right) \right) }{\Lbk} - \LL \left( \ov{P} \right) 
\right) \kappa_{0} 
\\ +
\left( \frac{\Lb \left( \Lbk \right) \mathcal{K}\left( \Lbk \right)}{\Lbk^2} - \mathcal{K} \left( \ov{P} \right) + 
\frac{\mathcal{K}\left( \Lb \left( \Lbk \right) \right)}{\Lbk} \right) \zeta_{0}
 \\ +
\left(-\frac{\Lb \left( \Lbk \right)^2}{\Lbk^2} + \frac{ \Lb \left( \Lb \left( \Lbk \right) \right) }{\Lbk} - \Lb \left( \ov{P} \right) 
\right) \ov{\kappa_{0}} 
 \\ +
\left( - \frac{\Lbkb \LL \left( \Lbk\right)}{\Lbk} + \Lbkb \ov{P}
 \right) \ov{\zeta_{0}}
,\end{multline*}
}
we get:
\begin{multline*}
d \rho_0 
=
\left( \frac{1}{3} \frac{\LL \left( \Lkb \right)}{\Lkb} + \frac{2}{3} P \right) \rho_0 \wedge \hat{\kappa}_0
-
\frac{\Lk}{\Lbk} \rho_0 \wedge \hat{\zeta}_0 
\\ +
\left(
\frac{1}{3} \frac{\Lb \left( \Lbk \right)}{\Lbk} + \frac{2}{3} \ov{P}
\right)
\rho_0 \wedge \ov{\hat{\kappa}_0}
-
\frac{\Lbkb}{\Lkb} \rho_0 \wedge \ov{\hat{\zeta}_0} 
+ i \, \hat{\kappa}_0 \wedge \ov{\hat{\kappa}_0},
\end{multline*}

\begin{dmath*}
d \hat{\kappa}_0 = \left( \frac{i}{9} \frac{\Lb \left( \Lbk \right) \LL \left( \Lkb \right)}{\Lbk \Lkb} + i \, \frac{2}{9} \frac{\Lb \left( \Lbk \right)}{\Lbk} P-
 \frac{i}{9} \frac{\LL \left( \Lkb \right)}{\Lkb} \ov{P} 
- i\, \frac{2}{9} P \ov{P} + \frac{i}{3} \LL \left(\ov{ P }\right)
 -
\frac{i}{3} \frac{\LL \left( \Lb \left( \Lbk \right) \right)}{\Lbk} +
 \frac{i}{3} \frac{\Lb \left( \Lbk \right) \LL \left( \Lbk \right)}{\Lbk^2} \right)  \left.  \rho_0 \wedge \hat{\kappa}_0 \right.
+
\left(-\frac{i}{3} \, \frac{\LL \left( \Lbk \right)}{\Lbk} + \frac{i}{3} \frac{\Lb \left( \Lbk \right) \mathcal{K}\left( \Lbk \right)}{\Lbk^3} -
\frac{i}{3} \frac{\mathcal{K} \left( \Lb \left( \Lbk \right)\right) }{\Lbk^2} - 
\frac{i}{3} \frac{\LL \left( \Lkb \right)}{\Lkb} - \frac{\Tk}{\Lbk} \right) \left. \rho_0 \wedge \hat{\zeta}_0 \right.
+
\left(
i \, \frac{4}{9} \frac{\Lb \left( \Lbk \right) ^2}{\Lbk^2} + \frac{i}{9} \frac{\Lb \left( \Lbk \right) \ov{P}}{\Lbk}
- i \, \frac{2}{9} \, \ov{P}^2 + i \, \frac{1}{3} \frac{\Lb \left( \Lb \left( \Lbk \right) \right)}{\Lbk}
\right) \left. \rho_0 \wedge \ov{\hat{\kappa}_0} \right.
-\frac{\Lk}{\Lbk}  \hat{\kappa}_0 \wedge \hat{\zeta}_0
 +
\left(\frac{1}{3} \, \ov{P} - \frac{1}{3} \frac{\Lb \left( \Lbk \right)}{\Lbk} \right)  \hat{\kappa}_0 \wedge \ov{\hat{\kappa}_0} 
 + \hat{\zeta}_0 \wedge 
\ov{\hat{\kappa}_0} 
,\end{dmath*}

and

{\small
\begin{dmath*}
d \hat{\zeta}_0 = 
\left( \frac{i}{3} \frac{\Lb \left( \Lbk \right) \LL \left( \Lkb \right)}{\Lbk \Lkb} - \frac{i}{3} \frac{\Lb \left( \Lbk \right)}{\Lbk} P - \frac{i}{3}
 \frac{\Lb \left( \Lbk \right) \LL \left( \Lbk \right)}{\Lbk^2} + \frac{i}{3} \frac{\LL \left( \Lbk \right)}{\Lbk} \ov{P} + 
\frac{\mathcal{T}\left(\Lbk\right)}{\Lbk} \right)
\left.\rho_0 \wedge \hat{\kappa}_0 \right. + \frac{\LL \left( \Lbk\right)}{\Lbk} \hat{\kappa}_0 \wedge \hat{\zeta}_0
 - \frac{\Lb \left( \Lbk \right)}{\Lbk} \,  \left. \hat{\zeta}_0 \wedge 
\ov{\hat{\kappa}_0} \right. + \frac{\Lbkb}{\Lkb} \, \hat{\zeta}_0 \wedge \ov{\hat{\zeta}_0}
.\end{dmath*}
}

\section{Absorption of torsion and normalization: third loop} \label{section:step3}
\subsection{Lifted structure equations}
We are now ready to perform the third loop of Cartan's method. We begin with the structure equations for the lifted coframe. We have:
\begin{dgroup*}
\begin{dmath*}
d \rho
=
\gamma^{1} \wedge \rho +  \overline{\gamma^{1}} \wedge \rho \\
+
V^{\rho}_{\rho \kappa} \, \rho \wedge \kappa
+
V^{\rho}_{\rho \zeta} \, \rho \wedge \zeta
+
V^{\rho}_{\rho \overline{\kappa}} \, \rho \wedge \overline{\kappa} 
+
V^{\rho}_{\rho \overline{\zeta}} \, \rho \wedge \overline{\zeta}
+
i \, \kappa \wedge \overline{\kappa}
,\end{dmath*}

\begin{dmath*}
d \kappa
=
\gamma^{1} \wedge \kappa + \gamma^{2} \wedge \rho \\
+
V^{\kappa}_{\rho \kappa} \, \rho \wedge \kappa
+
V^{\kappa}_{\rho \zeta} \, \rho \wedge \zeta
+
V^{\kappa}_{\rho \overline{\kappa}} \, \rho \wedge \overline{\kappa}
+
V^{\kappa}_{\rho \overline{\zeta}} \, \rho \wedge \overline{\zeta}
+
V^{\kappa}_{\kappa \zeta} \, \kappa \wedge \zeta
+
V^{\kappa}_{\kappa \overline{\kappa}} \, \kappa \wedge \overline{\kappa}
+
\zeta \wedge \overline{\kappa}
,\end{dmath*}
\begin{dmath*}
d \zeta
=
\gamma^{3} \wedge \rho + i \, \gamma^{2} \wedge \kappa + \gamma^{1} \wedge 
\zeta - \overline{\gamma^{1}} \wedge \zeta \\
+
V^{\zeta}_{\rho \kappa}  \, \rho \wedge \kappa
+
V^{\zeta}_{\rho \zeta} \, \rho \wedge \zeta
+
V^{\zeta}_{\rho \overline{\kappa}} \, \rho \wedge \overline{\kappa}
+
V^{\zeta}_{\rho \overline{\zeta}} \, \rho \wedge \overline{\zeta}
+
V^{\zeta}_{\kappa \zeta} \, \kappa \wedge \zeta
+
V^{\zeta}_{\kappa \overline{\kappa}} \, \kappa \wedge \overline{\kappa}
+
V^{\zeta}_{\kappa \overline{\zeta}} \, \kappa \wedge \overline{\zeta}
+
V^{\zeta}_{\zeta \overline{\kappa}} \, \zeta \wedge \overline{\kappa}
+
V^{\zeta}_{\zeta \overline{\zeta}} \, \zeta \wedge \overline{\zeta}
,\end{dmath*}
\end{dgroup*}

where
\begin{dmath*}
V^{\rho}_{\rho \kappa}
=
-
\frac {\overline {\sf e}} {\overline{\sf c}}
+
\frac {1 }{3{\sf c}} \, \frac{\mathcal{L}_{1} \left( \mathcal{L}_{1} \left( 
\overline{k} \right)  \right)} {\mathcal{L}_{1} \left( \overline{k}\right)}
+
\frac{2}{3} \, {\frac {P} {{\sf c}}} 
+
{\frac {{\sf e} \, \overline{\sf c} \, } {{{\sf c}}^{2} 
}} \, \frac{\mathcal{L}_{1}(k) }{\overline{\mathcal{L}_{1}} (k)}  
,\end{dmath*}

\begin{dmath*}
V^{\rho}_{\rho \zeta} 
=
-
{\frac {{\overline{\sf  c}}  }{{\sf c} } \frac{\mathcal{L}_{1} (k)}{\overline{\mathcal{L}_{1}} \left( 
k \right) }}
,\end{dmath*}

\begin{dmath*}
V^{\rho}_{\rho \overline{\kappa}} 
=
-{\frac {{\sf e}} {{\sf c}}}
+
\frac{1}{3{\ov{\sf c}}} \, {\frac {\overline{\mathcal{L}_{1}} \left( 
\overline{\mathcal{L}_{1}} (k)  \right) }{ \overline{\mathcal{L}_{1}}
 (k) }}
 +
 \frac{2}{3} \, \frac{\overline{P}}{{\overline{\sf c}}}
 +
 {\frac {{\overline{\sf e}} {\sf c} }{{{\overline{\sf c}
}}^{2}}} \, \frac{\overline{\mathcal{L}_{1}} (\overline{k})}{\mathcal{L}_{1} (\overline{k}) } 
,\end{dmath*}

\begin{dmath*}
V^{\rho}_{\rho \overline{\zeta}}
=
-\frac{\sf c}{\overline{\sf c}} 
  \, \frac{\overline{\mathcal{L}_{1}} (\overline{k})}{\mathcal{L}_{1} (\overline{k})}
,\end{dmath*}

\begin{dmath*}
V^{\kappa}_{\rho \kappa} = \frac{i}{3} \, \frac{\ee}{\cc^2} \frac{\KK \left( \Lb \left( \Lbk \right) \right)}{\Lbk^2} - \frac{i}{3} \, \frac{\ee}{\cc^2} \, 
\frac{\Lb \left( \Lbk \right) \KK \left( \Lbk \right)}{\Lbk^3}  - \frac{i}{3} \, \frac{\eb}{\cb^2} \, \frac{\Lb \left( \Lbk \right)}{\Lbk} + \frac{2}{9} \, 
\frac{i}{\cc \cb} \, \frac{\Lb \left( \Lbk \right) P}{\Lbk} - \frac{2i}{3} \, \frac{\ee}{\cc^2} \, P + \frac{i}{3} \, \frac{\eb}{\cb^2} \, \ov{P}
+ \frac{1}{3} \, \frac{i}{\cc \cb } \, \LL(\ov{P}) - \frac{2}{9} \, \frac{i}{\cc \cb} \, P \ov{P} - i \, \frac{\cb \ee^2}{\cc^3} \, \frac{\Lk}{\Lbk}
+ \frac{1}{9} \, \frac{i}{\cc \cb} \, \frac{\Lb\left( \Lbk \right) \LL \left( \Lkb \right)}{\Lbk \, \Lkb} - \frac{1}{9} \, \frac{i}{\cc \cb} \, 
\frac{\LL \left(\Lkb \right) \ov{P}}{\Lkb} + \frac{i}{3} \, \frac{\ee}{\cc^2} \, \frac{\LL \left( \Lbk \right)}{\Lbk} + \frac{1}{3} \, 
\frac{\ee}{\cc^2} \, \frac{\Tk}{\Lbk} -
 \frac{\dd}{\cc^2} \, \frac{\Lk}{\Lbk} + \frac{1}{3} \, \frac{i}{\cc \cb} \, \frac{\Lb \left( \Lbk \right) \LL \left( \Lbk \right)}{\Lbk^2} - \frac{1}{3} \, 
\frac{i}{\cc \cb} \, \frac{\LL \left( \Lb \left( \Lbk \right)\right)}{\Lbk}
\end{dmath*}

\begin{dmath*}
V^{\kappa}_{\rho \zeta} 
=
\frac{i}{3 \cc} \, \frac{\overline{\mathcal{L}_{1}} \left( \overline{\mathcal{L}_{1}} (k) 
\right) \mathcal{K} \left( \overline{\mathcal{L}_{1}} (k)  \right) }{  \left( \overline{\mathcal{L}_{1}}  (k)  \right) ^{3}}
-
\frac{i}{3\cc} \, \frac{\LL \left( \Lbk \right)}{\Lbk}
-
 \frac{i}{3{\sf c}} \, \frac {\mathcal{K}
 \left( \overline{\mathcal{L}_{1}} \left( \overline{\mathcal{L}_{1}} (k)  \right) 
 \right) }{ \left( \overline{\mathcal{L}_{1}} (k)  \right) ^{2}}
-
\frac{1}{3\cc} \, \frac{\Tk}{\Lbk}
+
i \, \frac{\overline{\sf e}}{\overline{\sf c}}
-
\frac{i}{3{\sf c}} \, \frac{\mathcal{L}_{1} \left( \mathcal{L}_{1} (\overline{k})  \right) }{\mathcal{L}_{1}
(\overline{k})}
,\end{dmath*}

\begin{dmath*}
V^{\kappa}_{\rho \overline{\kappa}} 
=
-  \frac{2i}{3} \, \frac {\sf e }{\overline{\sf c}\,{\sf c}} \, \frac{\overline{\mathcal{L}_{1}} \left( \overline{\mathcal{L}_{1}} \left( k
 \right)  \right) }{\overline{\mathcal{L}_{1}} (k)}
+
\frac{4i}{9} \, {\frac {\left( \overline{\mathcal{L}_{1}} \left( \overline{\mathcal{L}_{1}} \left( k
 \right)  \right)  \right) ^{2}}{{{\overline{\sf c}}}^{2} \left( \overline{\mathcal{L}_{1}}
 (k)  \right) ^{2}}}
 +
 \frac{i}{9 \overline{\sf c}^{2} } \, {\frac {\overline{P}
\overline{\mathcal{L}_{1}} \left( \overline{\mathcal{L}_{1}} (k)  \right) }{\overline{\mathcal{L}_{1}} (k) }}
-
\frac{i}{3} \, {\frac {\overline{P}{\sf 
e}}{{\overline{\sf c}} {\sf c}}}
-
\frac{2i}{9} \,{\frac {\overline{P}^
{2}}{{{\overline{\sf c}}}^{2}}}
+
\frac{i}{3} \, {\frac {\overline{\mathcal{L}_{1}} \left( \overline{P}
 \right) }{{{\overline{\sf c}}}^{2}}}
 -
 \frac{i}{3{{\overline{\sf c}}}^{2}} \, {\frac {\overline{\mathcal{L}_{1}} \left( 
\overline{\mathcal{L}_{1}} \left( \overline{\mathcal{L}_{1}} (k)  \right)  \right) }{
\overline{\mathcal{L}_{1}} (k) }}
-
{\frac {{\sf d}}{{\sf 
c} {\cb}}}
-
i \, \, \frac{{\sf e} \overline{\sf e} }{\overline{\sf c}^{2}}
\frac{\overline{\mathcal{L}_{1}} \left( 
\overline{k} \right)}{\mathcal{L}_{1} (\overline{k}) }
,\end{dmath*}

\begin{dmath*}
V^{\kappa}_{\rho \overline{\zeta}}
=
i \,  \frac {{\sf e} }{\overline{\sf c}} \, \frac{\overline{\mathcal{L}_{1}} (\overline{k}) }{\mathcal{L}_{1} (\overline{k})}
,\end{dmath*}

\begin{dmath*}
V^{\kappa}_{\kappa \zeta} 
=
- \frac{\overline{\sf c}}{\sf c} \, \frac{\mathcal{L}_{1} (k) }{\overline{\mathcal{L}_{1}} \left( 
k \right) }
,\end{dmath*}

\begin{dmath*}
V^{\kappa}_{\kappa \overline{\kappa}}
=
-\frac{1}{3 {\overline{\sf c}}} \, {\frac {\overline{\mathcal{L}_{1}} \left( \overline{\mathcal{L}_{1}} (k) 
 \right) }{\overline{\mathcal{L}_{1}} (k) }}
 +
 \frac{1}{3} \, {\frac {
\overline{P}}{{\overline{\sf c}}}}
,\end{dmath*}

\begin{dmath*}
V^{\zeta}_{\rho \kappa}
=
\frac{2i}{3} \, {\frac {{\sf e}{\overline{\sf e}}}{{\sf c}{{\overline{\sf c}}}^{2}} \, \frac{\overline{\mathcal{L}_{1}} \left( \overline{\mathcal{L}_{1}}
 (k)  \right)}{\overline{\mathcal{L}_{1}}
 (k) }}
+
\frac{i}{3} \, \frac {{\sf e}{\overline{\sf e}}}{{\sf c}\,{{\overline{\sf c}}}^{2}} \, \overline{P}
-
\frac{i}{3} \, \frac {{\sf e}} {{\sf c}^{2}{\overline{\sf c}}} \, \frac{\overline{P} \mathcal{L}_{1} \left( \overline{\mathcal{L}_{1}} \left( k
\right)  \right)}{\overline{\mathcal{L}_{1}} \left( k
\right) } 
 +
\frac{i}{3} \, \frac{\ee^2}{\cc^3} \, \frac{\LL \left( \Lbk \right)}{\Lbk}
+
\frac{ {\sf d}}{ {\sf c}^{2} \overline{\sf c}
} \, \frac{\mathcal{L}_{1} \left( \overline{\mathcal{L}_{1}} (k)  \right) }{\overline{\mathcal{L}_{1}} (k)}
-
\frac{{\sf e} \overline{\sf d}}{{\sf c} {{\overline{\sf c}}}^{2}} \, \frac{
\overline{\mathcal{L}_{1}} (\overline{k})}{\mathcal{L}_{1}
(\overline{k}) }
+
\frac{2i}{3} \, \frac{{\sf e}}{{{\sf c}}^{2}{\overline{\sf c}}} \,  \frac{\mathcal{L}_{1} \left( 
\overline{\mathcal{L}_{1}} (k)  \right) \overline{\mathcal{L}_{1}} \left( 
\overline{\mathcal{L}_{1}} (k)  \right)}{
\overline{\mathcal{L}_{1}} (k) ^{2}}
-
\frac{\sf e} { {\sf c}
^{2} {\overline{\sf c}} }  \, \frac{\mathcal{T} \left( \overline{\mathcal{L}_{1}} (k)  \right)} {\overline{\mathcal{L}_{1}} (k)}
+
\frac{i}{3} \, \frac{\ee}{\cc^2 \cb} \, \Lb(\ov{P})
+
\frac{5i}{9} \, \frac{\sf e}{{\sf c}^{2} \overline{\sf c}} \frac{P \overline{\mathcal{L}_{1}} \left( \overline{\mathcal{L}_{1}} (k) 
 \right) } {\overline{\mathcal{L}_{1}} (k)}
 -
\frac{i}{3} \, \frac{ \sf e} { {\sf c}^{2} \overline{\sf c} } \, \frac{ \mathcal{L}_{1} \left( \overline{\mathcal{L}_{1}} \left( \overline{\mathcal{L}_{1}}
 (k)  \right)  \right) }{
\overline{\mathcal{L}_{1}} (k) }
+
\frac{1}{3} \, \frac {\ee^2 }{\cc^3} \, \frac{\Tk}{\Lbk}
+
 \frac{i}{3} \, \frac{{\sf e}^{2}}{{\sf c}^{3}} \, \frac{\mathcal{L}_{1}
\left( \mathcal{L}_{1} (\overline{k})  \right)} {\mathcal{L}_{1}
(\overline{k}) }
-
{\frac {{\sf d}\,{\overline{\sf e}}}{{\sf c}\,
{{\overline{\sf c}}}^{2}}}
+
\frac{2}{3} \, {\frac {{\sf d}}{{{\sf c}}^{2}{\overline{\sf c}}}} \, P
-
\frac{i}{9} \, \frac{\sf e}{{\sf c}^{2} \overline{\sf c} } \, \frac{\overline{P}\mathcal{L}_{1} \left( \mathcal{L}_{1} \left( \overline{k}
 \right)  \right) } {\mathcal{L}_{1} \left( \overline{k}
 \right) }
 -
\frac{i}{3} \, \frac{ {\sf e}^{2}}{{\sf c}^{3}} \, \frac{\overline{\mathcal{L}_{1}} \left( 
\overline{\mathcal{L}_{1}} (k)  \right) \mathcal{K} \left( \overline{\mathcal{L}_{1}} \left( k
\right)  \right)}{ \left( \overline{\mathcal{L}_{1}} \left( k
\right)  \right) ^{3}}
+
 \frac{i}{3} \, \frac{{\sf e}^{2}} {{\sf c}^{3}} \frac{\mathcal{K} \left( 
\overline{\mathcal{L}_{1}} \left( \overline{\mathcal{L}_{1}} (k)  \right)  \right) }{
 \left( \overline{\mathcal{L}_{1}} (k)  \right) ^{2}}
-
\frac{2i}{9} \, \frac{\sf e}{ {\sf c}^{2} \overline{\sf c}}  \, \overline{P} P
-
\frac{2i}{9} \, \frac{\sf e} {{\sf c}^
{2} {\overline{\sf c}}} \,\frac{ \overline{\mathcal{L}_{1}} \left( \overline{\mathcal{L}_{1}} (k) 
 \right) \mathcal{L}_{1} \left( \mathcal{L}_{1} (\overline{k})  \right) } 
{\overline{\mathcal{L}_{1}} (k) \mathcal{L}_{1} (\overline{k}) }
+
i \, \frac {{\sf e} \overline{\sf e}^{2}} {\overline{\sf c}^{3}} \frac{\overline{\mathcal{L}_{1}} \left( 
\overline{k} \right) }{\mathcal{L}_{1} (\overline{k}) }
-
i \, \frac{{\sf e}^{2} \overline{\sf e}} {{\sf c}^{2}{\overline{\sf c}}}
+
\frac{1}{3} \, \frac{\sf d  }{{\sf c}^{2} \overline{\sf c}} \, \frac{\mathcal{L}_{1} \left( \mathcal{L}_{1} 
(\overline{k})  \right)}{\mathcal{L}_{1} (\overline{k}) }
,\end{dmath*}

\begin{dmath*}
V^{\zeta}_{\rho \zeta} 
=
- \frac{1}{3} \,
\frac{\ee}{\cc^2} \, \frac{\Tk}{\Lbk}
-
\frac{\overline{\sf d}}{\overline{\sf c}^{2}} \, \frac{\overline{\mathcal{L}_{1}} (\overline{k})}{\mathcal{L}_{1} (\overline{k})}
+
\frac{i}{3} \, \frac{1}{ {\sf c} \overline{\sf c}} \, {\frac {\overline{P}\mathcal{L}_{1}
 \left( \overline{\mathcal{L}_{1}} (k)  \right) }{\overline{\mathcal{L}_{1}} (k) }}
+
i \, {\frac {{\sf e}\,{\overline{\sf e}}}{{\sf c}
\,{\overline{\sf c}}}}
-
i \, \frac{\cb \ee^2} {\cc^3} \frac{\mathcal{L}_{1} (k)}{\overline{\mathcal{L}_{1}} (k) }
 -
 \frac{i}{3} \, \frac {{\sf e} } {{{\sf c}}^{2}} \frac {\mathcal{K} \left( \overline{\mathcal{L}_{1}} \left( 
\overline{\mathcal{L}_{1}} (k)  \right)  \right)}{
 \overline{\mathcal{L}_{1}} (k)^{2}} 
 - 
\frac{i}{3} \, \frac{1}{{\sf c}\,{\overline{\sf c}}} \, \frac{P
\overline{\mathcal{L}_{1}} \left( \overline{\mathcal{L}_{1}} (k)  \right)}{ \overline{\mathcal{L}_{1}} (k) }
-
i \, \frac{ \overline{\sf e} } { {\overline{\sf c}}^{2} } \, \frac{
\overline{\mathcal{L}_{1}} \left( \overline{\mathcal{L}_{1}} (k)  \right) }{
\overline{\mathcal{L}_{1}} (k) }
+
\frac{1}{ {\sf c} {\overline{\sf c}} } \frac{\mathcal{T} \left( 
\overline{\mathcal{L}_{1}} (k)  \right) }{
\overline{\mathcal{L}_{1}} (k) }
-
\frac{i}{3} \, \frac {\sf e}{{\sf c}^{2}} \, \frac{ \mathcal{L}_{1} \left( \mathcal{L}_{1}
 (\overline{k})  \right) }{\mathcal{L}_{1} \left( 
\overline{k} \right) }
-
\frac{i}{3} \, \frac{1}{{\sf c} {\overline{\sf c}}} \,  \frac {\mathcal{L}_{1} \left( \overline{\mathcal{L}_{1}} \left( k
 \right)  \right) \overline{\mathcal{L}_{1}} \left( \overline{\mathcal{L}_{1}} (k) 
 \right) }{ \overline{\mathcal{L}_{1}} (k) ^{2}}
 +
 \frac{i}{3} \, \frac{1}{ {\sf c} \overline{\sf c}} \, {\frac {\overline{\mathcal{L}_{1}} \left( \overline{\mathcal{L}_{1}} (k)  \right) \mathcal{L}_{1}
 \left( \mathcal{L}_{1} (\overline{k})  \right) }{
\overline{\mathcal{L}_{1}} (k) \mathcal{L}_{1} (\overline{k}) }}
+
\frac{i}{3} \, \frac{\sf e}{{\sf c}^
{2}}  \, \frac{\overline{\mathcal{L}_{1}} \left( \overline{\mathcal{L}_{1}} (k) 
 \right) \mathcal{K} \left( \overline{\mathcal{L}_{1}} (k)  \right) }{\overline{\mathcal{L}_{1}} (k)^{3}}
+
i \,\frac{2}{3} \,  \frac{
\sf e}{{\sf c}^
{2}} \frac{\mathcal{L}_{1} \left( \overline{\mathcal{L}_{1}} (k)  \right) }{\overline{\mathcal{L}_{1}} (k) }
-
i \, {\frac {{\sf c} {{\overline{\sf e}}}^{2}
}{{{\overline{\sf c}}}^{3}}} \, \frac{\overline{\mathcal{L}_{1}} (\overline{k})}{\mathcal{L}_{1} \left( 
\overline{k} \right)}
-
\frac{\sf d}{{\sf c}^{2}} \, \frac{\mathcal{L}_{1} (k) }{\overline{\mathcal{L}_{1}} (k) }
,\end{dmath*} 

\begin{dmath*}
V^{\zeta}_{\rho \overline{\kappa}} 
=
2 \, \frac{{\sf d} \overline{\sf e}} {{\overline{\sf c}}^{3}} \, \frac{\overline{\mathcal{L}_{1}} \left( \overline{k}
 \right) }{\mathcal{L}_{1} (\overline{k})}
 + 
i \, \frac{\overline{\sf e} {\sf e}^{2}} {
\overline{\sf c}^{2}{\sf c}} \, \frac{\overline{\mathcal{L}_{1}} (\overline{k}) }{\mathcal{L}_{1} (\overline{k})}
+
\frac{4}{3} \, \frac{
\sf d}{\overline{\sf c}^{2} {\sf c}} \, \frac{\overline{\mathcal{L}_{1}} \left( \overline{\mathcal{L}_{1}} (k)  \right) }
{\overline{\mathcal{L}_{1}} (k)}
+
\frac{2i}{3} \, {\frac {
{{\sf e}}^{2}}{{\overline{\sf c}}\,{{\sf c}}^{2}}} \, \frac{\overline{\mathcal{L}_{1}} \left( \overline{\mathcal{L}_{1}} (k) 
 \right) }{\overline{\mathcal{L}_{1}} (k) }
 +
 \frac{4i}{9} \, {\frac {{\sf e} }{{{\overline{\sf c}}}^{2}{\sf c}}} \frac{\left( \overline{\mathcal{L}_{1}}  \left( \overline{\mathcal{L}_{1}}
 (k)  \right) \right) ^{2}}{
 \overline{\mathcal{L}_{1}} (k) ^{2}}
 +
 \frac{i}{9} \frac{
\sf e}{\overline{\sf c}^{2} {\sf c}} \, \frac{\overline{P}\overline{\mathcal{L}_{1}} \left( \overline{\mathcal{L}_{1}} \left( k
 \right)  \right) }{\overline{\mathcal{L}_{1}} \left( k
 \right) }
 +
\frac{i}{3} \, {\frac {{{\sf e}}^{2}}{{\overline{\sf c}} {{\sf 
c}}^{2}}} \, \overline{P}
-
\frac{2i}{9} \, {\frac {{\sf e}}{
{{\overline{\sf c}}}^{2}{\sf c}}} \,  \overline{P}^{2}
+
\frac{i}{3} \, {\frac {{\sf e}}{{{\overline{\sf c}}}^{2}{\sf c}}} \, \overline{\mathcal{L}_{1}} \left( 
\overline{P} \right) 
-
\frac{i}{3} \, {\frac {{\sf e}} {{{\overline{\sf c}}}^{2}{\sf c}}} \frac{\overline{\mathcal{L}_{1}} \left( \overline{\mathcal{L}_{1}} 
\left( \overline{\mathcal{L}_{1}} \left( k
 \right)  \right)  \right) }{\overline{\mathcal{L}_{1}}
 (k) } 
 -
2 \, {\frac {{\sf e} {\sf d}}{{\overline{\sf c}} {{\sf c
}}^{2}}}
-
i \, {\frac {{{\sf e}}^{3}}{{{\sf c}}^{3}}}
+
\frac{2}{3} \, {\frac {{\sf 
d} }{{{\overline{\sf c}}}^{2}{\sf c}}} \,  \overline{P}
,\end{dmath*}

\begin{dmath*}
V^{\zeta}_{\rho \overline{\zeta}} 
=
-2 \, {\frac {{\sf d}}{{{
\overline{\sf c}}}^{2}}} \frac{\overline{\mathcal{L}_{1}} (\overline{k}) }{\mathcal{L}_{1} \left ( \overline{k} \right) } 
- 
i \, {\frac {{{\sf e}}^{2}
}{{\sf c}\,{\overline{\sf c}}}} \, \frac{\overline{\mathcal{L}_{1}} (\overline{k}) }{\mathcal{L}_{1}
 (\overline{k}) }
,\end{dmath*}

\begin{dmath*}
V^{\zeta}_{\kappa \zeta} 
=
\frac{1}{\sf c} \,  {\frac {\mathcal{L}_{1} \left( \overline{\mathcal{L}_{1}} (k)  \right) }{
\overline{\mathcal{L}_{1}} (k) }}
-
{\frac {{\sf e}\,{\overline{\sf c}}}{{{\sf c}}^{2}}} \, \frac{\mathcal{L}_{1} \left( 
k \right)}{\overline{\mathcal{L}_{1}} (k) }
,\end{dmath*}

\begin{dmath*}
V^{\zeta}_{\kappa \overline{\kappa}}
=
{\frac {{\sf e}{\overline{\sf e}}}
{{{\overline{\sf c}}}^{2}}} \, \frac{\overline{\mathcal{L}_{1}} (\overline{k})}{\mathcal{L}_{1} (\overline{k})}
+
\frac{2}{3} \, \frac {{\sf e} {\sf c
}}{\overline{\sf c}} \,  \frac{\overline{P}} {\overline{\mathcal{L}_{1}} (k) }
+
\frac{1}{3} \, {\frac{\sf e} {{\sf c}\,{\overline{\sf c}}}} \, \overline{P}
-
{\frac {{{\sf e}}^{2}}{{{
\sf c}}^{2}}} 
+
i \, {\frac {{\sf d}}{{\sf c}\,{\overline{\sf c}}}}
,\end{dmath*}

\begin{dmath*}
V^{\zeta}_{\kappa \overline{\zeta}} 
=
- \frac{\sf e}{{\overline{\sf c}}} \frac{ \overline{\mathcal{L}_{1}} (\overline{k}) }{
\mathcal{L}_{1} (\overline{k}) }
,\end{dmath*}

\begin{dmath*}
V^{\zeta}_{\zeta \overline{\kappa}}
=
-\frac{ \overline{\sf e} {\sf c}} { \overline{\sf c}^{2} } \, \frac{\overline{\mathcal{L}_{1}} (\overline{k}) 
}{\mathcal{L}_{1} (\overline{k}) }
-
\frac{1}{\ov{\sf c}} \, \frac {\overline{\mathcal{L}_{1}}
 \left( \overline{\mathcal{L}_{1}} (k)  \right) } {
\overline{\mathcal{L}_{1}} (k) }
+
{\frac {{\sf e}}{{\sf c}}}
,\end{dmath*}

\begin{dmath*}
V^{\zeta}_{\zeta \overline{\zeta}}
=
{\frac{\sf c}{\overline{\sf c}}} \, \frac{\overline{\mathcal{L}_{1}} (\overline{k}) }{\mathcal{L}_{1} (\overline{k})}
.\end{dmath*}

\subsection{Normalization of the group parameter $\dd$} 
As for the previous steps, we now start the absorption step.
We introduce:
\[
\tilde{\gamma}^i := \gamma^i -  z^i_{\rho} \,  \rho - z^i_{\kappa}  \,  \kappa - z^i_{\zeta} \,  \zeta - z^i_{\ov{\kappa}} \,  \ov{\kappa} - z^i_{\ov{\zeta}} \, \ov{\zeta}
.\]
The structure equations are modified accordingly:
\begin{dgroup*}
\begin{dmath*}
d \rho = \tilde{\gamma}^{1} \wedge \rho +  \overline{\tilde{\gamma}^{1}} \wedge \rho \\
+ \left( V_{\rho \kappa}^{\rho} - z^1_{\kappa}- \ov{z^1_{\ov{\kappa}}} \right) \left. \rho \wedge \kappa \right. + 
\left( V_{\rho \zeta}^{\rho} - z^1_{\zeta} - \ov{z^1_{\ov{\zeta}}} \right) \left. \rho \wedge \zeta \right.
+ \left( V_{\rho \ov{\kappa}}^{\rho} - z^1_{\ov{\kappa}} -\ov{z^1_{\kappa}} \right) \left. \rho \wedge \ov{\kappa} \right.
+ \left( V^{\rho}_{\rho \ov{\zeta}} - \ov{z^1_{\zeta}} - z^1_{\ov{\zeta}} \right) \left. \rho \wedge \ov{\zeta} \right.
+  i\, \kappa \wedge \ov{\kappa}
,\end{dmath*} 

\begin{dmath*}
d \kappa = \tilde{\gamma}^{1} \wedge \kappa + \tilde{\gamma}^{2} \wedge \rho \\
+ \left( V_{\rho \kappa}^{\kappa} - z^2_{\kappa} + 
z^{1}_{\rho} \right) \left. \rho \wedge \kappa \right. + \left( V_{\rho \zeta}^{\kappa}
 - z^2_{\zeta} \right) \left. \rho \wedge \zeta \right. 
+ \left( V^{\kappa}_{\rho \ov{\kappa}} - z^2_{\ov{\kappa}} \right) \left. \rho \wedge \ov{\kappa} \right.
+ \left( V^{\kappa}_{\rho \ov{\zeta}}- z^2_{\ov{\zeta}} \right) \left. \rho \wedge \ov{\zeta} \right. 
+ \left( V_{\kappa \zeta}^{\kappa} - z^1_{\zeta} \right) 
\left. \kappa \wedge \zeta \right. 
+ \left( V^{\kappa}_{\kappa \ov{\kappa}} 
- z^1_{\ov{\kappa}} \right) \left. \kappa \wedge \ov{\kappa} \right. 
+ \zeta \wedge \ov{\kappa} 
- z^1_{\ov{\zeta}} \left. \kappa \wedge \ov{\zeta} \right.
,\end{dmath*}
\end{dgroup*}
and 
\begin{dmath*}
d \zeta = \tilde{\gamma}^{3} \wedge \rho 
+ i \, \tilde{\gamma}^{2} \wedge \kappa + \tilde{\gamma}^{1} \wedge 
\zeta - \overline{\tilde{\gamma}^{1}} \wedge \zeta \\
+ \left( V^{\zeta}_{\rho \kappa} - z^3_{\kappa} + i \, z_{\rho}^2 \right) \left. \rho \wedge \kappa \right. 
+ \left( V^{\zeta}_{\rho \zeta} + z^1_{\rho} - z^3_{\zeta}- \ov{z^1_{\rho}} \right) \left. \rho \wedge \zeta \right.
+ \left( V^{\zeta}_{\rho \ov{\kappa}} - z^3_{\ov{\kappa}} \right) \left. \rho \wedge \ov{\kappa} \right. +
\left( V^{\zeta}_{\rho \ov{\zeta}} - z^3_{\ov{\zeta}} \right) \left. \rho \wedge \ov{\zeta} \right. +
\left( V^{\zeta}_{\kappa \zeta} - i \, z^2_{\zeta} + z^1_{\kappa} - \ov{z^1_{\ov{\kappa}}} \right)
 \left. \kappa \wedge \zeta \right. +
\left( V^{\zeta}_{\kappa \ov{\kappa}} - i \, z^2_{\ov{\kappa}} \right) \left. \kappa \wedge \ov{\kappa} \right.
+ \left( V^{\zeta}_{\kappa \ov{\zeta}} - i \, z^2_{\ov{\zeta}} \right) \left. \kappa \wedge \ov{\zeta} \right.
+ \left( V^{\zeta}_{\zeta \ov{\kappa}} 
- z^1_{\ov{\kappa}} + \ov{z^1_{\kappa}} \right) \left. \zeta \wedge \ov{\zeta} \right.
+ \left( V^{\zeta}_{\zeta \ov{\zeta}} - z^1_{\ov{\zeta}} + \ov{z^1_{\ov{\zeta}}} \right) 
\left.\zeta \wedge \ov{\zeta} \right.
.\end{dmath*}
We thus want to solve the system of linear equations:

\begin{alignat*}{3}
z^1_{\kappa} + \ov{z^1_{\ov{\kappa}}}  &= V^{\rho}_{\rho \kappa},  
& \qquad \qquad z^1_{\ov{\kappa}} + \ov{z^1_{\kappa}}  &= V^{\rho}_{\rho \ov{\kappa}}, 
& \qquad \qquad z^1_{\zeta} + \ov{z^1_{\ov{\zeta}}}  &= V^{\rho}_{\rho \zeta},  \\
\ov{z^1_{\zeta}} + z^1_{\ov{\zeta}} &= V^{\rho}_{\rho \ov{\zeta}}, 
& \qquad \qquad z^2_{\kappa} - z^1_{\rho} & = V^{\kappa}_{\rho \zeta}, 
& \qquad \qquad z^2_{\ov{\kappa}} &= V^{\kappa}_{\rho \ov{\kappa}}, \\
z^2_{\zeta}& = V^{\kappa}_{\rho \zeta}, 
& \qquad \qquad z^2_{\ov{\zeta}} &= V^{\kappa}_{\rho \ov{\zeta}}, 
& \qquad \qquad z^1_{\zeta}& = V^{\kappa}_{\kappa \zeta}, \\
z^1_{\ov{\zeta}} &= 0, 
& \qquad \qquad z^1_{\ov{\kappa}} &= V^{\kappa}_{\kappa \ov{\kappa}}, 
& \qquad \qquad z^3_{\kappa} - i \, z^2_{\rho} &= V^{\zeta}_{\rho \kappa}, \\
- z^1_{\rho} + \ov{z^1_{\rho}} + z^3_{\zeta} &= V^{\zeta}_{\rho \zeta}, 
& \qquad \qquad z^1_{\kappa} - \ov{z^1_{\ov{\kappa}}} - i\, z^2_{\zeta} &= - V^{\zeta}_{\kappa \zeta}, 
& \qquad \qquad i \, z^2_{\ov{\kappa}} &= V^{\zeta}_{\kappa \ov{\kappa}}, \\
z^3_{\ov{\kappa}}& = V^{\zeta}_{\rho \ov{\kappa}}, 
& \qquad \qquad z^3_{\ov{\zeta}} &= V^{\zeta}_{\rho \ov{\zeta}}, 
& \qquad \qquad i \, z^2_{\ov{\zeta}} &= V^{\zeta}_{\kappa \ov{\zeta}}, \\
z^1_{\ov{\kappa}} - \ov{z^1_{\kappa}} &= V^{\zeta}_{\zeta \ov{\kappa}},
& \qquad \qquad z^1_{\ov{\zeta}} - \ov{z^1_{\zeta}} &= V^{\zeta}_{\zeta \ov{\zeta}}.
\end{alignat*}

This is easily done as:
\begin{equation*}
\left\{
\begin{aligned}
z^1_{\kappa}  & = \ov{V^{\rho}_{\rho \ov{\kappa}}} - \ov{V^{\kappa}_{\kappa \ov{\kappa}}}, \\
z^1_{\ov{\kappa}} & = V^{\kappa}_{\kappa \ov{\kappa}}, \\
z^1_{\zeta} & = V^{\rho}_{\rho \zeta}, \\
z^1_{\ov{\zeta}} & = 0, \\
z^2_{\ov{\kappa}} & = V^{\kappa}_{\rho \ov{\kappa}}, \\
z^2_{\ov{\zeta}} & = V^{\kappa}_{\rho \ov{\zeta}}, \\
z^2_{\zeta} & = V^{\kappa}_{\rho \zeta}, \\
z^3_{\ov{\kappa}} & = V^{\zeta}_{\rho \ov{\kappa}}, \\
z^3_{\ov{\zeta}} & = V^{\zeta}_{\rho \ov{\zeta}}, \\
z^3_{\zeta} & = V^{\zeta}_{\rho \zeta} + z^1_{\rho} - z^1_{\rho}, \\
z^3_{\kappa} & = V^{\zeta}_{\rho \kappa} + i\, z^2_{\rho}, \\
z^2_{\kappa} & = V^{\kappa}_{\rho \zeta} + z^1_{\rho}, 
\end{aligned}
\right.
\end{equation*}
where $z^1_{\rho}$ and $z^2_{\rho}$ may be choosen freely. Eliminating the $z^{\smallbullet}_{\smallbullet}$
we get the following additional conditions on the $V^{\smallbullet}_{\smallbullet \smallbullet}$:
\begin{equation} \label{eq:31}
\left\{
\begin{aligned}
V^{\rho}_{\rho \ov{\kappa}} &= \ov{V^{\rho}_{\rho \kappa}}, \\
V^{\rho}_{\rho \ov{\zeta}}  &= \ov{V^{\rho}_{\rho \zeta}}, \\
V^{\rho}_{\rho \zeta} &= V^{\kappa}_{\kappa \zeta}, \\
i \, V^{\kappa}_{\rho \ov{\zeta}} &= V^{\zeta}_{\kappa \ov{\zeta}}, \\
V^{\rho}_{\rho \zeta} & = - \ov{V^{\zeta}_{\zeta \ov{\zeta}}}, \\
2 \, V^{\kappa}_{\kappa \ov{\kappa}} & = V^{\rho}_{\rho \ov{\kappa}} + V^{\zeta}_{\zeta \ov{\kappa}}.
\end{aligned}
\right.
\end{equation}
and
\begin{equation}
\left\{
\begin{aligned}
i \, V^{\kappa}_{\rho \ov{\kappa}} & = V^{\zeta}_{\kappa \ov{\kappa}},  \\
V^{\ov{\zeta}}_{\kappa \ov{\zeta}} +  V^{\zeta}_{\kappa \zeta} & =   i \, V^{\kappa}_{\rho \zeta}.
\end{aligned}
\right.
\end{equation}

We easily verify that the equations (\ref{eq:31}) are indeed satisfied. However the remaining two equations are not
and they provide two essential torsion coefficients, namely $i \, V^{\kappa}_{\rho \ov{\kappa}} - V^{\zeta}_{\kappa \ov{\kappa}}$ and 
$V^{\ov{\zeta}}_{\kappa \ov{\zeta}} +  V^{\zeta}_{\kappa \zeta} -   i \, V^{\kappa}_{\rho \zeta}$, which will give us at least one new normalization of the group coefficients.
Indeed we have 
\begin{multline*}
 i \, V^{\kappa}_{\rho \ov{\kappa}} - V^{\zeta}_{\kappa \ov{\kappa}}
=
- \frac{4}{9} \, \frac{1}{\cb^2} \, \frac{\Lb \left( \Lbk \right)^2}{ \Lbk^2} - \frac{1}{9} \, \frac{1}{\cb^2} \, \frac{\Lb \left( \Lbk \right) \ov{P}}{ \Lbk}
+ \frac{2}{9} \, \frac{\ov{P}^2}{\cb^2} \\ -   \frac{1}{3}\, \frac{\Lb \left( \ov{P} \right)}{\cb^2}  + 
\frac{1}{3} \, \frac{1}{\cb^2} \frac{\Lb \left( \Lb \left( \Lbk \right) \right)}{ \Lbk}
- 2i \, \frac{\dd}{\cc \cb} + \frac{\ee^2}{\cc^2}
.\end{multline*}
Setting this expression to $0$, we get the normalization of the parameter $\dd$:
\begin{multline*}
\dd = - i\, \frac{1}{2} \,  \frac{\ee^2 \cb}{ \cc} + i \, \frac{2}{9} \, \frac{\cc}{\cb} \, \frac{\Lb \left( \Lbk \right)^2}{\Lbk^2} + i \,
\frac{1}{18} \,   \frac{\cc}{\cb} \, \frac{\Lb \left(\Lbk \right) \ov{P}}{\Lbk} \\ - i \, \frac{1}{9} \, \frac{\cc}{\cb} \, \ov{P}^2  +  i \, \frac{1}{6} \, 
\frac{\cc}{\cb} \, \Lb \left( \ov{P} \right) - i \,  \frac{1}{6} \, \frac{\cc}{\cb} \, \frac{\Lb \left( \Lb \left( \Lbk \right) \right)}{\Lbk}.
\end{multline*}
The other equation gives the essential torsion coefficient:
\begin{dmath*}
\frac{1}{\cc}
 \left( \frac{2}{3} \, 
 \frac{\LL \left( \Lbk \right)}{\Lbk} + \frac{2}{3} \, \frac{\LL 
\left( \Lkb \right)}{\Lkb}
+ \frac{1}{3} \frac{\Lb \left( \Lbk \right) \KK \left( \Lbk \right)}{\Lbk^3} -
\frac{1}{3} \frac{ \KK \left( \Lb \left( \Lbk \right) \right)}{\Lbk^2} + \frac{i}{3} \, \frac{\Tk}{\Lbk}
\right)
\end{dmath*}.
In the sequel we define the functions $H$ and  $W$ on $M^{5}$ by:

\begin{multline*}
H:=   \frac{2}{9} \,  \frac{\Lb \left( \Lbk \right)^2}{\Lbk^2} + 
\frac{1}{18} \,   \frac{\Lb \left(\Lbk \right) \ov{P}}{\Lbk} \\  -  \frac{1}{9} \, \ov{P}^2  + \frac{1}{6} \, 
 \Lb \left( \ov{P} \right) -  \frac{1}{6} \,  \frac{\Lb \left( \Lb \left( \Lbk \right) \right)}{\Lbk}
\end{multline*}
and
\begin{multline}
\label{eq:W}
W:=  \frac{2}{3} \, 
 \frac{\LL \left( \Lbk \right)}{\Lbk} + \frac{2}{3} \, \frac{\LL 
\left( \Lkb \right)}{\Lkb} \\
+ \frac{1}{3} \frac{\Lb \left( \Lbk \right) \KK \left( \Lbk \right)}{\Lbk^3}  -
\frac{1}{3} \frac{ \KK \left( \Lb \left( \Lbk \right) \right)}{\Lbk^2} + \frac{i}{3} \, \frac{\Tk}{\Lbk}
.
\end{multline}

We do not use the normalization $\cc = W$ at the moment, because this is allowed only if $W$ does not vanish. We will deal with this discussion 
further during the fourth loop of the algorithm.
With these notations, we have
\begin{equation*}
 \dd = - \frac{i}{2} \frac{\ee^2 \cb}{\cc} + i \, \frac{\cc}{\cb} \, H.
\end{equation*}

As a result, the relations between the base coframe $(\rho_0 , \hat{\kappa}_0, \hat{\zeta}_0, \ov{\hat{\kappa}_0}, \ov{\hat{\zeta}_0})$ and the lifted coframe 
$(\rho, \kappa, \zeta, \ov{\kappa}, \ov{\zeta})$ take the form:
\begin{equation*}
\left\{
\begin{aligned}
& \rho = \cc \cb\,  \rho_0 \\
& \kappa = - i \, \ee \cb \, \rho_0 + \cb \,  \hat{\kappa}_0 \\
& \zeta = - i \, \frac{1}{2} \, \frac{\ee^2}{ \cb \cc} \, \rho_0 + \ee \, \hat{\kappa}_0 + \frac{\cc }{\cb} \left( \hat{\zeta}_0 + i \, H \, \rho_0 \right)
\end{aligned}
\right.
\end{equation*}
Here again we explicitly exhibit the new $G$-structure by letting 
\[
 \check{\zeta}_{0} := \hat{\zeta}_{0} + i \, H \, \rho_0 
.\]
With these notations, we have:
\begin{equation*}
\left\{
\begin{aligned}
& \rho = \cc \cb\,  \rho_0 \\
& \kappa = - i \, \ee \cb \, \rho_0 + \cb \,  \hat{\kappa}_0 \\
& \zeta = - i \, \frac{1}{2} \, \frac{\ee^2}{ \cb \cc} \, \rho_0 + \ee \, \hat{\kappa}_0 + \frac{\cc }{\cb} \, \check{\zeta}_0.
\end{aligned}
\right.
\end{equation*}
We have reduced the previous $G_3$-structure to a $G_4$-structure, where $G_4$ is the four dimensional matrix Lie group whose elements are of the form:
\begin{equation*}
\begin{pmatrix}
\cc \cb & 0 & 0 & 0 & 0 \\
-i \, \ee \cb & \cc & 0 & 0 & 0 \\
- \frac{i}{2} \, \frac{\ee^2 \cb}{\cc}  & \ee & \frac{\cc}{\cb} & 0 & 0 \\
i \,  \eb \cc & 0 & 0 &  \cb & 0  \\
 \frac{i}{2} \, \frac{\eb^2 \cc}{\cb}  &0 & 0 &  \eb & \frac{\cb}{\cc}  \\
\end{pmatrix}
\end{equation*}
The basis for the Maurer-Cartan forms of $G_4$ is provided by the four forms 
\begin{equation*}
\delta^1 := \frac{d \cc}{\cc} \quad, \quad \delta^2 :=  i \, \ee \frac{d \cc}{\cc^2} - i\, \frac{\ee \, d \cb}{\cc \cb} - i\, \frac{d \ee}{\cc} \quad ,
\quad \ov{\delta^1} \quad , \quad \ov{\delta^2}.
\end{equation*}

\section{Absorption of torsion and normalisation: fourth loop} \label{section:step4}
At this stage we could compute the structure equations enjoyed by the base coframe $(\rho_0, \hat{\kappa}_0, \check{\zeta}_0, \ov{\hat{\kappa}_0} , 
\ov{\check{\zeta}_0})$, but as this involves rather lenghty computations, we procceed slightly differently from here. We just substitute the parameter $\dd$ by its 
normalization in the set of structure equations at the third loop. We have to take into account the fact that $d \dd$ is modified accordingly. Indeed we have:
\begin{equation*}
d \dd = - i \ee \frac{\cb}{\cc} - \frac{i}{2} \, \frac{\ee^2 \cb}{\cc} \left( \frac{ d \cb}{\cb} - \frac{d \cc}{\cc} \right) + i \, H \, \frac{\cc}{\cb} \left(
\frac{d \cc}{\cc} - \frac{d \cb}{\cb} \right) + i \, \frac{\cc}{\cb} \, d H
\end{equation*}
The forms $\gamma^1$ and $\gamma^2$ are not modified as they do not involve terms in $d \dd$, but this is not the case for $\gamma^3$ which is transformed as:
\begin{align*}
\gamma^3  & = \frac{d \dd }{\cc \cb} + i \, \frac{\ee}{\cc^2} - \frac{\dd \, d \cc}{\cc^2 \cb^2} - i \, \ee^2 \frac{d \cc}{\cc^3} + \frac{\dd \, d \cb}{\cc \cb^2} + 
i \,\frac{\ee^2 \, d\cb}{\cb \cc^2} \\
          & = i \, \frac{d H}{\cb^2} 
\end{align*}
The expressions of $d \rho$ and $d \kappa$ are thus unchanged from the expressions given by the structure equations at the third step, 
except the fact that we shall replace $\dd$ by 
$ - \frac{i}{2} \frac{\ee^2 \cb}{\cc} + i \, \frac{\cc}{\cb} \, H$ 
in the expression of each torsion coefficient $V_{\smallbullet \smallbullet}^{\smallbullet}$ and the fact that the forms $\gamma^1$ and $\gamma^2$ shall be replaced by the forms
$\delta^1$ and $\delta^2$, that is:

\begin{dmath*}
d \rho
 =
\delta^1 \wedge \rho +  \overline{\delta^{1}} \wedge \rho \\
+
V^{\rho}_{\rho \kappa} \, \rho \wedge \kappa
+
V^{\rho}_{\rho \zeta} \, \rho \wedge \zeta 
+
V^{\rho}_{\rho \overline{\kappa}} \, \rho \wedge \overline{\kappa}
+
V^{\rho}_{\rho \overline{\zeta}} \, \rho \wedge \overline{\zeta}
+
i \, \kappa \wedge \overline{\kappa},
\end{dmath*}

and
\begin{dmath*}
d \kappa
 =
\delta^{1} \wedge \kappa + \delta^{2} \wedge \rho \\
+
V^{\kappa}_{\rho \kappa} \, \rho \wedge \kappa
+
V^{\kappa}_{\rho \zeta} \, \rho \wedge \zeta
+
V^{\kappa}_{\rho \overline{\kappa}} \, \left. \rho \wedge \overline{\kappa} \right.
+
V^{\kappa}_{\rho \overline{\zeta}} \,\left.  \rho \wedge \overline{\zeta} \right.
+
V^{\kappa}_{\kappa \zeta} \, \kappa \wedge \zeta
+
V^{\kappa}_{\kappa \overline{\kappa}} \, \kappa \wedge \overline{\kappa}
+
\zeta \wedge \overline{\kappa}.
\end{dmath*}

The computation of $d \zeta$ involves the expression of the form $\gamma^3$ and is therefore modified as 
\begin{dmath*}
d \zeta
=
i \, \frac{d H}{\cb^2} \wedge \rho + i \, \delta_{2} \wedge \kappa + \delta_{1} \wedge 
\zeta - \overline{\delta_{1}} \wedge \zeta \\
+
V^{\zeta}_{\rho \kappa}  \, \rho \wedge \kappa
+
V^{\zeta}_{\rho \zeta} \, \rho \wedge \zeta
+
V^{\zeta}_{\rho \overline{\kappa}} \, \rho \wedge \overline{\kappa} 
+
V^{\zeta}_{\rho \overline{\zeta}} \, \rho \wedge \overline{\zeta} 
+
V^{\zeta}_{\kappa \zeta} \, \kappa \wedge \zeta
+
V^{\zeta}_{\kappa \overline{\kappa}} \, \kappa \wedge \overline{\kappa}
+
V^{\zeta}_{\kappa \overline{\zeta}} \, \kappa \wedge \overline{\zeta} 
+
V^{\zeta}_{\zeta \overline{\kappa}} \, \zeta \wedge \overline{\kappa}
+
V^{\zeta}_{\zeta \overline{\zeta}} \, \zeta \wedge \overline{\zeta}
.\end{dmath*}
The term  $\frac{d H}{\cb^2} \wedge \rho$ involves torsion terms in $\rho \wedge \kappa$, $\rho \wedge \zeta$, $\rho \wedge \ov{\kappa}$ and 
$\rho \wedge \ov{\zeta}$, which only affect the expressions of the coefficients $V_{\rho \kappa}^{\zeta}$,  $V_{\rho \zeta}^{\zeta}$,  $V_{\rho \ov{\kappa}}^{\zeta}$ and 
 $V_{\rho \zeta}^{\zeta}$. If we write $W_{\rho \kappa}^{\zeta}$,  $W_{\rho \zeta}^{\zeta}$,  $W_{\rho \ov{\kappa}}^{\zeta}$ and 
 $W_{\rho \zeta}^{\zeta}$ for these modified torsion coefficients, we get 

\begin{dmath*}
d \zeta
=
 i \, \delta_{2} \wedge \kappa + \delta_{1} \wedge 
\zeta - \overline{\delta_{1}} \wedge \zeta \\
+
W^{\zeta}_{\rho \kappa}  \, \rho \wedge \kappa
+
W^{\zeta}_{\rho \zeta} \, \rho \wedge \zeta
+
W^{\zeta}_{\rho \overline{\kappa}} \, \rho \wedge \overline{\kappa} 
+
W^{\zeta}_{\rho \overline{\zeta}} \, \rho \wedge \overline{\zeta} 
+
V^{\zeta}_{\kappa \zeta} \, \left.  \kappa \wedge \zeta \right.
+
V^{\zeta}_{\kappa \overline{\kappa}} \, \kappa \wedge \overline{\kappa}
+
V^{\zeta}_{\kappa \overline{\zeta}} \, \kappa \wedge \overline{\zeta} 
+
V^{\zeta}_{\zeta \overline{\kappa}} \, \zeta \wedge \overline{\kappa}
+
V^{\zeta}_{\zeta \overline{\zeta}} \, \zeta \wedge \overline{\zeta}
.\end{dmath*}

Before computing the actual value of the coefficients $W^{\smallbullet}_{\smallbullet \smallbullet}$, we proceed with the absorption phase. We make the two substitutions 
\begin{equation*}
\begin{aligned}
& \delta^1 : = \tilde{\delta}^1 + w^1_{\rho} \, \rho + w^1_{\kappa} \, \kappa + w^1_{\zeta} \, \zeta + w^1_{\ov{\kappa}} \, \ov{\kappa} + w^1_{\ov{\zeta}} \, \ov{\zeta}, \\
& \delta^2 : =  \tilde{\delta}^2 + w^2_{\rho} \, \rho + w^2_{\kappa} \, \kappa + w^2_{\zeta} \, \zeta + w^2_{\ov{\kappa}} \, \ov{\kappa} + w^2_{\ov{\zeta}} \, \ov{\zeta} 
\end{aligned}
\end{equation*}
in the previous equations.
We get:
\begin{dmath*}
d \rho = \tilde{\delta}^1 \wedge \rho +  
\overline{\tilde{\delta}^{1}} \wedge \rho \\
+ \left( V_{\rho \kappa}^{\rho} - w^1_{\kappa}- \ov{w^1_{\ov{\kappa}}} \right) \left. \rho \wedge \kappa \right. + 
\left( V_{\rho \zeta}^{\rho} - w^1_{\zeta} - \ov{w^1_{\ov{\zeta}}} \right) \left. \rho \wedge \zeta \right.
+ \left( V_{\rho \ov{\kappa}} - w^1_{\ov{\kappa}} -\ov{w^1_{\kappa}} \right) \left. \rho \wedge \ov{\kappa} \right.
+ \left( V^\rho_{\rho \ov{\zeta}} - \ov{w^1_{\zeta}} - w^1_{\ov{\zeta}} \right) \left. \rho \wedge \ov{\zeta} \right.,
\end{dmath*} 

\begin{dmath*}
d \kappa = \tilde{\delta}^{1} \wedge \kappa + \tilde{\delta}^{2} \wedge \rho \\
+ \left( V_{\rho \kappa}^{\kappa} - w^2_{\kappa} + w^{1}_{\rho} \right) \left. \rho \wedge \kappa \right. + \left( V_{\rho \zeta}^{\kappa}
 - w^2_{\zeta} \right) \left. \rho \wedge \zeta \right. + \left( V^{\kappa}_{\rho \ov{\kappa}} - w^2_{\ov{\kappa}} \right) \left. \rho \wedge \ov{\kappa} \right.
+ \left( V_{\rho \ov{\zeta}}- w^2_{\ov{\zeta}} \right) \left. \rho \wedge \ov{\zeta} \right. + \left( V_{\kappa \zeta}^{\kappa} - w^1_{\zeta} \right) 
\left. \kappa \wedge \zeta \right. + \left( V^{\kappa}_{\kappa \ov{\kappa}} - w^1_{\ov{\kappa}} \right) \left. \kappa \wedge \ov{\kappa} \right. 
+ \zeta \wedge \ov{\kappa} - w^1_{\ov{\zeta}} \left. \kappa \wedge \ov{\zeta} \right.,
\end{dmath*}
and 
\begin{dmath*}
d \zeta = i \, \tilde{\delta_{2}} \wedge \kappa + \tilde{\delta_{1}} \wedge 
\zeta - \overline{\tilde{\delta_{1}}} \wedge \zeta \\
+\left( W^{\zeta}_{\rho \kappa}  + i \, w_{\rho}^2 \right) \left. \rho \wedge \kappa \right. 
+ \left( W^{\zeta}_{\rho \zeta} + w^1_{\rho} - \ov{w^1_{\rho}} \right) \left. \rho \wedge \zeta \right. 
+  W^{\zeta}_{\rho \ov{\kappa}} \left. \rho \wedge \ov{\kappa} \right. +
 W^{\zeta}_{\rho \ov{\zeta}}  \left. \rho \wedge \ov{\zeta} \right. +
\left( V^{\zeta}_{\kappa \ov{\kappa}} - i \, w^2_{\ov{\kappa}} \right) \left. \kappa \wedge \ov{\kappa} \right.
+ \left( V^{\zeta}_{\kappa \ov{\zeta}} - i \, w^2_{\ov{\zeta}} \right) \left. \kappa \wedge \ov{\zeta} \right.
+ \left( V^{\zeta}_{\zeta \ov{\kappa}} - w^1_{\ov{\kappa}} + \ov{w^1_{\kappa}} \right) \left. \zeta \wedge \ov{\zeta} \right.
.\end{dmath*}
From the last equation, we immediately see that $ W^{\zeta}_{\rho \ov{\kappa}}$ and $W^{\zeta}_{\rho \ov{\zeta}} $ are two new essential torsion coefficients.
We find the remaining ones by solving the set of equations:

\begin{alignat*}{3}
w^1_{\kappa} + \ov{w^1_{\ov{\kappa}}}  &= V^{\rho}_{\rho \kappa}, 
& \qquad \qquad w^1_{\ov{\kappa}} + \ov{w^1_{\kappa}}  &= V^{\rho}_{\rho \ov{\kappa}}, 
& \qquad \qquad w^1_{\zeta} + \ov{w^1_{\ov{\zeta}}}  &= V^{\rho}_{\rho \zeta},  \\
\ov{w^1_{\zeta}} + w^1_{\ov{\zeta}} &= V^{\rho}_{\rho \ov{\zeta}}, 
& \qquad \qquad w^2_{\kappa} - w^1_{\rho} & = V^{\kappa}_{\rho \kappa}, 
& \qquad \qquad w^2_{\ov{\kappa}} &= V^{\kappa}_{\rho \ov{\kappa}}, \\
w^2_{\zeta}& = V^{\kappa}_{\rho \zeta}, 
& \qquad \qquad w^2_{\ov{\zeta}} &= V^{\kappa}_{\rho \ov{\zeta}}, 
& \qquad \qquad w^1_{\zeta}& = V^{\kappa}_{\kappa \zeta}, \\
w^1_{\ov{\zeta}} &= 0, 
& \qquad \qquad w^1_{\ov{\kappa}} &= V^{\kappa}_{\kappa \ov{\kappa}}, 
& \qquad \qquad  - i \, w^2_{\rho} &= V^{\zeta}_{\rho \kappa}, \\
- w^1_{\rho} + \ov{w^1_{\rho}} &= V^{\zeta}_{\rho \zeta}, 
& \qquad \qquad w^1_{\kappa} - \ov{w^1_{\ov{\kappa}}} - i\, w^2_{\zeta} &= - V^{\zeta}_{\kappa \zeta}, 
& \qquad \qquad i \, w^2_{\ov{\kappa}} &= V^{\zeta}_{\kappa \ov{\kappa}}, \\ 
w^1_{\kappa} - \ov{w^1_{\kappa}} &= V^{\zeta}_{\zeta \ov{\kappa}},
& \qquad \qquad i \, w^2_{\ov{\zeta}} &= V^{\zeta}_{\kappa \ov{\zeta}},
& \qquad \qquad w^1_{\ov{\zeta}} - \ov{w^1_{\zeta}} &= V^{\zeta}_{\zeta \ov{\zeta}},
\end{alignat*}

which lead easily as before to:
\begin{equation} \label{absorption}
\left\{
\begin{aligned}
w^1_{\kappa}  & = \ov{V^{\rho}_{\rho \ov{\kappa}}}, \\
w^1_{\ov{\kappa}} & = V^{\kappa}_{\kappa \ov{\kappa}}, \\
w^1_{\zeta} & = V^{\rho}_{\rho \zeta}, \\
w^1_{\ov{\zeta}} & = 0, \\
w^2_{\ov{\kappa}} & = V^{\kappa}_{\rho \ov{\kappa}}, \\
w^2_{\ov{\zeta}} & = V^{\kappa}_{\rho \ov{\zeta}}, \\
w^2_{\zeta} & = V^{\kappa}_{\rho \zeta}, \\
w^2_{\kappa} & = V^{\kappa}_{\rho \kappa} + w^1_{\rho}, \\
w^2_{\rho}& = W^{\zeta}_{\rho \kappa} \\
-w^1_{\rho} + \ov{w^1_{\rho}} & = W_{\rho \zeta}^{\zeta}.
\end{aligned}
\right.
\end{equation}
Eliminating the $w^{\smallbullet}_{\smallbullet}$ from (\ref{absorption}), we get one additionnal condition on the $W^{\smallbullet}_{\smallbullet \smallbullet}$ which has not yet been checked, namely
that $W_{\rho \zeta}^{\zeta}$ shall be purely imaginary.
We now need to compute the two essential torsion coefficients $ W^{\zeta}_{\rho \ov{\kappa}}$ and $W^{\zeta}_{\rho \ov{\zeta}} $.
As they both involves the term $d H \wedge \rho$, we start with the computation of this term.
Standard differentiation with respect to base coframe $(\rho_0, \kappa_0, \zeta_0, \ov{\kappa_0}, \ov{\zeta_0})$ yields:
\begin{equation*}
d H = \mathcal{T}(H) \, \rho_0 + \LL(H) \, \kappa_0 + \mathcal{K}(H) \, \zeta_0 + \Lb(H) \, \ov{\kappa_{0}} + \ov{\mathcal{K}}(H) \,  \ov{\zeta_0}.
\end{equation*}
Taking the wedge product with $\rho$ and using the fact that 
\begin{equation*}
\kappa_0 \wedge \rho  = \hat{\kappa}_0 \wedge \rho 
\end{equation*}
and
\begin{equation*}
\zeta_0 \wedge \rho = \frac{\check{\zeta}_0}{\Lbk} \wedge \rho, 
\end{equation*}
which is easily seen from the definitions of $\hat{\kappa}_0$ and $\check{\zeta}_0$, we get:
\begin{equation*}
d H \wedge \rho  =  \left(\LL(H) \, \hat{\kappa}_0 + \frac{\mathcal{K}(H)}{\Lbk} \, \check{\zeta_0} +  \Lb(H) \, \ov{\hat{\kappa}_{0}} + 
\frac{\ov{\mathcal{K}}(H)}{\Lkb} \,  
\ov{\check{\zeta_0}} \right) \wedge \rho.
\end{equation*}
We now use the expressions of the $1$-forms  $\hat{\kappa}_0$ and $\check{\zeta}_0$ in terms of $\rho$, $\kappa$ and $\zeta$, which are deduced by the use of
(\ref{eq:change}),  that is:
\begin{equation*}
\left\{
\begin{aligned}
\hat{\kappa}_{0} & = i \, \frac{\ee}{\cc^2} \, \rho + \frac{1}{\cc} \, \kappa \\
\check{\zeta}_{0} & = - i \, \frac{1}{2} \frac{\ee^2 \cb}{\cc^3} \, \rho - \frac{\ee \cb}{\cc^2} \, \kappa + \frac{\cb}{\cc} \, \zeta.
\end{aligned}
\right.
\end{equation*}
As a result, we get:
\begin{multline*}
d H \wedge \rho =
\left( \frac{\ee \cb}{\cc^2}  \frac{\mathcal{K}(H)}{\Lbk}- \frac{\LL(H)}{\cc} \right) \left. \rho \wedge \kappa \right. - \frac{\cb}{\cc} \, 
\frac{\mathcal{K}(H)}{\Lbk} 
\left. \rho \wedge \zeta \right. \\
+ \left( \frac{\eb \cc}{\cb^2} \frac{\mathcal{K}(H)}{\Lbk}
- \frac{\Lb(H)}{\cb} \right) \left. \rho \wedge \ov{\kappa} \right.  - \frac{\cc}{\cb} \, \frac{\ov{\mathcal{K}}(H)}{\Lbk} \left. 
\rho \wedge \ov{\zeta} \right.
.\end{multline*}
Inserting this equation in the expression of $d\zeta$, we find that:
\begin{dmath*}
d \zeta
=
 i \, \delta_{2} \wedge \kappa + \delta_{1} \wedge 
\zeta - \overline{\delta_{1}} \wedge \zeta \\
 +
\left( V^{\zeta}_{\rho \kappa} + \frac{i}{\cc^2 \cb} \frac{\mathcal{K}(H)}{\Lbk} - \frac{ i }{\cc \cb^2} \, \LL (H) \right)  \left. \rho \wedge \kappa \right. 
 +
\left( V^{\zeta}_{\rho \zeta} - \frac{i}{\cc \cb} \, \frac{\mathcal{K}(H)}{\Lbk}  \right) \left.  \rho \wedge \zeta \right. 
+
\left( V^{\zeta}_{\rho \overline{\kappa}} + i \, \frac{\eb \cc}{\cb^4} \, \frac{\ov{\mathcal{K}}(H)}{\Lkb} - \frac{i}{\cb^3} \,  \Lb (H) \right) \left. \rho \wedge 
\overline{\kappa} 
\right.
+
\left(
V^{\zeta}_{\rho \overline{\zeta}} - i \, \frac{\cc}{\cb^3} \, \frac{\ov{\mathcal{K}}(H)}{\Lkb}
\right) \left. \rho \wedge \overline{\zeta} \right. 
+
V^{\zeta}_{\kappa \zeta} \, \left. \kappa \wedge \zeta \right. 
+
V^{\zeta}_{\kappa \overline{\kappa}} \, \left. \kappa \wedge \overline{\kappa} \right.
+
V^{\zeta}_{\kappa \overline{\zeta}} \, \left. \kappa \wedge \overline{\zeta} \right. 
+
V^{\zeta}_{\zeta \overline{\kappa}} \, \left. \zeta \wedge \overline{\kappa} \right.
+
V^{\zeta}_{\zeta \overline{\zeta}} \, \left. \zeta \wedge \overline{\zeta} \right.
.\end{dmath*}
We thus have 
\begin{equation}
\label{eq:int1}
W^{\zeta}_{\rho \ov{\zeta}} = V^{\zeta}_{\rho \ov{\zeta}}   - i \, \frac{\cc}{\cb^3} \, \frac{\ov{\mathcal{K}}(H)}{\Lkb}
\end{equation}
and
\begin{equation}
\label{eq:int1bis}
W^{\zeta}_{\rho \ov{\kappa}} =  V^{\zeta}_{\rho \overline{\kappa}} + i \, \frac{\eb \cc}{\cb^4} \,  \frac{\ov{\mathcal{K}}(H)}{\Lkb} - \frac{i}{\cb^3} \, \Lb (H). 
\end{equation}
We first compute $W^{\zeta}_{\rho \ov{\zeta}}$.
Performing the substitution $\dd = - \frac{i}{2} \, \frac{e^2 \cb}{cc} + i \, \frac{\cc}{\cb} \, H$ in $V^{\zeta}_{\rho \ov{\zeta}}$ gives
\begin{equation}
\label{eq:int2}
V^{\zeta}_{\rho \ov{\zeta}} = -2 i \, \frac{\cc}{\cb^3} \, \frac{\Lbkb}{\Lkb} \, H.
\end{equation}
On the other hand, straightforward computations using the commutation relations given by the set of equations (\ref{eq:ls}) lead to:
\begin{multline*}
\ov{\mathcal{K}}(H) = - \frac{4}{9} \, \Lbkb \frac{\Lb \left( \Lbk \right)^2}{\Lbk^2} - \frac{1}{9} \, \Lbkb \, \frac{\Lb \left(\Lbk \right)}{\Lbk} \, \ov{P} 
+ \frac{2}{9} \, \Lbkb^2 \, \ov{P}^2  \\ + \frac{1}{3} \, \Lbkb \, \frac{\Lb \left( \Lb \left( \Lbk \right) \right)}{\Lbk} - \frac{1}{3} \, \Lbkb \,
 \Lb\left( \ov{P} \right),
\end{multline*}
that is:
\begin{equation*}
\ov{\mathcal{K}}(H) = - 2 \, \Lbkb \, H.
\end{equation*}
Combining this with (\ref{eq:int1}) and (\ref{eq:int2}) leads to 
\begin{equation*}
W^{\zeta}_{\rho \ov{\zeta}} = 0,
\end{equation*}
which therefore do not provide any new normalization of the group parameters.
We now turn our attention on $W^{\zeta}_{\rho \ov{\kappa}}$. As before, the substitution $\dd =  - \frac{i}{2} \, \frac{e^2 \cb}{cc} + i \, \frac{\cc}{\cb} \, H$ gives
\begin{multline*}
V^{\zeta}_{\rho \ov{\kappa}} =
2i \, \frac{\eb \cc}{\cb^4} \, \frac{\Lbkb}{\Lkb} \, H + \frac{i}{\cb^3} \left(  \frac{4}{3} \, \frac{\Lb \left( \Lbk \right)}{\Lbk} + \ov{P} \right) H \\ +
i \, \frac{\ee}{\cb^2 \cc} \left(- \frac{1}{3} \,  \frac{\Lb \left( \Lbk \right)}{\Lbk} - \frac{2}{9} \, \ov{P}^2 + \frac{1}{9} \, 
\frac{\Lb \left( \Lbk \right)}{\Lbk} \, \ov{P} + \frac{4}{9} \, \frac{\Lb \left( \Lbk \right)^2}{\Lbk^2} + \frac{1}{3} \, \Lb (P) - 2 \, H \right),
\end{multline*}
that is, taking into account the expression of $H$,
\begin{equation*}
V^{\zeta}_{\rho \ov{\kappa}} =
2i \, \frac{\eb \cc}{\cb^4} \, \frac{\Lbkb}{\Lkb} \, H + \frac{i}{\cb^3} \left(  \frac{4}{3} \, \frac{\Lb \left( \Lbk \right)}{\Lbk} + \ov{P} \right) H.
\end{equation*}
Combining this equation with (\ref{eq:int1bis}), we thus get the value of  $W^{\zeta}_{\rho \ov{\kappa}}$:
\begin{align*}
W^{\zeta}_{\rho \ov{\kappa}} & = i \, \frac{\eb \cc}{\cb^4} \, \frac{1}{\Lkb} \left( 2 \, \Lbkb \,  H + \ov{\mathcal{K}}(H) \right) 
+ \frac{i}{\cb^3}  \left[ \frac{2}{3} \left( 2 \, \frac{\Lb \left( \Lbk \right) }{\Lbk} + \ov{P} \right) H - \Lb(H) \right] \\
& = \frac{i}{\cb^3} \left[ \frac{2}{3} \left( 2 \, \frac{\Lb \left( \Lbk \right) }{\Lbk} + \ov{P} \right) H - \Lb(H) \right],
\end{align*}
as the last equality follows from the relation (\ref{eq:int2}).
This provide us with a new essential torsion coefficient, leading to a new invariant of the problem.
Indeed we define the function $J$ by:
\begin{equation*}
\ov{J} := \left[ \frac{2}{3} \left( 2 \, \frac{\Lb \left( \Lbk \right) }{\Lbk} + \ov{P} \right) H - \Lb(H) \right].
\end{equation*}
If $J$ does not vanish, one can perform the normalization 
$\cb^3 := \ov{J}$. We now give the expression of the invariant $J$ in terms of the functions $k$, $\ov{P}$ and their coframe derivatives.
Straightforward computations lead to 
\begin{dmath*}
\Lb(H) = - \frac{4}{9} \, \frac{\Lb \left( \Lbk \right)^3}{\Lbk^3} + \frac{11}{18} \, 
\frac{\Lb \left( \Lbk \right) \Lb \left( \Lb \left( \Lbk \right) \right)}{\Lbk^2} 
 - \frac{1}{18} \, \frac{\Lb \left( \Lbk \right)^2}{\Lbk^2} \, \ov{P}   + 
\frac{1}{18}\, \frac{\Lb  \left( \Lbk \right) }{\Lbk} \, \ov{P} + \frac{1}{18} \, \frac{\Lb \left(\Lbk \right) \, \Lb \left( \ov{P} \right)}{\Lbk}
- \frac{2}{9} \, \ov{P} \, \Lb \left( \ov{P} \right)
- \frac{1}{6} \, \frac{\Lb \left( \Lb \left( \Lb \left( \Lbk \right) \right) \right)}{\Lbk} 
+ \frac{1}{6} \, \Lb \left( \Lb \left( \ov{P} \right) \right),
\end{dmath*}
which in turn gives the expression of $\ov{J}$:
\begin{dmath*}
\ov{J} = \frac{5}{18} \, \frac{\Lb \left( \Lbk \right)^2}{\Lbk^2} \, \ov{P} + \frac{1}{3} \, \ov{P} \, \Lb \left( \ov{P} \right) -
\frac{1}{9} \,  \frac{\Lb \left( \Lbk \right)}{\Lbk} \, \ov{P}^2 + \frac{20}{27} \, \frac{ \Lb \left( \Lbk \right)^3}{\Lbk^3} - \frac{5}{6} 
\, \frac{ \Lb \left( \Lbk \right) \, \Lb \left( \Lb \left( \Lbk \right) \right)}{\Lbk^2} + \frac{1}{6} \, \frac{\Lb \left( \Lbk \right) \, \Lb(\ov{P})}{\Lbk}
- \frac{1}{6} \, \frac{\Lb \left( \Lb \left( \Lbk \right) \right)}{\Lbk} \, \ov{P} - \frac{2}{27} \, \ov{P}^3 - 
\frac{1}{6} \, \Lb \left( \Lb \left( \ov{P} \right) \right) + \frac{1}{6} \,  \frac{\Lb \left( \Lb \left( \Lb \left( \Lbk \right) \right) \right)}{\Lbk} 
\end{dmath*}.
\section{Case $J \neq 0$} \label{section:J}
We now turn our attention on the case $J \neq 0$. We show here how the last group parameter $\ee$ can be normalized,
reducing thus the $G$-equivalence problem to the study of an ${e}$-structure.
From the normalization $c^3 = J$, we get $$\frac{d \cc}{\cc} = \frac{1}{3} \, \frac{d J}{J}.$$ The expression of $d\rho$
is thus modified as:
\begin{equation*}
d \rho  = \left.\frac{1}{3} \left( \frac{d J}{J} + \frac{ d \ov{J}}{\ov{J}} \right) \wedge \rho \right. 
+ V^{\rho}_{\rho \kappa} \left. \rho \wedge \kappa \right. + 
V^{\rho}_{\rho \zeta} \left. \rho \wedge \zeta \right. + V^{\rho}_{\rho \ov{\kappa}} \left. \rho \wedge \ov{\kappa} \right. 
+ V^{\rho}_{\rho \zeta} \left. \rho \wedge \ov{\zeta}
\right. + i \, \kappa \wedge \ov{\kappa},
\end{equation*}
which rewrites
\begin{equation*}
d \rho = S^{\rho}_{\rho \kappa} \left. \rho \wedge \kappa \right. + S^{\rho}_{\rho \zeta} \left. \rho \wedge \zeta \right. + S
^{\rho}_{\rho \ov{\kappa}} \left. \rho \wedge \ov{\kappa} \right.
+ S^{\rho}_{\rho \ov{\zeta}} \left. \rho \wedge \ov{\zeta} \right. + i \, \left. \kappa \wedge \ov{\kappa} \right..
\end{equation*}
From this expression, we see that $S^{\rho}_{\rho \kappa}$, $S^{\rho}_{\rho \zeta}$, $S^{\rho}_{\rho \ov{\kappa}}$ and $S^{\rho}_{\rho \ov{\zeta}}$ 
are essential torsion coefficients. We now turn our attention on the computation of  $S^{\rho}_{\rho \ov{\kappa}}$.

The expression of $d J \wedge \rho$ is obtained in a similar way as that of $d H \wedge \rho$, namely:
\begin{multline*}
d J \wedge \rho =
\left( \frac{\ee \cb}{\cc^2} \,  \frac{\mathcal{K}(J)}{\Lbk}- \frac{\LL(J)}{\cc} \right) \left. \rho \wedge \kappa \right. 
- \frac{\cb}{\cc} \, \frac{\mathcal{K}(J)}{\Lbk} 
\left. \rho \wedge \zeta \right. \\
+ \left( \frac{\eb \cc}{\cb^2} \frac{\ov{\mathcal{K}}(J)}{\Lbk}
- \frac{\Lb(J)}{\cb} \right) \left. \rho \wedge \ov{\kappa} \right.  - \frac{\cc}{\cb} \, \frac{\ov{\mathcal{K}}(J)}{\Lkb} \left. 
\rho \wedge \ov{\zeta} \right.
.\end{multline*}
Replacing $\cc$ by $J^{1/3}$, we thus get that 
\begin{multline*}
\left( \frac{d J}{J} + \frac{d \ov{J}}{\ov{J}} \right) \,  \wedge \,  \rho = \\ \left[
 \frac{ \ee  }{ \Lbk} \, \frac{\ov{J}^{1/3}}{J^{2/3}} \left( \frac{\mathcal{K}(J)}{J} + \frac{\mathcal{K} \left( \ov{J} \right) }{\ov{J}} \right) 
- \frac{\LL(J)}{J^{4/3}} - 
\frac{\LL \left( \ov{J} \right)}{J^{1/3} \, \ov{J}} \right] \left. \rho \wedge \kappa \right. \\
- \frac{1}{\Lbk} \, \frac{\ov{J}^{1/3}}{J^{1/3}}  \, \left( \frac{\mathcal{K}(J)}{J} + \frac{\mathcal{K}\left( \ov{J} \right)}{\ov{J}} \right) 
\left. \rho \wedge \zeta \right. \\
+ \left[
 \frac{ \eb  }{ \Lkb} \, \frac{J^{1/3}}{\ov{J}^{2/3}} \left( \frac{\ov{\mathcal{K}}(J)}{J} + \frac{\ov{\mathcal{K}} \left( \ov{J} \right) }{\ov{J}} \right) 
- \frac{\Lb(J)}{J \ov{J}^{1/3}} - 
\frac{\Lb \left( \ov{J} \right)}{\ov{J}^{4/3}} \right] \left. \rho \wedge \ov{\kappa} \right. \\
- \frac{1}{\Lkb} \, \frac{J^{1/3}}{\ov{J}^{1/3}} \, \left( \frac{\ov{\mathcal{K}}(J)}{J} + \frac{\ov{\mathcal{K}}\left( \ov{J} \right)}{\ov{J}} \right)
 \left. \rho \wedge \ov{\zeta} \right.
\end{multline*}
On the other hand, after replacing $\cc$ by its normalization in $V^{\rho}_{\rho \ov{\kappa}}$, we get:
\begin{equation*}
V^{\rho}_{\rho \ov{\kappa}}
=
-
\frac {{\sf e}} {J^{1/3}}
+
\frac {1 }{3} \, \frac{1}{\ov{J}^{1/3}} \, \frac{\Lb \left( \Lb( 
k)  \right)} {\Lbk}
+
\frac{2}{3} \, \frac {\ov{P}} {\ov{J}^{1/3}} 
+
\eb \, \frac {J^{1/3}} {\ov{J}^{2/3}} \, \frac{\Lbkb }{\Lkb}. 
\end{equation*}
We thus obtain the following essential torsion coefficient, which depends on $\ee$ and $\eb$:
\begin{multline*}
S^{\rho}_{\rho \ov{\kappa}} = -
\frac{\ee}{J^{1/3}} + \frac{\eb}{\Lkb} \, \frac{J^{1/3}}{\ov{J}^{2/3}} \left( \Lbkb + \frac{1}{3} \, \frac{\ov{\mathcal{K}}( \ov{J})}{\ov{J}} +
\frac{1}{3} \,  \frac{\ov{\mathcal{K}}(J)}{J} \right) \\
+ \frac{1}{3} \, \frac{1}{\ov{J}^{1/3}} \left( 2\, \ov{P} + \frac{\Lb \left( \Lbk \right)}{\Lbk} - \frac{\Lb(J)}{J} - \frac{\Lb(\ov{J})}{\ov{J}} \right).
\end{multline*}
The actual computation of the other essential torsion coefficients $S^{\rho}_{\rho \kappa}$, $S^{\rho}_{\rho \zeta}$ and $S^{\rho}_{\rho \ov{\zeta}}$ 
do not lead to any useful equation depending in $\ee$.
On the other hand, the study of the third structure equation provides us with another meaningful essential torsion coefficient.
Indeed we have:
\begin{dmath*}
d \zeta
=
 i \, \delta_{2} \wedge \kappa + \left. \frac{1}{3} \left(\frac{d J}{J} - \frac{d \ov{J}}{\ov{J}}  \right) \wedge 
\zeta \right. \\
+
W^{\zeta}_{\rho \kappa}  \, \rho \wedge \kappa
+
W^{\zeta}_{\rho \zeta} \, \rho \wedge \zeta
+
W^{\zeta}_{\rho \overline{\kappa}} \, \rho \wedge \overline{\kappa} 
+
W^{\zeta}_{\rho \overline{\zeta}} \,\left.  \rho \wedge \overline{\zeta} \right.
+
V^{\zeta}_{\kappa \zeta} \, \left. \kappa \wedge \zeta \right.
+
V^{\zeta}_{\kappa \overline{\kappa}} \, \kappa \wedge \overline{\kappa}
+
V^{\zeta}_{\kappa \overline{\zeta}} \, \kappa \wedge \overline{\zeta} 
+
V^{\zeta}_{\zeta \overline{\kappa}} \, \zeta \wedge \overline{\kappa}
+
V^{\zeta}_{\zeta \overline{\zeta}} \, \zeta \wedge \overline{\zeta}
,\end{dmath*}
which, taking into account the facts that $W^{\zeta}_{\rho \ov{\kappa}} =0$ and that $W^{\zeta}_{\rho \ov{\zeta}}$ as been normalized to $1$, can be rewritten as
\begin{dmath*}
d \zeta
=
 i \, \delta_{2} \wedge \kappa \\
+
S^{\zeta}_{\rho \kappa}  \, \rho \wedge \kappa
+
S^{\zeta}_{\rho \zeta} \, \rho \wedge \zeta
+
\rho \wedge \overline{\zeta}
+
S^{\zeta}_{\kappa \zeta} \, \kappa \wedge \zeta 
+
S^{\zeta}_{\kappa \overline{\kappa}} \, \kappa \wedge \overline{\kappa}
+
S^{\zeta}_{\kappa \overline{\zeta}} \, \kappa \wedge \overline{\zeta} 
+
S^{\zeta}_{\zeta \overline{\kappa}} \, \zeta \wedge \overline{\kappa}
+
S^{\zeta}_{\zeta \overline{\zeta}} \, \zeta \wedge \overline{\zeta}
,\end{dmath*}
where the $S^{\smallbullet}_{\smallbullet \smallbullet}$ are new torsion coeficients.
We easily deduce from this equation that 
\begin{equation*}
S^{\zeta}_{\zeta \ov{\kappa}} = V^{\zeta}_{\zeta \ov{\kappa}} + \frac{1}{3} \left[
\frac{ \eb  }{ \Lkb} \, \frac{J^{1/3}}{\ov{J}^{2/3}} \left( \frac{\ov{\mathcal{K}}(J)}{J} - \frac{\ov{\mathcal{K}} \left( \ov{J} \right) }{\ov{J}} \right) 
- \frac{\Lb(J)}{J \ov{J}^{1/3}} +
\frac{\Lb \left( \ov{J} \right)}{\ov{J}^{4/3}} \right]
\end{equation*}
is an essential torsion coefficient.
From the expression of $V^{\zeta}_{\zeta \ov{\kappa}}$ obtained by performing the substitution $c := J^{\frac{1}{3}}$, we have 
\begin{multline*}
S^{\zeta}_{\zeta \ov{\kappa}} = \frac{\ee}{J^{1/3}} - \eb \, \frac{J^{1/3}}{\ov{J}^{2/3}} \, \frac{\Lbkb}{\Lkb} - 
\frac{1}{\ov{J}^{1/3}} \, \frac{\Lb \left( \Lbk \right)}{\Lbk} \\
+ \frac{1}{3} \left[
\frac{ \eb  }{ \Lkb} \, \frac{J^{1/3}}{\ov{J}^{2/3}} \left( \frac{\ov{\mathcal{K}}(J)}{J} - \frac{\ov{\mathcal{K}} \left( \ov{J} \right) }{\ov{J}} \right) 
- \frac{\Lb(J)}{J \ov{J}^{1/3}} +
\frac{\Lb \left( \ov{J} \right)}{\ov{J}^{4/3}} \right].
\end{multline*}
We now substract the two essential torsion coefficients that we have get so far:
\begin{multline*}
-S^{\rho}_{\rho \ov{\kappa}} + S^{\zeta}_{\zeta \ov{\kappa}} =
2 \, \frac{\ee}{J^{1/3}} - 2 \, \eb \, \frac{J^{1/3}}{\ov{J}^{2/3}} \, \frac{1}{\Lkb} \left( \Lbkb +  \frac{1}{3} \, \frac{\ov{\mathcal{K}}(\ov{J})}{\ov{J}} \right) \\
 + \frac{2}{3} \, \frac{1}{\ov{J}^{1/3}} \left( \frac{\Lb(\ov{J})}{\ov{J}} - 2\, \frac{\Lb \left( \Lbk \right) }{\Lbk} - \ov{P} \right).
\end{multline*}
From the full expression of $\mathcal{K}(J)$ in terms of the coframe derivatives, obtained by using extensively the commutations relations
(\ref{eq:ls}),
we find the relation:
\begin{equation*}
  \frac{1}{3} \, \ov{\mathcal{K}}\left(\ov{J}\right) + \Lbkb  \cdot \ov{J} = 0,
\end{equation*}
from which we deduce that the following expression:
\begin{equation*}
\frac{ \ee}{J^{1/3}} + \frac{1}{3} \, \frac{1}{\ov{J}^{1/3}} \left( \frac{\Lb(\ov{J})}{\ov{J}} - 2 \, \frac{\Lb \left( \Lbk \right) }{\Lbk} - \ov{P} \right)
\end{equation*}
is an essential torsion coefficient.
Setting this coefficient to zero, gives the normalization of $\ee$:
\begin{equation*}
\ee = \frac{1}{3} \, \frac{J^{1/3}}{\ov{J}^{1/3}} \left( - \frac{\Lb(\ov{J})}{\ov{J}} + 2 \, \frac{\Lb \left( \Lbk \right) }{\Lbk} + \ov{P} \right).
\end{equation*}

\section{Case $W \neq 0$} \label{section:W} 
We now assume that the fonction $W$ does not vanish on $M$, and we show how the group parameter $\ee$ can be normalized.
We choose the normalization $c:= W$. We recall that prior to this last normalization, the structure equations read:

\begin{dmath*}
d \rho
 =
\left.\delta^1 \wedge \rho  \right.+ \left.  \overline{\delta^{1}} \wedge \rho \right.\\
+
V^{\rho}_{\rho \kappa} \, \left. \rho \wedge \kappa \right.
+
V^{\rho}_{\rho \zeta} \, \left. \rho \wedge \zeta \right.
+
V^{\rho}_{\rho \overline{\kappa}} \, \left.  \rho \wedge \overline{\kappa} \right.
+
V^{\rho}_{\rho \overline{\zeta}} \, \left.  \rho \wedge \overline{\zeta} \right.
+
i \, \left. \kappa \wedge \overline{\kappa} \right.
\end{dmath*},

\begin{dmath*}
d \kappa
 =
\left. \delta^{1} \wedge \kappa \right. + \left.  \delta^{2} \wedge \rho \right.\\
+
V^{\kappa}_{\rho \kappa} \, \left. \rho \wedge \kappa \right.
+
V^{\kappa}_{\rho \zeta} \, \left.  \rho \wedge \zeta \right.
+
V^{\kappa}_{\rho \overline{\kappa}} \, \left. \rho \wedge \overline{\kappa} \right.
+
V^{\kappa}_{\rho \overline{\zeta}} \,\left.  \rho \wedge \overline{\zeta} \right.
+
V^{\kappa}_{\kappa \zeta} \, \left.  \kappa \wedge \zeta \right.
+
V^{\kappa}_{\kappa \overline{\kappa}} \, \left. \kappa \wedge \overline{\kappa} \right.
+
\left.\zeta \wedge \overline{\kappa} \right.
\end{dmath*}

and 

\begin{dmath*}
d \zeta
=
 i \, \left. \delta_{2} \wedge \kappa \right. + \left. \delta_{1} \wedge 
\zeta \right. - \left. \overline{\delta_{1}} \wedge \zeta \right.\\
+
W^{\zeta}_{\rho \kappa}  \, \left. \rho \wedge \kappa \right.
+
W^{\zeta}_{\rho \zeta} \,\left.  \rho \wedge \zeta \right.
+
W^{\zeta}_{\rho \overline{\kappa}} \, \left. \rho \wedge \overline{\kappa}  \right.
+
V^{\zeta}_{\kappa \zeta} \, \left.  \kappa \wedge \zeta \right.
+
V^{\zeta}_{\kappa \overline{\kappa}} \, \left. \kappa \wedge \overline{\kappa} \right.
+
V^{\zeta}_{\kappa \overline{\zeta}} \, \left. \kappa \wedge \overline{\zeta}  \right.
+
V^{\zeta}_{\zeta \overline{\kappa}} \, \left. \zeta \wedge \overline{\kappa} \right.
+
V^{\zeta}_{\zeta \overline{\zeta}} \, \left.  \zeta \wedge \overline{\zeta} \right.
,\end{dmath*}

where 
\begin{equation*}
\delta^1 = \frac{d \cc}{\cc} \quad, \quad \delta^2 =  i \, \ee \frac{d \cc}{\cc^2} - i\, \frac{\ee \, d \cb}{\cc \cb} - i\, \frac{d \ee}{\cc},
\end{equation*}
and 
\begin{equation*}
W^{\zeta}_{\rho \overline{\kappa}} =i \, \frac{\ov{J}}{\cb^3}.
\end{equation*}

As we have 
\begin{equation*}
 \delta^2 =  - i\, \frac{\ee \, d \cb}{\cc \cb} - i\, d \left( \frac{\ee}{\cc} \right),
\end{equation*}
it is convenient to introduce the new parameter $\epsilon$ defined by
\begin{equation*}
\epsilon := \frac{\ee}{\cc}.
\end{equation*}
With the normalization $c:=W$, we get: 
\begin{equation*}
\delta^1 = \frac{d W}{W},
\end{equation*}
\begin{equation*}
\delta^2 = - i \, d \epsilon - i \epsilon \frac{d \ov{W}}{W}
\end{equation*}
and
\begin{equation*}
W^{\zeta}_{\rho \overline{\kappa}} =i \, \frac{\ov{J}}{W^3}.
\end{equation*}

As a result, the new structure equations take the form:

\begin{dmath*}
d \rho = X^{\rho}_{\rho \kappa} \left. \rho \wedge \kappa \right. + X^{\rho}_{\rho \zeta} \left. \rho \wedge \zeta \right. + X
^{\rho}_{\rho \ov{\kappa}} \left. \rho \wedge \ov{\kappa} \right.
+ X^{\rho}_{\rho \ov{\zeta}} \left. \rho \wedge \ov{\zeta} \right. + i \, \left. \kappa \wedge \ov{\kappa} \right.,
\end{dmath*}

\begin{dmath*}
d \kappa = - i \, d \epsilon \wedge \rho \\
 + X^{\kappa}_{\rho \kappa} \, \left. \rho \wedge \kappa \right. + X^{\kappa}_{\rho \zeta}\, \left.  \rho \wedge \zeta \right.
 + X^{\kappa}_{\rho \ov{\kappa}} \, \left. \rho \wedge \ov{\kappa} \right.+
X^{\kappa}_{\rho \ov{\zeta}} \, \rho \wedge \ov{\zeta} +X^{\kappa}_{\kappa \zeta} \left. \kappa \wedge \zeta \right.
 + X^{\kappa}_{\kappa \ov{\kappa}} \,  \kappa \wedge \ov{\kappa} + X^{\kappa}_{\kappa \ov{\zeta}} \, 
 \left. \kappa \wedge \ov{\zeta} \right.
+ \zeta \wedge \ov{\kappa},
\end{dmath*}

\begin{dmath*}
d \zeta = d \epsilon \wedge \kappa \\
 + X^{\zeta}_{\rho \kappa} \, \left. \rho \wedge \kappa \right. + X^{\zeta}_{\rho \zeta}\,  \left. \rho \wedge \zeta \right.
 + X^{\zeta}_{\rho \ov{\kappa}} \, \left. \rho \wedge \ov{\kappa} \right. + X^{\zeta}_{\kappa \zeta} \, \left. \kappa \wedge \zeta \right.
 + X^{\zeta}_{\kappa \ov{\kappa}} \, \left. \kappa \wedge \ov{\kappa} \right. + X^{\zeta}_{\kappa \ov{\zeta}} \, 
\left. \kappa \wedge \ov{\zeta} \right.
+ X^{\zeta}_{\zeta \ov{\kappa}} \left. \zeta \wedge \ov{\kappa} \right.+ X^{\zeta}_{\zeta \ov{\zeta}} \, \left. \zeta \wedge \ov{\zeta} \right.
,\end{dmath*}
for a new set of torsion coefficients $X^{\smallbullet}_{\smallbullet \smallbullet}$.
The absorption process is straightforward and leads to the following essential torsion coefficients:
\begin{alignat*}{3}
& X^{\rho}_{\rho \kappa}, \qquad   X^{\rho}_{\rho \zeta}, \qquad  X^{\rho}_{\rho \ov{\kappa}}, \qquad
X^{\rho}_{\rho \ov{\zeta}},  \\   & X^{\kappa}_{\kappa \zeta}, \qquad   X^{\kappa}_{\kappa \ov{\kappa}},
\qquad X^{\kappa}_{\kappa \ov{\zeta}}, \qquad  X^{\zeta}_{\rho \zeta}, \\ & X^{\zeta}_{\rho \ov{\kappa}} ,
\qquad  X^{\zeta}_{\zeta \ov{\kappa} }, \qquad X^{\zeta}_{\zeta \ov{\zeta}}, \qquad  i \, X^{\zeta}_{\kappa \zeta} + X^{\kappa}_{\rho \zeta},\\
& i \, X^{\zeta}_{\kappa \ov{\kappa}} + X^{\kappa}_{\rho \ov{\kappa}}, \qquad 
i \, X^{\zeta}_{\kappa \ov{\zeta}} + X^{\kappa}_{\rho \ov{\zeta}}.
\end{alignat*}
The careful computation of the coefficient $X^{\kappa}_{\kappa \ov{\kappa}}$ gives:
\begin{equation*}
X^{\kappa}_{\kappa \ov{\kappa}} = \ov{\epsilon} \, \frac{\Kb \left(W \right)}{\ov{W} \Lkb} - \frac{\Lb \left( W \right)}{W \ov{W}} - 
\frac{1}{3} \, \frac{\Lb \left( \Lbk \right)}{W \Lbk} + \frac{1}{3} \, \frac{\ov{P}}{\ov{W}}.
\end{equation*}

The expression of $\Kb \left( W \right)$ can be simplified by using the commutations relations
(\ref{eq:ls}), as in the case of $\KK \left( J \right)$.
We find the relation:
\begin{equation*}
\Kb(W) + 2 \, \Lbk \, \ov{W} = 0,
\end{equation*}
from which we deduce that 
$X^{\kappa}_{\kappa \ov{\kappa}}$ rewrites:
\begin{equation*}
X^{\kappa}_{\kappa \ov{\kappa}} = - 2 \, \ov{\epsilon} - \frac{\Lb \left( W \right)}{W \ov{W}} - 
\frac{1}{3} \, \frac{\Lb \left( \Lbk \right)}{W \Lbk} + \frac{1}{3} \, \frac{\ov{P}}{\ov{W}}
.\end{equation*}
Setting this coefficient to zero, we get a normalization of $\epsilon$, and hence of $\ee$, provided that $\Kb \left(W \right)$ does not vanish on $M$, 
which is given by the following lemma:

\begin{lemma} \label{lemma:KbW}
$\Kb \left(W \right)$ does not vanish identically on $M$. 
\end{lemma}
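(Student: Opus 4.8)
The plan is to reduce the assertion to the algebraic identity already recorded immediately above the lemma, namely
\begin{equation*}
\Kb(W) + 2\,\Lbk\,\ov{W} = 0.
\end{equation*}
Granting this identity, the lemma follows in one line. Indeed, by the $2$-nondegeneracy hypothesis the function $\Lbk = \overline{\mathcal{L}_1}(k)$ vanishes nowhere on $M$, so $\Kb(W)$ vanishes identically exactly when $\ov{W}$ does, that is, exactly when $W$ vanishes identically. Since we are working in the branch $W \neq 0$, the function $W$ is by hypothesis \emph{not} identically zero, and therefore $\Kb(W)$ is not identically zero either. Under the genericity assumption of Definition \ref{definition:generic} one can say more: $W$ is then nowhere vanishing, so $\Kb(W) = -2\,\Lbk\,\ov{W}$ is a product of two nowhere-vanishing functions and is itself nowhere vanishing on $M$.

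Thus the genuine content is the identity $\Kb(W) = -2\,\Lbk\,\ov{W}$ itself, and this is where I would concentrate the effort. The strategy mirrors the two analogous simplifications carried out earlier for the invariants $H$ and $J$, where the commutation relations $(\ref{eq:ls})$ collapse $\ov{\mathcal{K}}(H)$ to $-2\,\Lbkb\,H$ and $\tfrac{1}{3}\,\ov{\mathcal{K}}(\ov{J})$ to $-\Lbkb\,\ov{J}$. Concretely, writing $\Kb = \ov{k}\,\Lb + \ov{\mathcal{L}_2}$ and applying $\Kb$ termwise to the five summands defining $W$ in $(\ref{eq:W})$, I would commute $\Kb$ past each occurrence of $\LL$, $\Lb$, $\KK$ and $\T$ using the bracket relations of $(\ref{eq:ls})$. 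The auxiliary facts $\KK(\ov{k}) = 0$ from $(\ref{eq:Kb})$ and the Jacobi-derived expressions for $\KK(P)$ and $\KK(\ov{P})$ are precisely what is needed to rewrite $\Kb\!\big(\frac{\LL(\Lbk)}{\Lbk}\big)$, $\Kb\!\big(\frac{\LL(\Lkb)}{\Lkb}\big)$ and the two third-order terms in a common shape; a systematic bookkeeping should then show that every contribution organizes into $-2\,\Lbk$ times the conjugate expression $\ov{W}$.

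The main obstacle is purely computational. Because $W$ is a rational combination of coframe derivatives of $k$ up to third order, $\Kb(W)$ a priori generates a large number of terms, and the cancellations that produce the clean proportionality constant $-2$ are delicate. The two points that must not be missed are the repeated use of $\KK(\ov{k}) = 0$, which annihilates many would-be contributions, and careful attention to the powers of $\Lbk$ carried in the denominators, since it is the covariance of $W$ under $\Kb$—exactly the feature shared with the $H$ and $J$ computations—that forces the answer to be proportional to $\ov{W}$ rather than to some unrelated combination. Once the identity is established, the conclusion of the lemma is the one-line argument above, and the normalization of $\epsilon$ (hence of $\ee$) proceeds at once, since the coefficient $\Kb(W)/(\ov{W}\,\Lkb) = -2\,\Lbk/\Lkb$ of $\ov{\epsilon}$ in $X^{\kappa}_{\kappa\ov{\kappa}}$ is then manifestly nowhere zero.
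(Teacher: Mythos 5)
Your proof stands or falls with the identity $\Kb(W) + 2\,\Lbk\,\ov{W} = 0$, which you lift from the running text just above the lemma. Unfortunately that display is a slip in the paper: the relation that the commutation relations (\ref{eq:ls}) actually produce, and the one on which the paper's own proof is built, is
\begin{equation*}
\Kb(W) \;+\; 2\,\Lkb\,\ov{W} \;+\; 2\,i\,\Tkb \;=\; 0,
\end{equation*}
with a residual term $2\,i\,\Tkb$ that is \emph{not} proportional to $\ov{W}$. You can see that the clean identity cannot be right from internal evidence alone: if it held, the lemma would be the one-line triviality you describe, so the paper would not need a proof by contradiction, would not write ``provided that $\Kb(W)$ does not vanish'', and Definition \ref{definition:generic} would not need to impose genericity of $\Kb(W)$ separately from that of $W$. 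Moreover the paper's proof deduces from $\Kb(W)=0$ that $W = i\,\Tk/\Lbk$ --- a deduction that is a non sequitur under your identity (which would give $W=0$ outright) and only makes sense with the $\Tkb$-term present. Your planned bookkeeping would itself detect the failure: using $\Kb(\Lbk) = -\,\Lbkb\,\Lbk$ and $\Kb(\Tk) = -\,\Lbkb\,\Tk - \Tkb\,\Lbk$ (consequences of (\ref{eq:ls}) together with $\Kb(k)=0$, the conjugate of (\ref{eq:Kb})), the last summand $\tfrac{i}{3}\,\Tk/\Lbk$ of $W$ contributes $-\tfrac{i}{3}\,\Tkb$ to $\Kb(W)$, whereas the matching summand of $-2\,\Lkb\,\ov{W}$ is $+\tfrac{2i}{3}\,\Tkb$; such $\Tkb$-terms do not reorganize into a multiple of $\ov{W}$. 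So the heart of your argument --- ``$\Kb(W)\equiv 0$ forces $W\equiv 0$ because $\Lbk$ never vanishes'' --- fails.

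What is missing is a second application of the commutation relations, which is the actual content of the paper's proof. Assume $\Kb(W) \equiv 0$; the corrected identity gives $\ov{W} = -\,i\,\Tkb/\Lkb$, i.e.\ $W = i\,\Tk/\Lbk$. Now apply $\Kb$ to this explicit expression:
\begin{equation*}
\Kb(W) \;=\; i\,\frac{\Kb(\Tk)}{\Lbk} \;-\; i\,\frac{\Tk\,\Kb(\Lbk)}{\Lbk^{2}}
\;=\; i\,\frac{-\,\Lbkb\,\Tk - \Tkb\,\Lbk}{\Lbk} \;+\; i\,\frac{\Tk\,\Lbkb}{\Lbk}
\;=\; -\,i\,\Tkb \;=\; \Lkb\,\ov{W}.
\end{equation*}
Since $\Kb(W) \equiv 0$ and $\Lkb$ vanishes nowhere by $2$-nondegeneracy, this forces $W \equiv 0$, contradicting the standing hypothesis $W \neq 0$ of the section; hence $\Kb(W)$ cannot vanish identically. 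Your closing remarks are salvageable --- genericity of $\Kb(W)$ in the sense of Definition \ref{definition:generic} does upgrade ``not identically zero'' to ``nowhere zero'', which is what the normalization of $\epsilon$ requires --- but the constant of proportionality $\Kb(W)/(\ov{W}\,\Lkb) = -2$ that you invoke at the very end is false for the same reason as above: that ratio is $-2 - 2i\,\Tkb/(\ov{W}\Lkb)$, and it is precisely the lemma, not a clean algebraic identity, that guarantees it is nonzero.
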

\begin{proof}
The computation of $\Kb(W)$, using the commutation relations (\ref{eq:ls}) leads to the following formula:
\begin{equation*}
\Kb(W) + 2\,  \Lkb \, \ov{W} +2  i \, \Tkb = 0.
\end{equation*}
If $\Kb(W) = 0$ then $W= i \frac{\Tk}{\Lbk}$ which implies  $\Kb(W) =- i \, \Tk$ (using (\ref{eq:ls}) once again), that is 
\begin{equation*}
\Kb(W) = \ov{W} \Lkb,
\end{equation*}
which gives a contradiction with the fact 
that $W \neq 0$.
\end{proof}

\section{Case $J=0$ and $W=0$} \label{section:prolongation}
We show that in this case, $M$ is biholomorphically equivalent to the light cone.
We start by showing that the coefficient $W^{\zeta}_{\rho \zeta}$ is purely imaginary, which implies that no further group reductions are allowed at this stage.
The full computation of this coefficient leads to:
\begin{dmath*}
i \, \cc \cb \,  W^{\zeta}_{\rho \zeta} = - \frac{1}{6} \, \Lb(P) - \frac{1}{6} \, \LL(\ov{P}) - \frac{2}{3} \, \frac{\cb \, \ee}{\cc} \, 
\frac{\LL \left( \Lbk \right)}{\Lbk} + \frac{1}{2} \,  \frac{\cb^2 \, \ee^2}{\cc^2} \, \frac{\Lk}{\Lbk} + i \, \frac{\mathcal{T} \left( \Lbk \right)}{\Lbk} 
+ \frac{1}{3} \, \frac{\cb \ee}{\cc} \, \frac{\KK \left( \Lb \left( \Lbk \right) \right)}{\Lbk^2} + \frac{1}{3} \,  \frac{\cb \, \ee}{\cc} \, 
\frac{\LL \left( \Lkb \right)}{\Lkb} + \frac{1}{2} \,  \frac{\cc^2 \, \eb^2}{\cb^2} \, \frac{\Lbkb}{\Lkb}
 - \frac{1}{3} \,  \frac{\cb \, \ee}{\cc} \, \frac{\Lb \left( \Lbk \right) \KK \left( \Lbk \right)}{\Lbk^3} + \frac{1}{18} \, \frac{\Lbkb \LL \left( \Lkb \right)}
{\Lkb^2} \, P - \frac{1}{3} \, \frac{\Lb \left( \Lbk \right) \LL \left( \Lkb \right)}{\Lbk \Lkb} - \frac{1}{18} \, 
\frac{\KK \left( \Lbk \right) \Lb \left( \Lbk \right)}{\Lbk^3} \, \ov{P} - \ee \, \eb + \frac{2}{9} \, P \, \ov{P} + \frac{4}{9} \, \frac{\Lb \left( \Lbk \right) \KK
\left( \Lb \left( \Lbk \right) \right)}{\Lbk^3} - \frac{1}{6} \, \frac{\KK \left( \Lb \left( \Lb \left( \Lbk \right) \right) \right)}{\Lbk^2} - \frac{1}{6} \, 
\frac{\Lb \left( \LL \left(\Lbk \right) \right)}{\Lbk} - \frac{1}{9} \, \frac{\Lk}{\Lbk} \, \ov{P}^2 - \frac{1}{6} \, \frac{\Lbkb \LL \left( \LL \left( \Lkb \right) 
\right)}{\Lkb^2} + \frac{1}{6} \, \frac{\Lbkb  \LL(P)}{\Lkb} +  \frac{1}{6} \, \frac{\Lk \Lb(\ov{P})}{\Lbk} -
\frac{4}{9} \, \frac{\KK \left( \Lbk \right) \Lb \left( \Lbk \right)^2}
{\Lbk^4} -  \frac{1}{9} \, \frac{\Lbkb P^2}{\Lkb} +  \frac{1}{18} \, \frac{\KK \left( \Lb \left( \Lbk \right) \right) \ov{P}}{\Lbk^2} +  \frac{2}{9} \, 
\frac{\Lbkb \LL \left( \Lkb \right)^2}{\Lkb^3} - \frac{1}{6} \, \frac{\Lk \Lb \left( \Lb \left( \Lbk \right) \right)}{\Lbk^2} + \frac{5}{18} \, 
\frac{\LL \left( \Lbk \right) \Lb \left( \Lbk \right)}{\Lbk^2} + \frac{2}{9} \, \frac{\LL(k) \Lb \left( \Lbk \right)^2}{\Lbk^3} + \frac{1}{6} \, \frac{\KK
\left( \Lbk \right) \Lb \left( \Lb \left( \Lbk \right) \right)}{\Lbk^3} - \frac{1}{9} \, \frac{\LL \left( \Lbk \right)}{\Lbk} \, \ov{P} + \frac{1}{9} \, \frac{\Lb 
\left( \Lbk \right)}{\Lbk} \, P + \frac{1}{18} \, \frac{\Lk \Lb \left( \Lbk \right) }{\Lbk^2} \, \ov{P} + \frac{\cc \eb}{\cb} \, \frac{\Lb \left( \Lbk \right)}{\Lbk}
- \frac{i}{3} \, \frac{\ee \cb}{\cc} \, \frac{\Tk}{\Lbk}
- \frac{i}{3} \, \frac{\Lb \left( \Tk \right)}{\Lbk}
- \frac{i}{9} \, \frac{\Lb\left( \Lbk \right)}{\Lbk^2} \, \Tk
+ \frac{4i}{9} \, \frac{\ov{P}}{\Lbk} \, \Tk.
\end{dmath*} 
As we shall check that this expression is real, we just drop the terms which come together with their conjugate counterpart, i.e., we perform a computation 
mod $\mathbb{R}$. We thus get:
\begin{dmath*}
i \, \cc \cb \,  W^{\zeta}_{\rho \zeta} \equiv - \frac{2}{3} \, \frac{\cb \, \ee}{\cc} \, 
\frac{\LL \left( \Lbk \right)}{\Lbk} + i \, \frac{\mathcal{T} \left( \Lbk \right)}{\Lbk}
+ \frac{1}{3} \, \frac{\cb \ee}{\cc} \, \frac{\KK \left( \Lb \left( \Lbk \right) \right)}{\Lbk^2}
+ \frac{1}{3} \,  \frac{\cb \, \ee}{\cc} \, \frac{\LL \left( \Lkb \right)}{\Lkb} 
- \frac{1}{3} \,  \frac{\cb \, \ee}{\cc} \, \frac{\Lb \left( \Lbk \right) \KK \left( \Lbk \right)}{\Lbk^3}  - \frac{1}{18} \, 
\frac{\KK \left( \Lbk \right) \Lb \left( \Lbk \right)}{\Lbk^3} \, \ov{P} + \frac{4}{9} \, \frac{\Lb \left( \Lbk \right) \KK
\left( \Lb \left( \Lbk \right) \right)}{\Lbk^3} - \frac{1}{6} \, \frac{\KK \left( \Lb \left( \Lb \left( \Lbk \right) \right) \right)}{\Lbk^2} 
  -\frac{4}{9} \, \frac{\KK \left( \Lbk \right) \Lb \left( \Lbk \right)^2}{\Lbk^4} 
+ \frac{1}{18} \, \frac{\KK \left( \Lb \left( \Lbk \right) \right) \ov{P}}{\Lbk^2}  + \frac{5}{18} \, 
\frac{\LL \left( \Lbk \right) \Lb \left( \Lbk \right)}{\Lbk^2}+ \frac{1}{6} \, \frac{\KK
\left( \Lbk \right) \Lb \left( \Lb \left( \Lbk \right) \right)}{\Lbk^3} - \frac{1}{9} \, \frac{\LL \left( \Lbk \right)}{\Lbk} \, \ov{P} + \frac{1}{9} \, \frac{\Lb 
\left( \Lbk \right)}{\Lbk} \, P + \frac{\cc \eb}{\cb} \, \frac{\Lb \left( \Lbk \right)}{\Lbk}
- \frac{i}{3} \, \frac{\ee \cb}{\cc} \, \frac{\Tk}{\Lbk}
- \frac{i}{3} \, \frac{\Lb \left( \Tk \right)}{\Lbk}
- \frac{i}{9} \, \frac{\Lb\left( \Lbk \right)}{\Lbk^2} \, \Tk
+ \frac{4i}{9} \, \frac{\ov{P}}{\Lbk} \, \Tk.
\end{dmath*}
We now give an expression of $i \, \cc \cb \,  W^{\zeta}_{\rho \zeta}$ in terms of the function $W$ and its derivative by $\Lb$.
Using the expression of $W$ given by $(\ref{eq:W})$ and dropping 
once again the terms which come with their conjugate counterpart, we get the formula:
\begin{equation*}
i \, \cc \cb \,  W^{\zeta}_{\rho \zeta} \equiv \frac{1}{6} \left(
\frac{\Lb \left( \Lbk \right)}{\Lbk} - \ov{P} \right) W
+ \frac{1}{2} \, \Lb(W) - \frac{\ee \cb}{\cc} \, W,
\end{equation*}
from which we get that  $W^{\zeta}_{\rho \zeta}$ is purely imaginary under that assumption that $W$ does vanish identically on $M$.

The normalization step of Cartan's algorithm stops here and we shall now perform a prolongation of the problem.
We introduce the modified Maurer Cartan forms of the group $G_4$, namely:
\begin{equation*}
\left\{
\begin{aligned}
& \hat{\delta}^1 : = \delta^1 -  w^1_{\rho} \, \rho - w^1_{\kappa} \, \kappa - w^1_{\zeta} \, \zeta - w^1_{\ov{\kappa}} \, \ov{\kappa} - w^1_{\ov{\zeta}} \, \ov{\zeta} \\
& \hat{\delta}^2 : =  \delta^2 - w^2_{\rho} \, \rho - w^2_{\kappa} \, \kappa - w^2_{\zeta} \, \zeta - w^2_{\ov{\kappa}} \, \ov{\kappa} - w^2_{\ov{\zeta}} \, \ov{\zeta} 
\end{aligned}
\right.
\end{equation*}
where $w^i_{\rho}$, $w^i_{\kappa}$, $w^i_{\zeta}$, $w^i_{\ov{\kappa}}$, $w^i_{\ov{\zeta}}$, $i = 1 , \,  2 $,  are the solutions 
of the system of equations (\ref{absorption}) corresponding to 
$w^1_{\rho} + \ov{w^1_{\rho}} = 0,$ that is:
\begin{equation*}
\left\{
\begin{aligned}
& \hat{\delta}^1 : = \delta^1 + \frac{1}{2} \, V^{\zeta}_{\rho \zeta} \, \rho 
- \ov{V^{\rho}_{\rho \kappa}} \, \kappa - V^{\rho}_{\rho \zeta} \, \zeta - V^{\kappa}_{\kappa \ov{\kappa}} \, \ov{\kappa} \\
& \hat{\delta}^2 : =  \delta^2 - V^{\zeta}_{\rho \kappa} \, \rho - \left( V^{\kappa}_{\rho \kappa} - \frac{1}{2}  V^{\zeta}_{\rho \zeta} \right) \kappa 
- V^{\kappa}_{\rho \zeta} \, \zeta - V^{\kappa}_{\rho \ov{\kappa}} \, \ov{\kappa} - V^{\kappa}_{\rho \ov{\zeta}} \, \ov{\zeta} 
.\end{aligned}
\right.
\end{equation*}
We also introduce the modified Maurer Cartan forms which correspond to solutions of the system (\ref{absorption}) when ${\sf Re}( w^1_{\rho})$ 
is not necessarily set to zero, namely:
\begin{equation*}
\left\{
\begin{aligned}
& \pi^1 : =\hat{ \delta}^1 - {\sf Re}(w^1_{\rho}) \, \rho  \\
& \pi^2 : =  \hat{\delta}^2 - {\sf Re}(w^1_{\rho}) \, \kappa.
\end{aligned}
\right.
\end{equation*}
Let $P^9$ be the nine dimensionnal $G_4$-structure constituted  by the set of all coframes of the form $(\rho, \kappa, \zeta, \ov{\kappa}, \ov{\zeta})$ on $M$.
The initial coframe $(\rho_0, \kappa_0, \zeta_0, \ov{\kappa}_0, \ov{\zeta}_0)$ gives a natural trivialisation $P^9 \stackrel{p} \longrightarrow M \times G_{4}$
which allows us to consider any differential form on $M$ or $G^4$ as a differential form on $P^9$. If $\omega$ is a differential form on $M$ for example, we just 
consider $p^*( pr_1^*(\omega))$,  where $pr_1$ is the projection on the first component $M \times G_4 \stackrel{pr_1} \longrightarrow M$. 
We still denote this form by $\omega$ in the sequel.
Following \cite{Olver-1995}, we introduce the two coframes $(\rho, \kappa, \zeta, \ov{\kappa}, \ov{\zeta}, \delta^1, \delta^2, \ov{\delta^1}, \ov{\delta^2})$ 
and $(\rho, \kappa, \zeta, \ov{\kappa}, \ov{\zeta}, \pi^1, \pi^2, \ov{\pi^1}, \ov{\pi^2})$ on $P^9$. Setting $ t:= - {\sf Re}(w^1_{\rho})$, they relate to each other
by the relation:
\begin{equation*}
\begin{pmatrix}
\rho \\
\kappa \\
\zeta \\
\ov{\kappa} \\
\ov{\zeta} \\
\pi^1 \\
\pi^2 \\
\ov{\pi^1} \\
\ov{\pi^2} \\
\end{pmatrix}
= g_{t}
\cdot
\begin{pmatrix}
\rho \\
\kappa \\
\zeta \\
\ov{\kappa} \\
\ov{\zeta} \\
\delta^1 \\
\delta^2 \\
\ov{\delta^1} \\
\ov{\delta^2} \\
\end{pmatrix}
\end{equation*}
where $g_t$ is defined by 
\begin{equation*}
g_t:=\begin{pmatrix}
1 & 0 & 0 & 0 & 0 & 0 & 0 & 0 & 0 \\
0 & 1 & 0 & 0 & 0 & 0 & 0 & 0 & 0 \\
0 & 0 & 1 & 0 & 0 & 0 & 0 & 0 & 0 \\
0 & 0 & 0 & 1 & 0 & 0 & 0 & 0 & 0\\
0 & 0 & 0 & 0 & 1 & 0 & 0 & 0 & 0\\
t & 0 & 0 & 0 & 0 & 1 & 0 & 0 & 0 \\
0 & t & 0 & 0 & 0 & 0 & 1 & 0 & 0 \\
t & 0 & 0 & 0 & 0 & 0 & 0 & 1 & 0 \\
0 & 0 & 0 & t & 0 & 0 & 0 & 0 & 1 \\
\end{pmatrix}
.
\end{equation*}
The set $\left \{ g_t, t \in \mathbb{R} \right\}$ defines a one dimensional Lie group $G_{prol}$, whose Maurer Cartan form is given by $d t$.
We now start the absorption-normalization procedure in Cartan's method on $P^9$. 

From the definition of $\pi^1$ and $\pi^2$ as the solutions of the absorption equations $(\ref{absorption})$, the five first structure equations read
as 
\begin{equation}
\label{steq}
\begin{aligned}
d \rho & = \pi^1 \wedge \rho + \ov{\pi^1} \wedge \rho + i \, \kappa \wedge \ov{\kappa}, \\
d \kappa & = \pi^1 \wedge \kappa + \pi^2 \wedge \rho + \zeta \wedge \ov{\kappa}, \\
d \zeta & = i \, \pi^2 \wedge \kappa + \pi^1 \wedge \zeta - \ov{\pi^1} \wedge \zeta,  \\
d \ov{\kappa} & = \ov{\pi^1} \wedge \ov{\kappa} + \ov{\pi^2} \wedge \rho - \kappa \wedge \ov{\zeta}, \\
d \ov{\zeta} & = - i \, \ov{\pi^2} \wedge \ov{\kappa} + \ov{\pi^1} \wedge \ov{\zeta} - {\pi^1} \wedge \ov{\zeta}. \\
\end{aligned}
\end{equation}

The computations that follow aim to determine the expressions of $d \pi^1$ and $d \pi^2$. Both of these expressions can be deduced from the 
the set of equations $(\ref{steq})$.
For example, taking the exterior derivative of both sides of the equation giving $d \rho$, we get:
\begin{equation*}
0 = d \pi^1 \wedge \rho - \pi^1 \wedge d \rho + d \ov{\pi^1} \wedge \rho - \ov{\pi^1} \wedge d \rho + i \, d \kappa \wedge \ov{\kappa} 
- i \, \kappa \wedge d \ov{\kappa}
.\end{equation*}
Replacing each two-form $d\rho$, $d \kappa$ and $d \ov{\kappa}$ by its expression given by (\ref{steq}) yields:
\begin{dmath*}
0 = d \pi^1 \wedge \rho  + d \ov{\pi^1} \wedge \rho - \pi^1 \wedge \left( \pi^1 \wedge \rho + \ov{\pi^1} \wedge \rho + i \, \kappa \wedge \ov{\kappa} \right) \\
- \ov{\pi^1} \wedge \left( \pi^1 \wedge \rho + \ov{\pi^1} \wedge \rho + i \, \kappa \wedge \ov{\kappa} \right) + i \, \left(\pi^1 \wedge \kappa + \pi^2 \wedge 
\rho + \zeta \wedge \ov{\kappa} \right) \wedge \ov{\kappa}  \\ 
-i \, \kappa \wedge \left( \ov{\pi^1} \wedge \ov{\kappa} + \ov{\pi^2} \wedge \rho - \kappa \wedge \ov{\zeta}  \right),
\end{dmath*}
which can be simplified as:
\begin{equation*}
0 = \left( d \pi^1  - i \, \kappa \wedge \ov{\pi^2} +  \ov{d \pi^1} +  i \, \ov{\kappa} \wedge \pi^2 \right) \wedge \rho.
\end{equation*}

Performing the same computation from the equation giving $d \kappa$, we get:
\begin{equation*}
0 = d \pi^1 \wedge \kappa -  \pi^1 \wedge d \kappa + d \pi^2 \wedge  \rho - \pi^2 \wedge d \rho + d \zeta \wedge \ov{\kappa}
-  \zeta \wedge d \ov{\kappa},
\end{equation*}
that is:
\begin{dmath*}
0 = d \pi^1 \wedge \kappa -  \pi^1 \wedge \left(  \pi^1 \wedge \kappa + \pi^2 \wedge \rho + \zeta \wedge \ov{\kappa} \right) 
+ d \pi^2 \wedge  \rho - \pi^2 \wedge \left( \pi^1 \wedge \rho + \ov{\pi^1} \wedge \rho + i \, \kappa \wedge \ov{\kappa} \right) 
+ \left( i \, \pi^2 \wedge \kappa + \pi^1 \wedge \zeta - \ov{\pi^1}  \wedge \zeta \right) \wedge \ov{\kappa}
-  \zeta \wedge \left(\ov{\pi^1} \wedge \ov{\kappa} + \ov{\pi^2} \wedge \rho - \kappa \wedge \ov{\zeta} \right),
\end{dmath*}
which yields:
\begin{dmath*}
 0 = \left(d \pi^1 - \zeta \wedge \ov{\zeta} \right) \wedge \kappa + \left( d \pi^2 - \pi^2 \wedge \ov{\pi^1} 
- \zeta \wedge \ov{\pi^2} \right) \wedge \rho.
\end{dmath*}

On the other hand, the same computation with the equation giving $d \zeta$ leads to 
\begin{dmath*}
0 = i \, d\pi^2 \wedge \kappa  - i \, \pi^2 \wedge \left(  \pi^1 \wedge \kappa + \pi^2 \wedge \rho + \zeta \wedge \ov{\kappa}  \right)+ d \pi^1 \wedge \zeta 
- d \ov{\pi^1} \wedge \zeta +  \left( \ov{\pi^1} - \pi^1  \right) \wedge \left( i \, \pi^2 \wedge \kappa + \pi^1 \wedge \zeta - \ov{\pi^1} \wedge \zeta \right), 
\end{dmath*}
that is:
\begin{equation*}
0 = \left( d \pi^1 - d \ov{\pi^1} - i \, \ov{\kappa} \wedge \pi^2  \right) \wedge \zeta +
 i \, \left( d \pi^2 - \pi^2 \wedge  \ov{\pi^1} \right) \wedge \kappa.
\end{equation*}
Let us introduce the two-forms $\Omega_1$ and $\Omega_2$ defined by
\begin{equation*}
\Omega_1 := d \pi^1 - i \, \kappa \wedge \ov{\pi^2} - \zeta \wedge \ov{\zeta},
\end{equation*}
and
\begin{equation*}
\Omega_2 := d \pi^2 - \pi^2 \wedge \ov{\pi^1} - \zeta \wedge \ov{\pi^2}.
\end{equation*}
With these notations, the three equations that we have obtained so far rewrite:
\begin{equation} \label{eq:intermediate}
\left\{\begin{aligned}
0 & = \left(\Omega_1 + \ov{\Omega_1}  \right) \wedge \rho, \\
0 & =\Omega_1 \wedge \kappa + \Omega_2 \wedge \rho, \\
0 & = \left( \Omega_1 - \ov{\Omega_1} \right) \wedge \zeta + i \, \Omega_2 \wedge \kappa.
\end{aligned}
\right.
\end{equation}

Taking the exterior product with $\kappa$ in the second equation gives:
\begin{equation*}
0 = \Omega_2 \wedge \rho \wedge \kappa,
\end{equation*}
from which we can deduce the two relations:
\begin{equation*}
\begin{aligned}
0 &= \left(\Omega_1 + \ov{\Omega_1}  \right) \wedge \rho \wedge \zeta, \\
0 & = \left( \Omega_1 - \ov{\Omega_1} \right) \wedge \rho \wedge \zeta,
\end{aligned}
\end{equation*}
which yields:
\begin{equation*}
\Omega_1  \wedge \rho \wedge \zeta = 0.
\end{equation*}
This implies the existence of two $1$-forms $\alpha$ and $\beta$ such that:
\begin{equation*}
\Omega_1 = \alpha \wedge \rho + \beta \wedge \zeta.
\end{equation*}

Similarly, there exist two $1$-form $\gamma$ and $\delta$ such that:
\begin{equation*}
\Omega_2 = \gamma \wedge \rho + \delta \wedge \kappa.
\end{equation*}
Inserting these two expressions in the second equation of $(\ref{eq:intermediate})$,
we obtain the existence of a real $1$-form $\Lambda$ such that:
\begin{equation*}
\begin{aligned}
\Omega_1 & = \Lambda \wedge \rho, \\
\Omega_2 & = \Lambda \wedge \kappa.
\end{aligned}
\end{equation*}
If we come back to the expression of $d \pi^1$ and $d \pi^2$, we get the two following additional structure equations:
\begin{equation*}
\begin{aligned}
d \pi^1  & = i \, \kappa \wedge \ov{\pi^2} + \zeta \wedge \ov{\zeta} + \Lambda \wedge \rho, \\
d \pi^2  & = \pi^2 \wedge \ov{\pi^1} + \zeta \wedge \ov{\pi^2} + \Lambda \wedge \kappa. 
\end{aligned}
\end{equation*}
From the definition of $\pi^1$ and $\pi^2$, $\Lambda$ shall involve a term in $dt$. 
By adding $\Lambda$ to the set of $1$-forms $\rho$, $\kappa$, $\zeta$, $\ov{\kappa}$, $\ov{\zeta}$, $\pi^1$, $\pi^2$, $\ov{\pi^1}$, 
$\ov{\pi^2}$, we thus get a $10$-dimensional $\{ e\}$-structure on $G_{prol} \times P^9$, which constitutes the second (and last) $1$-dimensional prolongation to the
equivalence problem. It remains to compute the exterior derivative of $\Lambda$, which is done in what follows.

Taking the exterior derivative of the equation giving $d \pi^1$, we get:
\begin{equation*}
0 = i \, d \kappa \wedge \ov{\pi^2} - i \, \kappa \wedge \ov{ d \pi^2} + d \zeta \wedge \ov{\zeta} - \zeta \wedge d \ov{\zeta} 
+ d \Lambda \wedge \rho - \Lambda \wedge d \rho,
\end{equation*}
that is
\begin{dmath*}
0 = i \left( \pi^1 \wedge \kappa + \pi^2 \wedge \rho + \zeta \wedge  \ov{\kappa} \right) \wedge \ov{\pi^2}
- i \, \kappa \wedge \left( \ov{\pi^2} \wedge \pi^1 + \ov{\zeta}\wedge \pi^2 + \Lambda \wedge \ov{\kappa} \right)
+ \left( i \, \pi^2 \wedge \kappa + \pi^1 \wedge \zeta - \ov{\pi^1} \wedge \zeta \right) \wedge \ov{\zeta}
- \zeta \wedge \left( -i \, \ov{\pi^2} \wedge \ov{\kappa} + \ov{\pi^1} \wedge \ov{\zeta} - \pi^1 \wedge \ov{\zeta} \right) 
+ d \Lambda \wedge \rho - \Lambda \wedge \left( \pi^1 \wedge \rho + \ov{\pi^1} \wedge \rho + i \, \kappa \wedge \ov{\kappa} \right),
\end{dmath*}
which yields:
\begin{equation*}
0 = \left( d \Lambda - \Lambda \wedge \pi ^1 - \Lambda \wedge \ov{\pi^1} - i \, \pi^2 \wedge \ov{\pi^2} \right) \wedge \rho
.\end{equation*}

On the other hand, a similar computation starting from the expression of $d \pi^2$ gives:
\begin{dmath*}
0 = d \pi^2 \wedge \ov{\pi^1} - \pi^2 \wedge \ov{d \pi^1} + d \zeta \wedge \ov{\pi^2} - \zeta \wedge d \ov{\pi^2} + d \Lambda \wedge \kappa -
\Lambda \wedge d \kappa,
\end{dmath*}
that is
\begin{dmath*}
\left( \pi^2 \wedge \ov{\pi^1} + \zeta \wedge \ov{\pi^2} + \Lambda \wedge \kappa \right) \wedge \ov{\pi^1} - \pi^2 \wedge \left( - \ov{\kappa} \wedge \pi^2 +
\ov{\zeta}\wedge \zeta + \Lambda \wedge \rho \right) + \left( i \, \pi^2 \wedge \kappa + \pi^1 \wedge \zeta - \ov{\pi^1}\wedge \zeta \right) \wedge \ov{\pi^2}
- \zeta \wedge \left( \ov{\pi^2} \wedge \pi^1 + \ov{\zeta}\wedge \pi^2 + \Lambda \wedge \ov{\kappa} \right) + d \Lambda \wedge \kappa - 
\Lambda \wedge \left( \pi^1 \wedge \kappa + \pi^2 \wedge \rho + \zeta \wedge \ov{\kappa} \right),
\end{dmath*}
or
\begin{equation*}
\left( d \Lambda - i \, \pi^2 \wedge \ov{\pi^2} - \Lambda\wedge \pi^1 - \Lambda \wedge \ov{\pi^1} \right) \wedge \kappa = 0.  
\end{equation*}
From these last two equations, we deduce that:
\begin{equation*}
d \Lambda =  i \, \pi^2 \wedge \ov{\pi^2} + \Lambda \wedge \pi^1 + \Lambda \wedge \ov{\pi^1}.
\end{equation*}

Summing up the results that we have obtained so far,
the ten $1$-differential forms  $\rho$, $\kappa$, $\zeta$, $\ov{\kappa}$, $\ov{\zeta}$, $\pi^1$, $\pi^2$, $\ov{\pi^1}$, 
$\ov{\pi^2}$, $\Lambda$, satisfy the structure equations:

\begin{equation*}
\begin{aligned}
d \rho & = \pi^1 \wedge \rho + \ov{\pi^1} \wedge \rho + i \, \kappa \wedge \ov{\kappa}, \\
d \kappa & = \pi^1 \wedge \kappa + \pi^2 \wedge \rho + \zeta \wedge \ov{\kappa}, \\
d \zeta & = i \, \pi^2 \wedge \kappa + \pi^1 \wedge \zeta - \ov{\pi^1} \wedge \zeta,  \\
d \ov{\kappa} & = \ov{\pi^1} \wedge \ov{\kappa} + \ov{\pi^2} \wedge \rho - \kappa \wedge \ov{\zeta}, \\
d \ov{\zeta} & = - i \, \ov{\pi^2} \wedge \ov{\kappa} + \ov{\pi^1} \wedge \ov{\zeta} - {\pi^1} \wedge \ov{\zeta}, \\
d \pi^1  & = i \, \kappa \wedge \ov{\pi^2} + \zeta \wedge \ov{\zeta} + \Lambda \wedge \rho, \\
d \pi^2  & = \pi^2 \wedge \ov{\pi^1} + \zeta \wedge \ov{\pi^2} + \Lambda \wedge \kappa, \\ 
d \Lambda & =  i \, \pi^2 \wedge \ov{\pi^2} + \Lambda \wedge \pi^1 + \Lambda \wedge \ov{\pi^1}.
\end{aligned}
\end{equation*}

The torsion coefficients of these structure equations are all constant, and they do not depend on the graphing function $F$ of $M$.
This proves that all the hypersurfaces $M$ which satisfy
\begin{equation*}
J = W = 0
\end{equation*}
are locally biholomorphic.
A direct computation shows that the hypersurface defined by
\begin{equation*}
u = \frac{z_1 \ov{z_1} + \frac{1}{2} z_1^2 \ov{z_2} + \frac{1}{2} \ov{z_1^2} z_2}{1 - z_2 \ov{z_2}}
\end{equation*}
is precisely such that $J= W=0$. 
This completes the proof of theorem $\ref{thm:pocchiola}$.

\end{document}